\newtheorem{thm}{Theorem}[section]
\newtheorem{lem}[thm]{Lemma}
\newtheorem{cor}[thm]{Corollary}
\theoremstyle{definition} 
\newtheorem{dfn}[thm]{Definition}
\newtheorem{example}[thm]{Example}
\newtheorem{exa}[thm]{Example}
\newtheorem{rem}[thm]{Remark}
\newtheorem{nota}[thm]{Notation}
\definecolor{Pink}{RGB}{230 56 243}
\definecolor{Blue}{RGB}{0 19 147}
\definecolor{Green}{RGB}{66 147 41}
\definecolor{Grey}{RGB}{102 102 102}
\definecolor{Orange}{RGB}{237 107 45}
\definecolor{Red}{RGB}{234 53 159}
\def\mythick{0.5}
\tikzset{
         DWs/.style={circle,draw=black,circle,fill=white,inner sep=0pt, minimum size=3pt},
W/.style={circle,draw=black,circle,fill=white,inner sep=0pt, minimum size=4pt},
B/.style={circle,draw=black!80!white,circle,fill=black!80!white,inner sep=0pt, outer sep=4pt, minimum size=3pt},
Bs/.style={circle,draw=black!80!white,circle,fill=black!80!white,inner sep=0pt, outer sep=2pt, minimum size=3pt},
BL/.style={circle,draw=blue!60!white,circle,fill=blue!60!white,inner sep=0pt, minimum size=4pt},
R/.style={circle,draw=red!60!white,circle,fill=red!60!white,inner sep=0pt, minimum size=4pt},  
G/.style={circle,draw=green!65!black,circle,fill=green!65!black,inner sep=0pt, minimum size=4pt},     
Rs/.style={circle,draw=red!60!white,circle,fill=red!60!white,inner sep=0pt, minimum size=2pt}, 
BLs/.style={circle,draw=blue!60!white,circle,fill=blue!60!white,inner sep=0pt, minimum size=2pt},
Gs/.style={circle,draw=green!65!black,circle,fill=green!65!black,inner sep=0pt, minimum size=2pt},  }
\newcommand{\marginparstretch}{0.6}
\let\oldmarginpar\marginpar
\renewcommand\marginpar[1]{\-\oldmarginpar[\framebox{\setstretch{\marginparstretch}\begin{minipage}{\marginparwidth}{\raggedleft\tiny #1}\end{minipage}}]{\framebox{\setstretch{\marginparstretch}\begin{minipage}{\marginparwidth}{\raggedright\tiny #1}\end{minipage}}}}
\setlist[enumerate]{format=\normalfont}
\numberwithin{equation}{section}
\newcommand{\ii}{\kern 1pt {\rm i}\kern 1pt }
\newcommand{\Spec}{\operatorname{Spec}}
\newcommand{\Hom}{\operatorname{Hom}}
\newcommand{\Ext}{\operatorname{Ext}}
\newcommand{\RHom}{\operatorname{{\mathbf R}Hom}}
\newcommand{\End}{\operatorname{End}}
\newcommand{\Auteq}{\operatorname{Auteq}}
\newcommand{\coh}{\operatorname{coh}}
\newcommand{\fmod}{\operatorname{mod}}
\newcommand{\proj}{\operatorname{proj}}
\newcommand{\Kb}{\operatorname{K^b}}
\newcommand{\CM}{\operatorname{CM}}
\newcommand\Db{\mathop{\rm{D}^b}}
\newcommand{\LatticePoint}[1][]{%
\begin{tikzpicture}
\filldraw (0,0) circle (2pt);
\end{tikzpicture}
}
\newcommand{\Stab}{\operatorname{Stab}}
\newcommand{\cStab}[1]{\mathrm{Stab}_{#1}^{\kern -0.5pt \circ}\kern -0.2pt}
\newcommand{\nStab}[1]{\mathrm{Stab}_{#1}\kern -0.1pt}
\newcommand{\TitsR}{{\sf Tits}_{\kern 1pt \bR}}
\newcommand{\CTitsR}{{\sf CTits}_{\kern 1pt \bR}}
\newcommand{\CTitsC}{{\sf CTits}_{\kern 1pt \bC}}
\newcommand{\cAut}[1]{\mathrm{Aut}_{#1}^{\kern -0.5pt \circ}\kern -0.2pt}
\def\Br{\mathop{\sf Br}\nolimits}
\def\PBr{\mathop{\sf PBr}\nolimits}
\newcommand{\bC}{\mathbb{C}}
\newcommand{\bR}{\mathbb{R}}
\newcommand{\Flop}{\operatorname{\sf Flop}}
\newcommand{\scrA}{\EuScript{A}}
\newcommand{\scrC}{\EuScript{C}}
\newcommand{\scrG}{\EuScript{G}}
\newcommand{\scrH}{\EuScript{H}}
\newcommand{\scrI}{\EuScript{I}}
\newcommand{\scrJ}{\EuScript{J}}
\newcommand{\scrK}{\EuScript{K}}
\newcommand{\scrN}{\EuScript{N}}
\newcommand{\scrO}{\EuScript{O}}
\newcommand{\scrR}{\EuScript{R}}
\newcommand{\scrS}{\EuScript{S}}
\newcommand{\scrT}{\EuScript{T}}
\newcommand{\scrU}{\EuScript{U}}
\newcommand{\scrV}{\EuScript{V}}
\newcommand{\scrW}{\EuScript{W}}
\newcommand{\scrX}{\EuScript{X}}
\newcommand{\scrY}{\EuScript{Y}}
\newcommand{\scrZ}{\EuScript{Z}}
\newcommand{\con}{\operatorname{con}}
\newcommand{\scrIc}{\scrI^{\kern 0.5pt\mathrm{c}}}
\newcommand{\Delt}{\Updelta_{\kern 0.05em 0}}
\newcommand{\DeltAff}{\Updelta_{\kern 0.05em 0}^{\aff}}
\newcommand{\WDelt}{W_{\kern -0.1em \Updelta}}
\newcommand{\WDeltaff}{W_{\kern -0.1em \Updelta_{\aff}}}
\newcommand{\Wkern}[1]{W_{\kern -0.1em #1}\kern 0.05em}
\newcommand{\wo}[1]{w_{\kern -0.075em #1}}
\newcommand{\wop}[1]{w^{\phantom J}_{\kern -0.1em #1}}
\newcommand{\GammaJ}{\Upgamma_{\kern -0.05em J}}
\newcommand{\GammaS}{\Upgamma_{\kern -0.05em \scrJ}}
\newcommand{\xlup}[1]{{}^{#1}\kern -0.15em x}
\newcommand{\xlupmax}[1]{{}^{#1}\kern -0.25em x}
\newcommand{\iDelta}{\iota_{\kern -0.075em \Updelta}}
\newcommand{\Phisub}[1]{\Phi_{\kern -0.1em #1}}
\newcommand{\aff}{\operatorname{\mathsf{aff}}\nolimits}
\newcommand\Curve{\mathrm{C}}
\def\Id{\mathop{\rm{Id}}\nolimits}
\newcommand\chamC{\mathsf{C}}
\newcommand\chamD{\mathsf{D}}
\newcommand\chamE{\mathsf{E}}
\newcommand\stautilt{\operatorname{\mathsf{s}\uptau\mathsf{\textsf{-}tilt}}}
\newcommand{\smc}{\operatorname{\mathsf{smc}}}
\newcommand{\dsmc}{\operatorname{\mathsf{\textsf{-}smc}}}
\newcommand{\sbrick}{\operatorname{\mathsf{sbrick}}}
\newcommand{\silt}{\operatorname{\mathsf{\textsf{-}silt}}}
\newcommand{\Hyp}{\mathsf{H}}
\definecolor{Pink}{RGB}{230 56 243}
\tikzset{
W/.style={circle,draw=black,circle,fill=white,inner sep=0pt, minimum size=4pt},
B/.style={circle,draw=black!80!white,circle,fill=black!80!white,inner sep=0pt, minimum size=4pt},
Or/.style={circle,draw=Orange,circle,fill=Orange,inner sep=0pt, minimum size=4pt},
P/.style={circle,draw=Pink,circle,fill=Pink,inner sep=0pt, minimum size=4pt},
R/.style={circle,draw=black!80!white,circle,fill=red!80!white,inner sep=0pt, minimum size=4pt},  
}
\newcommand{\Dfive}[5]{%
\begin{tikzpicture}[scale=0.21]
\node at (1,0) [#1] {};
\node at (2,0) [#2] {};
\node at (2,1) [#3] {};
\node at (3,0) [#4] {};
\node at (4,0) [#5] {};
\end{tikzpicture}
}
\newcommand{\Dfour}[5]{%
\begin{tikzpicture}[scale=0.21]
\node at (1,0) [#1] {};
\node at (2,0) [#2] {};
\node at (2,1) [#3] {};
\node at (3,0) [#4] {};
\end{tikzpicture}
}
\newcolumntype{C}[1]{>{\centering\let\newline\\\arraybackslash\hspace{0pt}}m{#1}}
\renewcommand{\H}{\mathrm{H}}
\begin{document}

\title{Spherical Objects in Dimensions Two and Three}

\author{Wahei Hara}
\address{W.~Hara,  The Mathematics and Statistics Building, University of Glasgow, University Place, Glasgow, G12 8QQ, UK.}\email{wahei.hara@glasgow.ac.uk}
\author{Michael Wemyss}
\address{M.~Wemyss, The Mathematics and Statistics Building, University of Glasgow, University Place, Glasgow, G12 8QQ, UK.}
\email{michael.wemyss@glasgow.ac.uk}

\begin{abstract} 
This paper classifies spherical objects in various geometric settings in dimension two and three, including both minimal and partial crepant resolutions of Kleinian singularities, as well as arbitrary flopping $3$-fold contractions with only Gorenstein terminal singularities.  The main result is much more general: in each such setting, we prove that all objects in the associated null category $\scrC$ with no negative Ext groups are the image, under the action of an appropriate braid or pure braid group, of some object in the heart of a bounded t-structure.  The corollary is that all objects $x$ which admit no negative Exts, and for which $\Hom_\scrC(x,x)=\mathbb{C}$, are the images of the simples.  A variation on this argument goes further, and classifies all bounded t-structures on~$\scrC$.  There are multiple geometric, topological and algebraic consequences, primarily to autoequivalences and stability conditions. Our main new technique also extends into representation theory, and we establish that in the derived category of a finite dimensional algebra which is silting discrete, every object with no negative Ext groups lies in the heart of a bounded t-structure.  As a consequence, every semibrick complex can be completed to a simple minded collection.
\end{abstract}
\thanks{The authors were supported by EPSRC grant EP/R034826/1, and MW additionally by ERC Consolidator Grant 101001227 (MMiMMa). }
\maketitle{}

\section{Introduction}

In reasonable categories, an object is \emph{spherical} if its self-extension groups behave like the cohomology of a sphere.  These objects have attracted interest across symplectic geometry, algebraic geometry and representation theory, since  each spherical object generates a symmetry in the form of a twist  autoequivalence \cite{ST}. It turns out that these control much of the structure of autoequivalence groups, at least in small dimension.  

The question of classifying spherical objects has been approached by various authors \cite{AS, IU, IUU, ST, BDL, SW}, not least because of the resulting topological and geometric consequences. The viewpoint of this paper is that any such classification \emph{must} necessarily be the consequence of something more general. Spherical objects are not strictly speaking the correct objects in either dimension two \cite[\S10]{IW9} or dimension three \cite{Toda, DW1}, and furthermore birational geometry requires us to work with singular varieties, for which the self-extension groups are usually unbounded.  

Our main new insight is that a more general classification is indeed possible, and we identify the homological condition for which such a result exists.  In the happy situation when spherical objects exist, and are relevant, their classification is a consequence.

\subsection{Motivation and Setting}
This paper is concerned mainly with the following three geometric settings, but later in \S\ref{sec: intro gen} it also works much more generally.

\begin{enumerate}
\item (surfaces) The minimal resolution $\scrZ\to\mathbb{C}^2/\Gamma$ of a Kleinian singularity.  
\item (surfaces) A crepant partial resolution $\scrY\to\mathbb{C}^2/\Gamma$ of a Kleinian singularity.
\item ($3$-folds) A $3$-fold flopping contraction $\scrX\to \Spec \scrR$, where  $\scrX$ has at worst Gorenstein terminal singularities, e.g.\ $\scrX$ is smooth, and $\scrR$ is complete local.
\end{enumerate}
The case when $\scrZ\to\mathbb{C}^2/\Gamma$ is the minimal resolution has attracted the most attention \cite{IU, BDL}. That situation is unnaturally easy for two separate reasons.  First, there are very few examples: just two infinite families and three sporadic cases.  Second, the category $\scrC$ below is then controlled by an intrinsically formal DG-algebra \cite{ST}, which makes many computations easier, and in particular shows that $\scrC$ is equivalent to many other categories in the literature.  In the other geometric settings we should expect neither, as the controlling DG-algebra is usually unbounded, and is far from formal.

Nonetheless, for any $X=\scrX, \scrY$ or $\scrZ$ above, consider 
\begin{align}
\scrC&\colonequals \{ x\in\Db(\coh X)\mid \mathbf{R
}f_*x=0\} \subset \Db(\coh X).\label{eqn: define C intro}
\end{align}
The category $\scrC$ has finite dimensional Hom spaces, and further is $d$-CY, where $d=\dim X$, if $X$ is smooth.  Our interest will be in spherical objects, namely those $x\in\scrC$ for which
\[
\Hom_\scrC(x,x[i])=
\begin{cases}
\mathbb{C}&\mbox{if }i=0,d\\
0&\mbox{else.}
\end{cases}
\]
In the case of the minimal resolution $\scrZ\to\mathbb{C}^2/\Gamma$, and also for some very special smooth flopping contractions, these objects are intimately related to generation questions regarding the autoequivalence group of $\scrC$, and to the existence of Lagrangians in plumbings of spheres.

\medskip
However, in general, autoequivalences are controlled by more complicated objects \cite{Toda, DW1, Booth}, and what makes these objects interesting is that they are \emph{not} determined by their self-extension groups, even in the lucky case when those groups are bounded.  This fact has, until now, hampered any attempt to classify them, since it is not even clear what we are supposed to classify.

\medskip
The main results below show that, rather remarkably, the only conditions that matter are that $\Hom_{\scrC}( x, x[i])=0$ for all $i<0$, and $\Hom_{\scrC}( x, x)=\mathbb{C}$. These would seem to be rather weak conditions, however we will prove that these objects are precisely the orbits of the simples under the natural action of the mutation functors, even in very singular settings.  In particular, there are actually very few of them.   

In fact, and much more generally, it is possible to completely characterise those objects which satisfy only the condition $\Hom_{\scrC}( x, x[i])=0$ for all $i<0$.  These, it turns out, are precisely the objects in the orbits of the standard t-structure under the action of the mutation functors.  Furthermore, all bounded t-structures on $\scrC$ can be similarly classified.  From these results, and others, we extract geometric, topological and algebraic corollaries.

\subsection{Main Results}
For partial resolutions $\scrY$, or for $3$-fold flopping contractions $\scrX$, it is the lack of uniqueness that makes these settings complicated, not the fact that they are singular. Whilst there is only one minimal resolution $\scrZ$, there are plenty of partial resolutions $\scrY$, and furthermore the lack of uniqueness is \emph{the} defining feature of the $3$-fold flops setting.  To compensate for this requires us to work, at least initially, in groupoids.  

\medskip
As is well-known, and is recalled in \S\ref{subsec:arrangements}, to each $\scrX$, $\scrY$ or $\scrZ$ is an associated finite simplicial hyperplane arrangement $\scrH$.  In the case of the minimal resolution, this is simply the associated ADE root system, and in general the construction is roughly similar.  To each chamber $\chamC$ in $\scrH$ we assign a category $\scrC_\chamC$, defined in \S\ref{sec: bij MMP} in terms of an associated noncommutative resolution (or variant). To each path in the Deligne groupoid $\upbeta\colon\chamC\to\chamD$, we assign a composition of mutation functors $\Phi_\upbeta\colon\scrC_\chamC\to\scrC_\chamD$.

\begin{thm}[\ref{make module}]
If $x\in\scrC=\scrC_\chamC$ satisfies $\Hom_{\scrC}( x, x[i])=0$ for all $i<0$, then there exists $\upbeta\colon\chamC\to\chamD$  such that $\Phi_\upbeta (x)\cong y$, where $y$ is an object in homological degree zero.
\end{thm}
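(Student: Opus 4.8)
The plan is to induct on the cohomological amplitude of $x$ with respect to the standard heart, reducing it at each stage by crossing a suitable path of walls of $\chamC$.

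Write $\scrA_{\chamC}$ for the heart of the standard t-structure on $\scrC=\scrC_{\chamC}$. By the construction recalled in \S\ref{sec: bij MMP} this is the module category of a finite-dimensional algebra, hence a length category, and its simple objects are indexed by the facets of $\chamC$. After an overall shift we may assume the cohomology objects $H^i(x)\in\scrA_{\chamC}$ vanish for $i\notin[0,N]$, with $H^0(x)\neq 0\neq H^N(x)$; if $N=0$ the statement holds with $\upbeta=\mathrm{id}$, so assume $N\geq 1$. As each mutation functor $\Phi_{\upbeta}$ is an equivalence it preserves the hypothesis $\Hom_{\scrC}(-,-[i])=0$ for $i<0$, so it suffices to produce a single path $\upgamma\colon\chamC\to\chamD$ along which some complexity of the image of $x$ strictly drops, and then iterate.

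First I would extract homological consequences of the hypothesis. In the spectral sequence computing $\Hom_{\scrC}(x,x[\bullet])$ from the groups $\Ext^{p}_{\scrA_{\chamC}}(H^i(x),H^{j}(x))$, a degree count shows the only contribution in total degree $-N$ is $\Hom_{\scrA_{\chamC}}(H^N(x),H^0(x))$, carrying no differential in or out; since $\Hom_{\scrC}(x,x[-N])=0$ this gives $\Hom_{\scrA_{\chamC}}(H^N(x),H^0(x))=0$, i.e.\ the top and bottom cohomology layers of $x$ share no simple that is simultaneously a quotient of the former and a subobject of the latter. Next I would use this --- reinforced, where needed, by Calabi--Yau duality on $\scrC$ in the smooth case and its substitute otherwise --- to locate an extreme simple $S$ at which one wall-crossing shrinks an outermost layer without enlarging the opposite one. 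Concretely, if $S$ is a simple quotient of $H^0(x)$ that is not a quotient of $H^N(x)$, then the simple tilt of $\scrA_{\chamC}$ induced by the corresponding facet of $\chamC$ (identified in \S\ref{sec: bij MMP}) raises the $S$-isotypic quotient part of every $H^i(x)$ by one degree: the bottom layer in degree $0$ strictly shrinks while the top is not pushed up to degree $N+1$. The dual choice --- a simple subobject of $H^N(x)$ that is not a subobject of $H^0(x)$ --- does the symmetric job at the top. Iterating such crossings empties an outermost degree after finitely many steps, the amplitude drops, and concatenation of the paths yields $\upbeta$.

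The hard part is locating this reducing wall, and in particular the obstructive case in which every extreme simple is ``blocked'' at both ends at once, so that no wall-crossing acting on an outermost cohomology layer reduces the amplitude. To deal with it one must push the hypothesis further in --- crossing instead a wall acting on $H^1(x)$ or $H^{N-1}(x)$ and decreasing a secondary invariant, or excluding the configuration entirely --- and this is where the geometry has to be used: in the singular settings $\scrA_{\chamC}$ has unbounded self-extensions, so the spectral sequence has arbitrarily many pages and, beyond the outermost vanishing above, the required $\Ext$-vanishings cannot simply be read off, and one is thrown back on the finiteness of the simplicial arrangement $\scrH$ (the feature that makes these categories, in effect, silting discrete) and the precise form of the mutation functors from \S\ref{subsec:arrangements}--\S\ref{sec: bij MMP}, rather than on anything valid for a general triangulated category with finite-dimensional Hom spaces. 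A routine but necessary bookkeeping point, built into that same setup, is that every simple tilt used is realised by an honest path in the Deligne groupoid, so the composite really is a functor $\Phi_{\upbeta}$.
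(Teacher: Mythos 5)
Your opening moves match the paper's: normalise $x\in[\![a,b]\!]$, note that equivalences preserve the hypothesis, and extract from the spectral sequence that $\Hom(\H^{b}x,\H^{a}x)=0$ (this is \ref{magic lemma}\eqref{magic lemma 2}). But the core of the proof is missing, and you say so yourself: the ``obstructive case in which every extreme simple is blocked'' is exactly the difficulty, and gesturing at the finiteness of $\scrH$ does not resolve it. Two concrete problems. First, your reducing step is not justified even when the wall exists: crossing a single wall at a simple $S$ with $\Hom(\H^{b}x,S)=0$ guarantees only that the top is not pushed up (\ref{lem:atoms}\eqref{lem:atoms 2}); it does not make the bottom layer ``strictly shrink'', so your iteration has no strictly decreasing invariant and no termination argument. (Also, your candidate $S$ --- a simple \emph{quotient} of $\H^{0}x$ not a quotient of $\H^{N}x$ --- is not produced by the vanishing you derived; the deduction $\Hom(\H^{N}x,S)=0$ from $\Hom(\H^{N}x,\H^{0}x)=0$ works for a simple \emph{subobject} $S\hookrightarrow\H^{0}x$, via the composite $\H^{N}x\to S\hookrightarrow\H^{0}x$.) Second, when you iterate wall crossings you must control the \emph{composite}, not just each single step: after several crossings the relevant criterion for the next crossing concerns the cohomology of the image under the whole composition, and nothing in your proposal prevents later crossings from pushing the top up to $b+1$.

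The paper's mechanism, which is the missing idea, is to argue on the finite poset of \emph{atoms}: set $\Updelta=\{\upalpha \text{ an atom} \mid \Phi_\upalpha(x)\in[a,b]\}$, take a maximal element $\upbeta$ (finiteness of $\scrH$ enters only here), and show that if $\H^{a}(\Phi_\upbeta x)\neq 0$ then $\upbeta$ can be extended, contradicting maximality. The extension step needs two things for a simple $\scrS_i\hookrightarrow\H^{a}(\Phi_\upbeta x)$: that $\Hom(\H^{b}(\Phi_\upbeta x),\scrS_i)=0$, which comes from \ref{magic lemma}\eqref{magic lemma 2} applied to $\Phi_\upbeta(x)$, so the extra crossing keeps the image in $[a,b]$; and --- crucially, and entirely absent from your proposal --- that $s_i\circ\upbeta$ is again an atom, which is verified by the criterion \ref{lem:atoms}\eqref{lem:atoms 3} after a second application of \ref{magic lemma}, this time to the pair $\Phi_\upbeta(\scrS)$ and $\Phi_\upbeta(x)$, using $\Hom(\Phi_\upbeta\scrS,\Phi_\upbeta x[a-1])\cong\Hom(\scrS,x[a-1])=0$. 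This is what excludes your ``blocked'' configuration: at a maximal atom the bottom cohomology must already vanish, so the amplitude drops and one can induct, finishing with $\Phi_\ell^{\pm1}$ as in \ref{actions on some modules}\eqref{actions on some modules 3} to land in degree zero. Without this (or some substitute mechanism that simultaneously controls the top under composition and guarantees termination), the proposal does not prove the theorem.
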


The proof is both short and elementary, and there are only two key points.  First, cohomology spread should be measured with respect to the noncommutative (equivalently, perverse) t-structure, not with respect to coherent sheaves.  Second, if say $x$ has cohomology lying within some bounded region $[a,b]$, then the proof finds some $\Phi_\upalpha(x)$ whose cohomology lies in a strictly smaller region $[a+1,b]$.  This is achieved by using the fact that any finite poset has a maximal element.

The first corollary is the following. To set notation, write $\scrS_1,\hdots,\scrS_n$ for the simple modules in the heart of the standard algebraic t-structure (see \S\ref{subsec: simples}).

\begin{cor}[\ref{cor 2 3folds}]\label{cor 2 3folds intro}
Suppose that $x\in\scrC_\chamC$ satisfies $\Hom( x, x[i])=0$ for all $i<0$.
\begin{enumerate}
\item If $\dim_{\mathbb{C}}\Hom(x_i, x_j)\cong \mathbb{C}\updelta_{ij}$ for all indecomposable summands $x_i, x_j$ of $x$, then there exists a subset $\scrI\subseteq\{1,\hdots,n\}$ and a path $\upgamma\colon \chamC\to\mathsf{E}$ such that $\Phi_\upgamma(x)\cong \bigoplus_{i\in \scrI}\scrS_{i}$.
\item If $\dim_{\mathbb{C}}\Hom(x, x)\cong \mathbb{C}$, then there exists $\upgamma\colon \chamC\to\mathsf{E}$ and $i$ such that $\Phi_\upgamma(x)\cong \scrS_i$.
\end{enumerate}
\end{cor}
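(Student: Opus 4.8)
The strategy is to reduce to Theorem \ref{make module} and then analyze the resulting degree-zero object inside a heart. First I would apply Theorem \ref{make module} to obtain a path $\upbeta\colon\chamC\to\chamD$ with $\Phi_\upbeta(x)\cong y$, where $y$ lies in homological degree zero, i.e.\ $y$ is an object of the heart $\cA_\chamD$ of the noncommutative (perverse) t-structure attached to $\chamD$. Since mutation functors are equivalences, they preserve $\Hom$ spaces in all degrees, so $y$ still satisfies $\Hom(y,y[i])=0$ for $i<0$ — but now, being in the heart, it automatically satisfies $\Hom(y,y[i])=0$ for $i<0$ anyway, and the content is that the hypotheses of (1) or (2) transfer: $\Hom(y_i,y_j)=\mathbb{C}\updelta_{ij}$ on indecomposable summands in case (1), or $\Hom(y,y)=\mathbb{C}$ in case (2). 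So the problem becomes: \emph{classify the objects of a length heart $\cA_\chamD$ all of whose indecomposable summands are pairwise $\Hom$-orthogonal bricks} (resp.\ which are themselves bricks).

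The key algebraic input is that each $\cA_\chamD$ is the module category of a (complete local, or finite-dimensional) algebra, hence a length category with finitely many simples, and in such a category the only bricks $S$ with $\Hom(S,S)=\mathbb{C}$ that are moreover \emph{rigid} in the ambient CY-type category — here I would invoke that $y$ has no negative self-exts together with the $d$-Calabi--Yau (or relative CY) property of $\scrC$ to get no positive self-exts beyond the top — are forced to be simple. Concretely, for (2): write $y$ in terms of a composition series in $\cA_\chamD$; if $y$ had a subobject $y'$ with $0\neq y'\subsetneq y$, the inclusion and projection would produce endomorphisms contradicting $\Hom(y,y)=\mathbb{C}$ unless $y$ is simple, because a length object with trivial endomorphism ring and no self-extensions of the "wrong" sign must be simple (a non-simple brick in a length category has a nonsplit self-extension or a nontrivial sub/quotient forcing $\dim\End>1$, and here rigidity rules out the extensions). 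Thus $y\cong \scrT_i$ for some simple $\scrT_i$ of $\cA_\chamD$. For (1): the orthogonality hypothesis forces $y=\bigoplus_{i\in\scrI}\scrT_i$ for a set of simples, since each indecomposable summand is individually a simple by the argument just given, and distinct summands are orthogonal, so no two can be isomorphic to the same simple and none can be a non-simple brick.

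Finally I would transport this back to the \emph{standard} heart. The simples of $\cA_\chamD$ are, up to the groupoid action, carried to the simples $\scrS_1,\dots,\scrS_n$ of the standard algebraic t-structure by a further path $\upbeta'\colon\chamD\to\mathsf{E}$ (this is part of the setup identifying all these hearts within one mutation orbit); composing, set $\upgamma\colonequals \upbeta'\circ\upbeta\colon\chamC\to\mathsf{E}$. Then $\Phi_\upgamma(x)=\Phi_{\upbeta'}(y)\cong\bigoplus_{i\in\scrI}\scrS_i$ in case (1), and $\cong\scrS_i$ in case (2). The main obstacle I anticipate is the middle step: rigorously ruling out non-simple bricks in $\cA_\chamD$. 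One must genuinely use that $x$ (hence $y$) is rigid in the appropriate sense — either the $\Hom$-orthogonality/brick hypothesis alone, combined with the t-structure heart being a length category, or an additional vanishing coming from the CY property — to exclude, say, a self-extension-free brick of length $2$; in the singular cases where self-ext groups are unbounded this requires care, and is presumably where a lemma from earlier in the paper about simples in these specific hearts is invoked.
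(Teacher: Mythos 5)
There is a genuine gap, and it is precisely at the step you flagged as the ``main obstacle'': your claim that a brick lying in the heart $\fmod \mathrm{A}_{\chamD,\con}$ must be a simple of that heart is false, and no amount of rigidity available from the hypotheses can rescue it. The hypothesis $\Hom(x,x[i])=0$ for $i<0$ is vacuous once $y=\Phi_\upbeta(x)$ is a module (any two objects of a heart have no negative Exts), so it imposes no $\Ext^1$-vanishing; and even an $\Ext^1$-rigid brick over a finite dimensional algebra need not be simple (the length-two projective-injective over the $A_2$ quiver algebra is already a counterexample). Worse, in the very categories at hand non-simple bricks abound: the paper's own \ref{ePie dim vectors} and \ref{ex: Dynkin res} show that bricks over $e_\scrI\Uppi e_\scrI$ (and over contraction algebras) have dimension vectors such as $(1,1)$ or $(2,1)$, which are visibly not unit vectors. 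So after applying \ref{make module} you cannot conclude $y$ is a sum of simples of the standard heart in chamber $\chamD$; the whole point is that a \emph{further} mutation $\upalpha\colon\chamD\to\chamE$ is needed, and producing it is the real content of the corollary. (Your final ``transport to the standard heart'' step also betrays this confusion: $\fmod\mathrm{A}_{\chamD,\con}$ \emph{is} the standard heart of $\scrC_\chamD$, so if $y$ were already a sum of its simples there would be nothing left to do.)

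What the paper actually does at this point, and what your proposal is missing, is the following. In the $3$-fold setting, $y$ is a semibrick module over $\Lambda_{\chamD,\con}$, which is silting discrete; by Asai's theorem every semibrick module is then a subset of a $2$-term simple minded collection, i.e.\ a subset of the simples of an intermediate bounded t-structure on $\Db(\fmod\Lambda_{\chamD,\con})$. By August's classification, every such heart equals $F_\upalpha^{-1}(\fmod\Lambda_{\chamE,\con})$ for some atom $\upalpha\colon\chamD\to\chamE$, and the commutative diagram \eqref{Jenny comm diag} converts this into $\Phi_\upalpha(y)\cong\bigoplus_{i\in\scrI}\scrS_{i,\chamE}$, whence $\upgamma=\upalpha\circ\upbeta$. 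The surfaces cases are then handled by a separate reduction: one constructs a $3$-fold flopping contraction whose generic hyperplane section recovers $e_\scrI\Uppi e_\scrI$, checks via adjunction that the restriction of $y$ remains a semibrick, applies the $3$-fold result, and uses that restriction reflects isomorphisms on module categories. Your proposal would need to incorporate this semibrick-completion and heart-classification machinery (or some substitute for it); without it the argument does not go through.
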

Given the control of the mutation functor orbit obtained in \ref{cor 2 3folds intro}, a standard variation on results of \cite{IUU} and \cite{BDL} can then be used to show that the space of stability conditions on $\scrC$ is connected and thus $\cStab{}\scrC=\Stab\scrC$.   However, a much stronger statement holds. 

The following result classifies all bounded t-structures on $\scrC$ in terms of the standard hearts. These are, by definition, the extension closures of the simples $\scrS_1,\hdots,\scrS_n$ (see \S\textnormal{\ref{subsec: cont alg}}).   This is a purely homological result, which is new in all cases, and its proof does not require stability techniques.

\begin{cor}[\ref{cor: no exotic}]
If $\scrH$ is the heart of a bounded t-structure on $\scrC_\chamC$, then there exists $\Phi_\upbeta$ for some $\upbeta\colon \chamC\to\chamD$ such that $\Phi_\upbeta(\scrH)$ is the standard heart on $\scrC_\chamD$.  In particular, $\scrH$ is a finite length category, with finitely many simples.
\end{cor}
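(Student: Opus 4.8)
The plan is to combine an elementary structural fact about nested hearts with a version of the degree-lowering argument used to prove Theorem~\ref{make module}. The structural fact is that if $\scrA_{1}\subseteq\scrA_{2}$ are both hearts of bounded t-structures on a triangulated category, then $\scrA_{1}=\scrA_{2}$: a bounded aisle is the closure of its heart under extensions and positive shifts, so $\scrA_{1}\subseteq\scrA_{2}$ forces $\scrC^{\leq 0}_{\scrA_{1}}\subseteq\scrC^{\leq 0}_{\scrA_{2}}$ and $\scrC^{\geq 0}_{\scrA_{1}}\subseteq\scrC^{\geq 0}_{\scrA_{2}}$, and two t-structures with both inclusions must coincide. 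Granting this, write $\scrB_{\chamD}$ for the standard heart on $\scrC_{\chamD}$; it suffices to produce a path $\upbeta\colon\chamC\to\chamD$ with $\Phi_{\upbeta}(\scrH)\subseteq\scrB_{\chamD}$, for then $\Phi_{\upbeta}(\scrH)=\scrB_{\chamD}$, which is the assertion, and since $\Phi_{\upbeta}$ is an equivalence $\scrH\simeq\scrB_{\chamD}$ is finite length with exactly $n$ simples.

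To build $\upbeta$, I would rerun the argument of Theorem~\ref{make module}, measuring not the amplitude of a single object but of the whole heart. Every $M\in\scrH$ has $\Hom_{\scrC}(M,M[i])=0$ for $i<0$ (indeed $\Hom_{\scrC}(M,N[i])=0$ for all $M,N\in\scrH$ and $i<0$), so the hypothesis of Theorem~\ref{make module} is available for each object of $\scrH$. Working in the current chamber, set $b(\scrH)=\sup\{\,i\mid \H^{i}_{\chamC}(M)\neq 0\text{ for some }M\in\scrH\,\}$ and $a(\scrH)=\inf\{\,i\mid \H^{i}_{\chamC}(M)\neq 0\text{ for some }M\in\scrH\,\}$, where $\H^{i}_{\chamC}$ is $i$-th cohomology for the standard (perverse) t-structure. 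If $a(\scrH)=b(\scrH)=0$ then $\scrH\subseteq\scrB_{\chamC}$ and we are done by the first paragraph. Otherwise $\scrH$ protrudes from $\scrB_{\chamC}$ on at least one side, say $b(\scrH)\geq 1$; then, exactly as in Theorem~\ref{make module} — examining the top perverse cohomology of the objects of $\scrH$ attaining the extreme degree and crossing the wall $\Phi_{\upalpha}$ indexed by a maximal element of the pertinent finite poset of simples — one finds $\upalpha$ with $b(\Phi_{\upalpha}(\scrH))\le b(\scrH)$, with strict decrease of the associated combinatorial quantity and no increase of the protrusion at the other end. Iterating at the top, then symmetrically at the bottom, yields $\upbeta\colon\chamC\to\chamD$ with $a(\Phi_{\upbeta}(\scrH))=b(\Phi_{\upbeta}(\scrH))=0$, i.e.\ $\Phi_{\upbeta}(\scrH)\subseteq\scrB_{\chamD}$. (Alternatively, once finite length is known the simples of $\scrH$ form a simple-minded collection of bricks with no negative self-extensions, and Corollary~\ref{cor 2 3folds intro}(2) sends each to some $\scrS_{i}$; but aligning the separate paths into one $\upbeta$ is the same bookkeeping.)

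Two points need genuine care, and I expect the first to be the main obstacle. (i) \emph{Finiteness of the amplitude.} For an arbitrary bounded t-structure $a(\scrH)$ and $b(\scrH)$ could a priori be infinite, so the extreme degrees used above need not exist; ruling this out must use the special structure of $\scrC$ — that it has a classical generator assembled from the finitely many simples $\scrS_{1},\dots,\scrS_{n}$, together with finiteness of the arrangement (the geometric counterpart of silting discreteness). Proving instead at the outset that $\scrH$ is finite length with at most $n$ simples, their classes being independent in $K_{0}(\scrC)\cong\bZ^{n}$, is the same difficulty in another guise. (ii) \emph{Termination.} One must check that a single wall-crossing realises the claimed one-step improvement of the measure attached to $\scrH$ — this is exactly where the ``every finite poset has a maximal element'' device of Theorem~\ref{make module} is reused — and that the controlling quantity (say a pair of non-negative integers ordered lexicographically, refined by a point of a finite poset) strictly decreases, so the process stops. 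A minor bookkeeping point is that the procedure might terminate at a shift $\scrB_{\chamD}[k]$ rather than at $\scrB_{\chamD}$ itself; this is handled either by noting that such shifts are again $\Phi$-images of standard hearts via loops in the Deligne groupoid, or by arranging the reduction so that the two extremal degrees reach $0$ simultaneously.
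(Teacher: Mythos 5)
Your overall strategy is the paper's: bound the amplitude of $\scrH$ with respect to the standard heart, run the maximal-atom degree-lowering argument of \ref{make module} uniformly over the heart, use that nested bounded hearts coincide, and fix the residual shift with $\Phi_\ell^{\pm 1}$. But the step you yourself flag as ``the main obstacle'' --- point (i), the finiteness of $a(\scrH)$ and $b(\scrH)$ --- is left genuinely unproved, and your guesses at how to fill it (classical generators, finiteness of the arrangement as ``the geometric counterpart of silting discreteness'', or proving finite length of $\scrH$ up front) point in the wrong direction. The paper closes this gap with a short symmetric argument (\ref{bound of hearts by smc}): boundedness of the t-structure defining $\scrH$ bounds the $\scrH$-cohomology of the \emph{single} object $\scrS=\bigoplus\scrS_i$, so $\scrS\in[c,d]_\scrH$; since the standard heart is length with finitely many simples it is the extension closure of $\scrS$, hence the whole standard heart lies in $[c,d]_\scrH$; and the Hom-vanishing characterisation of these intervals (\ref{bound for objects}, \ref{bound for hearts}) is symmetric, giving $\scrH\in[-d,-c]$ with respect to the standard heart. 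No generator of $\scrC$, no finiteness of the arrangement, and no a priori finite-length statement for $\scrH$ is needed. As written, your proposal is therefore incomplete at its first and essential step.

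A secondary issue is point (ii): you assert that ``exactly as in Theorem \ref{make module}'' one wall-crossing improves the whole heart, but the new content of this theorem relative to \ref{make module} is precisely the uniformity. The paper fixes a maximal atom $\upbeta$ in the finite poset of atoms $\upalpha$ with $\Phi_\upalpha(\scrH)\subseteq[a,b]$ (a poset of atoms, not ``of simples'' as in your sketch), fixes one offending object $x\in\scrH$ with $\H^a(\Phi_\upbeta x)\neq 0$ and a simple $\scrS_i\hookrightarrow\H^a(\Phi_\upbeta x)$, and then, for \emph{every} $y\in\scrH$, uses $\Hom(\Phi_\upbeta y,\Phi_\upbeta x[j])=0$ for $j<0$ (both lie in the heart $\scrH$) together with \ref{magic lemma} to get $\Hom(\H^b(\Phi_\upbeta y),\scrS_i)=0$, so that crossing the single wall $s_i$ keeps all of $\Phi_\upbeta(\scrH)$ inside $[a,b]$ while $s_i\circ\upbeta$ is again an atom, contradicting maximality; the improvement is one-sided (raising $a$ to $a+1$), with no separate ``top then bottom'' phase. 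Your sketch gestures at this but does not supply the argument that one wall works simultaneously for all objects of $\scrH$, which is the point that makes the heart-wise version go through.
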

From this, in \ref{cor: stability connected} the statement $\cStab{}\scrC=\Stab\scrC$ immediately follows, and avoids the use of any fancy deformation argument.  In \ref{cor: FM char} we also give an intrinsic characterisation of the group $\Auteq\scrC$ in terms of certain FM transforms. 

\subsection{Geometric Corollaries}
By removing the noncommutative resolutions technology implicit in the previous subsection, in particular the use of the mutation functors, it is possible to translate the above results into more standard geometric language.  We translate some results here, with more details and results given in \S\ref{sec4: geo cors}.

In the case of the minimal resolution $\scrZ$, the mutation functors are functorially isomorphic to the Seidel--Thomas twist functors.  Recall that $\scrC$ is the category geometrically defined in \eqref{eqn: define C intro}, and to each exceptional curve $\Curve_i$ in $\scrZ$, the associated $\scrO_{\Curve_i}(-1)$ is spherical, so gives rise to a spherical twist functor $t_i$.  These generate a subgroup $\Br$ of $\Auteq\scrC$.

Part (2) of the following recovers the main result of \cite{BDL}.

\begin{cor}[\ref{thm: minimal main}]\label{intro: minimal main}
Consider $\scrZ\to\mathbb{C}^2/\Gamma$, and let $x\in\scrC$.  Then the following hold. 
\begin{enumerate}
\item If $\Hom_\scrC(x,x[j])=0$ for all $j<0$, then there exists $T\in\Br$ such that $T(x)$ is a concentrated in homological degree zero.
\item Every spherical object in $\scrC$ belongs to the orbit, under the action of the braid group, of the objects $\scrO_{\Curve_1}(-1),\hdots,\scrO_{\Curve_n}(-1)$.
\end{enumerate}
Furthermore, the heart of every bounded t-structure on $\scrC$ is the image, under the action of the group $\Br$, of the module category of the preprojective algebra of (finite) ADE type.
\end{cor}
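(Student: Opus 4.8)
I address the final assertion (every heart of a bounded t-structure on $\scrC$ is a $\Br$-translate of $\fmod\Pi$); parts (1) and (2) follow the same template but start from Theorem~\ref{make module} and Corollary~\ref{cor 2 3folds} instead. The plan is to read the t-structure statement off Corollary~\ref{cor: no exotic}, converting its groupoid-valued conclusion into a statement about the single group $\Br$ acting on the single category $\scrC$. The extra ingredient is the rigidity of the minimal resolution, which I would record as two facts. (i) Because $\scrZ$ is the \emph{unique} crepant resolution of $\mathbb{C}^2/\Gamma$, the noncommutative resolution $\Lambda$ attached to a chamber of the arrangement is independent of that chamber up to isomorphism (each mutation $\mu_i\Lambda$ is again $\Lambda$); hence for every chamber $\chamD$ there is an equivalence $\scrC_\chamD\simeq\scrC$ sending the standard heart $\scrH^{\mathrm{std}}_\chamD$ to $\fmod\Pi$, where $\Pi=\Pi_\Gamma$ is the preprojective algebra of the ADE Dynkin quiver associated to $\Gamma$ via McKay; this $\fmod\Pi$ is the standard heart of $\scrC$, its simples $\scrS_i$ being the spherical objects $\scrO_{\Curve_i}(-1)$ of part~(2). (ii) Under these identifications each wall-crossing mutation functor is functorially isomorphic to $t_i^{\pm1}$ for the relevant exceptional curve --- the isomorphism between mutation functors and Seidel--Thomas twists recorded just above --- so any composite $\Phi_\upbeta$ along a path $\upbeta\colon\chamC\to\chamD$, after its source and target are identified with $\scrC$, becomes an element of $\Br=\langle t_1,\dots,t_n\rangle$.

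Granting (i) and (ii), let $\scrH$ be the heart of any bounded t-structure on $\scrC=\scrC_\chamC$. Corollary~\ref{cor: no exotic} supplies a path $\upbeta\colon\chamC\to\chamD$ with $\Phi_\upbeta(\scrH)=\scrH^{\mathrm{std}}_\chamD$. Composing $\Phi_\upbeta$ with the equivalence $\scrC_\chamD\simeq\scrC$ of (i) gives an autoequivalence $S$ of $\scrC$ with $S(\scrH)=\fmod\Pi$, and $S\in\Br$ by (ii); therefore $\scrH=S^{-1}(\fmod\Pi)$ is a $\Br$-translate of the standard heart, which is the claim. Conversely, every such translate does occur, since $\Br\subseteq\Auteq\scrC$ and autoequivalences send bounded t-structures to bounded t-structures.

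The homological work is thus done entirely by Corollary~\ref{cor: no exotic}; the real labour, and the place I expect difficulty, is in step (i)--(ii): pinning down the equivalences $\scrC_\chamD\simeq\scrC$ and verifying that they can be chosen so that the Deligne groupoid action of the mutation functors is genuinely intertwined with --- not merely conjugate to --- the $\Br$-action on $\scrC$, while at the same time each $\scrH^{\mathrm{std}}_\chamD$ is carried to $\fmod\Pi$ (or, harmlessly, to some $\Br$-translate of it). This is exactly the bookkeeping that also underpins the deduction of parts~(1) and~(2) from Theorem~\ref{make module} and Corollary~\ref{cor 2 3folds}, so once it is set up the t-structure statement costs nothing further.
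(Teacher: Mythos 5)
Your proposal is correct and takes essentially the same route as the paper: identify the geometric $\scrC$ with $\scrC_\chamC$ via the tilting equivalence $\Psi_\scrZ$, use that the mutation functors are functorially isomorphic to the twists $t_i$ and that the simples $\scrS_i$ correspond to $\scrO_{\Curve_i}(-1)$, and then quote \ref{make module}, \ref{cor 2 3folds} and \ref{cor: no exotic}. The bookkeeping you flag in (i)--(ii) as the ``real labour'' is in fact immediate from the paper's setup, since for the minimal resolution the categories $\scrC_\chamC$ assigned to the chambers are literally equal (all defined inside $\Db(\fmod\Uppi)$ for the affine preprojective algebra, with the same standard heart $\fmod(\Uppi/(e_0))$), so the Deligne groupoid representation is already a $\Br$-action on the fixed $\scrC$ and no choice of intertwining equivalences is needed.
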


The case of partial resolutions $\scrY\to\mathbb{C}^2/\Gamma$ is similar, but mildly more difficult to state, so details are left to \ref{cor: cpartial groupoid main}.   

For $3$-fold flopping contractions $\scrX\to\Spec\scrR$, the mutation functor $\Phi_i$ is functorially isomorphic to the inverse of the Bridgeland--Chen flop functor \cite[4.2]{HomMMP}, which will be written $\Flop_i$.  These are the square roots of the spherical twist functors, suitably interpreted (see \ref{rem: calibration}).  The following generalises the particular example in \cite[6.12(1)]{SW} to all $3$-fold flops.

\begin{cor}[\ref{cor: flop main}]\label{intro: flop main}
Let $\scrX\to\Spec\scrR$ be a $3$-fold flopping contraction, where $\scrX$ has at worst Gorenstein terminal singularities, and consider $x\in\scrC$.
\begin{enumerate}
\item The following statements are equivalent.
\begin{enumerate}
\item $\Hom_\scrC(x,x[j])=0$ for all $j<0$.
\item There exists a sequence of flop functors such that $\Flop_\upbeta(x)$ belongs to perverse sheaves, on a possibly different $\scrX^+\to\Spec\scrR$.
\end{enumerate}
\item  Furthermore, the following are equivalent.
\begin{enumerate}
\item $\Hom_\scrC(x,x[j])=0$ for all $j<0$ and $\Hom_\scrC(x,x)\cong\mathbb{C}$.
\item There exists $\upbeta$ such that $\Flop_\upbeta(x)\cong\scrO_{\Curve_i}(-1)$ for some curve $\Curve_i$ on a possibly different $\scrX^+\to\Spec\scrR$.
\end{enumerate}
\end{enumerate}
Furthermore, the heart of every bounded t-structure on $\scrC$ is the image, under the action of the group generated by the flop functors, of the module category of the contraction algebra of some $\scrX^+\to\Spec\scrR$ obtained from $\scrX\to\Spec\scrR$ by iterated flop.
\end{cor}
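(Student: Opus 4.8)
The plan is to deduce this final statement from the abstract classification of bounded t-structures in Corollary~\ref{cor: no exotic}, combined with the groupoid-to-geometry dictionary already used for the earlier parts of Corollary~\ref{cor: flop main}. Writing $\chamC$ for the chamber of $\scrH$ attached to $\scrX\to\Spec\scrR$, I would first recall from \S\ref{sec: bij MMP} the equivalence $\scrC\simeq\scrC_\chamC$ that identifies the null category of $\scrX$ with the mutation-theoretic model $\scrC_\chamC$, and extract three features of this identification: the simples $\scrS_1,\dots,\scrS_n$ of the standard heart of $\scrC_\chamC$ are exactly the simple modules of the contraction algebra $\Lambda_{\con}$ of $\scrX$, so that the standard heart \emph{is} $\fmod\Lambda_{\con}$; by \cite[4.2]{HomMMP}, each mutation functor $\Phi_i$ corresponds under these equivalences to the flop functor $\Flop_i$; and for a $3$-fold flopping contraction every chamber $\chamD$ of $\scrH$ is the chamber of a minimal model $\scrX^+\to\Spec\scrR$, with any path $\upbeta\colon\chamC\to\chamD$ in the Deligne groupoid exhibiting $\scrX^+$ as obtained from $\scrX$ by a sequence of flops. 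All three are recorded earlier in the paper, so the dictionary is available uniformly in $\chamD$.

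Granting this, the argument is short. Let $\scrH$ be the heart of a bounded t-structure on $\scrC$. Transporting $\scrH$ across $\scrC\simeq\scrC_\chamC$ and applying Corollary~\ref{cor: no exotic} produces a path $\upbeta\colon\chamC\to\chamD$ for which $\Phi_\upbeta(\scrH)$ is the standard heart on $\scrC_\chamD$. By the previous paragraph, $\chamD$ is the chamber of a minimal model $\scrX^+\to\Spec\scrR$ reached from $\scrX$ by iterated flop, the standard heart on $\scrC_\chamD$ is $\fmod\Lambda_{\con}^{+}$ for the contraction algebra $\Lambda_{\con}^{+}$ of $\scrX^+$, and $\Phi_\upbeta\cong\Flop_\upbeta$ is a composite of flop functors. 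Therefore $\scrH\cong\Flop_\upbeta^{-1}\bigl(\fmod\Lambda_{\con}^{+}\bigr)$, i.e.\ $\scrH$ is the image of $\fmod\Lambda_{\con}^{+}$ under an element of the group generated by the flop functors, which is the assertion.

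Since the only genuinely new ingredient here is Corollary~\ref{cor: no exotic}, the part of the write-up that needs care is the coherence of this chain of identifications, and that is where I would spend the effort. Specifically I would verify: that the abstract standard heart of $\scrC_\chamD$ (the extension closure of the $\scrS_i$, see \S\ref{subsec: cont alg}) agrees, as a subcategory of $\scrC$, with the module category of the \emph{geometric} contraction algebra of $\scrX^+$ and has precisely the $n$ simples $\scrS_i$, so that it meets the finite-length conclusion of Corollary~\ref{cor: no exotic}; that the convention identifying $\Phi_i$ with $\Flop_i$ rather than with $\Flop_i^{-1}$ is used consistently, which is harmless since the group generated by the flop functors is closed under inverses; and that every chamber of $\scrH$ indeed corresponds to an honest minimal model, so that the output heart is genuinely a contraction algebra module category and not merely that of a noncommutative variant. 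I do not expect any further obstacle, as each of these compatibilities is already established in \S\ref{sec: bij MMP} and in the discussion preceding Corollary~\ref{cor: flop main}.
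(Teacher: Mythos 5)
Your handling of the final claim (the classification of bounded t-structure hearts) is correct and is essentially the paper's own route: transport $\scrH$ across the tilting equivalence $\Psi_\scrX$ of \eqref{NCCR Db} into $\scrC_\chamC$, apply \ref{cor: no exotic} to get $\upbeta\colon\chamC\to\chamD$ with $\Phi_\upbeta(\scrH)$ the standard heart, and then use the dictionary of \S\ref{sec: bij MMP}: the standard heart of $\scrC_\chamD$ is $\fmod\Lambda_{\chamD,\con}$, the chamber $\chamD$ corresponds to some $\scrX^+\to\Spec\scrR$ obtained from $\scrX$ by iterated flop, and by \cite[4.2]{HomMMP} each $\Phi_i$ is functorially isomorphic to $\Flop_i^{-1}$ (your remark that the $\Flop_i$ versus $\Flop_i^{-1}$ calibration is harmless, since the group generated by the flop functors contains inverses, is exactly right). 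The compatibilities you list to check are indeed the ones recorded in \S\ref{subsec: cont alg} and \S\ref{sec: bij MMP}, so no new content is needed there.

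However, the statement you were asked to prove is all of \ref{intro: flop main}, not only its last sentence, and your proposal never establishes parts (1) and (2): it presupposes ``the earlier parts of Corollary \ref{cor: flop main}'', which is circular, since \ref{intro: flop main} and \ref{cor: flop main} are the \emph{same} statement and those parts are not proved anywhere else. To close this gap you need two further inputs, both available with the dictionary you already set up: part (1) follows from \ref{make module}, which produces $\upbeta\colon\chamC\to\chamD$ with $\Phi_\upbeta(x)\in\fmod\Lambda_{\chamD,\con}$, i.e.\ $\Flop_\upbeta(x)$ lies in the perverse heart on the corresponding $\scrX^+$; and part (2) follows from \ref{cor 2 3folds}\eqref{cor 2 3folds 2}, which sends such an $x$ with $\Hom_\scrC(x,x)\cong\mathbb{C}$ to a simple $\scrS_i$, together with the fact (via \cite[3.5.8]{VdB}) that $\scrS_i$ corresponds to $\scrO_{\Curve_i}(-1)$ under the tilting equivalence. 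Note in particular that the implications (b)$\Rightarrow$(a) in both parts also deserve a sentence (they are immediate because the flop functors are equivalences and modules, respectively bricks, have the stated $\Hom$-vanishing). With these additions your argument coincides with the paper's proof.
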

In particular, when $\scrX$ is smooth, $x\in\scrC$ is spherical if and only if there exists a $(-1,-1)$-curve $\Curve_i$, on a possibly different $\scrX^+$, and a composition $\upbeta\colon\chamD\to\chamC$ in the Deligne groupoid such that $\Flop_\upbeta(\scrO_{\Curve_i}(-1))\cong x$.  Perhaps of more interest is when the groupoid result \ref{intro: flop main} is used to prove statements regarding the orbit of the pure braid group $\PBr$ on a fixed $\scrC$. In \ref{cor: flop main PBr} we describe the spherical objects in terms of the orbit under this action, generalising \cite[6.12(2)]{SW} to all $3$-fold flops.

\subsection{Algebraic Corollaries}

A more algebraic corollary of \ref{cor 2 3folds intro} is a generalisation of a result of Crawley--Boevey \cite[Lemma 1]{BCB} which states that every brick for the preprojective algebra $\Uppi$ of type ADE has dimension vector equal to a root.  The categories $\scrC$ are either controlled by algebras of the form $e_\scrI\Uppi e_\scrI$, or by contraction algebras, depending on the setting. The underlying combinatorial structure of these are \emph{restricted roots}, recalled in \S\ref{subsec:arrangements} and illustrated in \ref{ex: Dynkin res}.

\begin{cor}[\ref{ePie dim vectors}]
Consider the projective algebra $\Uppi$ of type \textnormal{ADE}, and $\scrI\subseteq\Updelta$.  Then every brick in $\fmod e_\scrI\Uppi e_\scrI$ has dimension vector equal to a primitive restricted root. 
\end{cor}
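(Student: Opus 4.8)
The plan is to obtain this as a direct corollary of \ref{cor 2 3folds intro}(2), by translating its conclusion into the language of dimension vectors. First I would recall from \S\ref{sec: bij MMP} that, for the chamber $\chamC$ associated to the subset $\scrI\subseteq\Updelta$, the category $\scrC=\scrC_\chamC$ is (equivalent to) the null category of the corresponding crepant partial resolution of $\bC^2/\Gamma$, that its standard heart is $\fmod e_\scrI\Uppi e_\scrI$, and that the simples $\scrS_1,\dots,\scrS_n$ of \S\ref{subsec: simples} are the vertex simple modules of $e_\scrI\Uppi e_\scrI$. Consequently a brick $M$ in $\fmod e_\scrI\Uppi e_\scrI$, regarded as an object of $\scrC$ concentrated in homological degree zero, satisfies $\Hom_\scrC(M,M[i])=0$ for all $i<0$ (a general property of objects of the heart of a t-structure) and $\Hom_\scrC(M,M)=\End_{e_\scrI\Uppi e_\scrI}(M)=\bC$.

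Next I would apply \ref{cor 2 3folds intro}(2) to $M$: this produces a path $\upgamma\colon\chamC\to\mathsf{E}$ in the Deligne groupoid and an index $i$ with $\Phi_\upgamma(M)\cong\scrS_i$ in $\scrC_{\mathsf{E}}$. Passing to Grothendieck groups, $\Phi_\upgamma$ induces an isomorphism $K_0(\scrC_\chamC)\xrightarrow{\sim}K_0(\scrC_{\mathsf{E}})$, and under the identifications set up in \S\ref{subsec:arrangements} this is precisely the corresponding composite of reflections in the restricted Weyl groupoid, acting on the lattices of restricted roots. Because $M$ and $\scrS_i$ are genuine modules (not merely isomorphic up to shift), this map carries $[M]$ to $[\scrS_i]$ on the nose, with no sign ambiguity; since $[\scrS_i]$ is a simple restricted root, in particular primitive, and the restricted Weyl groupoid acts by lattice isomorphisms permuting the primitive restricted roots among themselves, it follows that $[M]=\Phi_\upgamma^{-1}([\scrS_i])$ is a primitive restricted root. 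Identifying $K_0(\scrC_\chamC)\cong\bZ^{|\scrI|}$ with the dimension-vector lattice gives $\underline{\dim}\,M=[M]$, which is the assertion.

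The one place where care is needed, and which I would isolate as a preliminary lemma in \S\ref{subsec:arrangements}, is the compatibility used above: that the $K_0$-level action of the mutation functors $\Phi_\upgamma$ agrees with the reflection generators of the restricted Weyl groupoid, and that the classes of the simples $\scrS_j$ are exactly the simple restricted roots. Granting this dictionary --- which is implicit in the bijection of \S\ref{sec: bij MMP} and the combinatorics recalled in \ref{ex: Dynkin res} --- the argument is immediate, so the real work has already been done in \ref{make module} and \ref{cor 2 3folds intro}. It is perhaps worth remarking that, unlike Crawley--Boevey's original argument for $\Uppi$ itself, this proof never touches a geometric model of a representation variety: it is a formal consequence of the classification of no-negative-Ext objects in $\scrC$.
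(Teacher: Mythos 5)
Your argument is essentially the paper's: it likewise applies \ref{cor 2 3folds}\eqref{cor 2 3folds 2} to the brick (viewed in the standard heart, so with no negative self-Exts and endomorphism ring $\mathbb{C}$) and then passes to $\mathrm{K}_0$, and the ``dictionary'' you isolate as the one point needing care is exactly what the paper imports by citation to \cite[\S5.1, Rem 5.12]{NW}, namely that the induced maps on $\mathrm{K}_0$ are compositions of matrices each of which sends primitive restricted roots to primitive restricted roots, while the classes of the simples are the standard basis vectors. One small correction: since the intersection arrangement need not be Coxeter, these wall-crossing matrices are not in general reflections, so your phrase about a ``composite of reflections in the restricted Weyl groupoid'' should be replaced by the weaker property you actually use, that each wall crossing acts on $\mathrm{K}_0$ by a lattice isomorphism preserving the set of primitive restricted roots.
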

A version of the above is proved in \ref{cor contraction brick dim} for contraction algebras $\Lambda_{\con}$, which are recalled in \S\ref{subsec: cont alg}.  These algebras control much of the birational geometry of $3$-folds, and their representation theory is usually wild.

\subsection{Silting Discrete Derived Categories}\label{sec: intro gen}
The above techniques extend further.  Recall that a finite dimensional algebra $A$ is called silting discrete if the category $\Kb(\proj A)$ is a silting discrete triangulated category (see \S\ref{sec: silting prelims}).

This class of silting discrete finite dimensional algebras is surprisingly rich: it contains all local algebras, Erdmann's algebras of dihedral, semidihedral and quaternion type, various preprojective algebras and Brauer graph algebras, together with more obvious representation-finite examples (see e.g.\ \cite[p1]{AH}).  From the viewpoint of this paper the important point is that the full list of examples also includes all $3$-fold contraction algebras $\Lambda_{\con}$. The resulting categories $\Db(\Lambda_{\con})$ are related to, but are very different from, the categories $\scrC$ in the previous sections.  

The analogy between $\scrC$ and $\Db(\Lambda_{\con})$ allows us to push the previous techniques into the general silting discrete setting. The following is new in all cases.

\begin{thm}[\ref{thm: silting main}]\label{thm: silting main intro}
If $A$ is silting discrete, $\scrT=\Db(A)$ and $x\in\scrT$, then the following statements are equivalent.
\begin{enumerate}
\item $\Hom_\scrT(x,x[i])=0$ for all $i<0$.
\item $x$ belongs to the heart of a bounded t-structure.
\end{enumerate}
\end{thm}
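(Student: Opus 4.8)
The implication $(2)\Rightarrow(1)$ is formal and uses no discreteness hypothesis: if $x$ lies in the heart $\scrA$ of a bounded t-structure $(\scrT^{\le 0},\scrT^{\ge 0})$, then $x\in\scrT^{\le 0}$ while, for $i<0$, $x[i]\in\scrT^{\ge -i}\subseteq\scrT^{\ge 1}$, so $\Hom_\scrT(x,x[i])\subseteq\Hom_\scrT(\scrT^{\le 0},\scrT^{\ge 1})=0$. All the content is in $(1)\Rightarrow(2)$, and here the plan is to transplant the proof of Theorem~\ref{make module}: the role played there by the mutation functors $\Phi_\upbeta$ (equivalently, by moving the perverse t-structure across the walls of the hyperplane arrangement) is played here by iterated irreducible silting mutation, viewed through the Koenig--Yang dictionary as simple tilts of the associated bounded t-structures.

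The first step is to record what silting-discreteness provides (see \S\ref{sec: silting prelims}): every bounded t-structure on $\scrT=\Db(A)$ has a length heart with finitely many simples, every such heart is obtained from the standard heart $\fmod A$ by a finite sequence of simple tilts, and simple tilt, simple-minded mutation and irreducible silting mutation all correspond under the Koenig--Yang bijections. This is the exact analogue of the finiteness of the simplicial arrangement in Theorem~\ref{make module}: it endows the collection of bounded t-structures with enough combinatorial rigidity to run a ``maximal element'' argument, and it guarantees that the hearts produced below really are hearts of bounded t-structures.

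Now take $x$ with $\Hom_\scrT(x,x[i])=0$ for all $i<0$ --- a condition on $x$ alone, independent of any t-structure. I would induct on the cohomological spread of $x$ relative to the standard t-structure, i.e.\ on $b-a$ where $[a,b]$ is the interval with $H^i_{\fmod A}(x)\neq 0\iff i\in[a,b]$; this is finite since $x\in\Db(A)$. If $a=b$ then $x\cong H^a(x)[-a]$ lies in $\fmod A[-a]$, the heart of the shifted standard t-structure, and we are done. If $a<b$, put $M\colonequals H^a_{\fmod A}(x)\neq 0$. Since $\fmod A$ is a length category with finitely many simples, one chooses --- by a maximality argument among the composition factors of $M$ with respect to a suitable partial order on the simples, exactly as a codimension-one wall is selected in Theorem~\ref{make module} --- a simple $S$, and performs the simple tilt at $S$ that raises the cohomological degree of the $S$-isotypic objects. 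The claim is that the new heart $\scrA'$, again the heart of a bounded t-structure by silting-discreteness, satisfies $H^{<a}_{\scrA'}(x)=0$, $H^{>b}_{\scrA'}(x)=0$ and $\ell\big(H^a_{\scrA'}(x)\big)<\ell(M)$. Iterating finitely often (an inner induction on $\ell(M)$) produces a bounded-t-structure heart $\scrA''$ for which $x$ has spread contained in $[a+1,b]$, strictly shorter than $[a,b]$; the whole argument now applies verbatim with $\scrA''$ in place of $\fmod A$, and the induction closes, placing $x$ in the heart of a bounded t-structure.

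The main obstacle is precisely this inductive step. That the tilt at a composition factor of $M$ strictly shrinks the bottom cohomology is its expected behaviour, and that nothing reappears below degree $a$ follows from $H^{a-1}_{\fmod A}(x)=0$; the delicate point is showing that the tilt does not simultaneously push part of the top cohomology $H^b_{\fmod A}(x)$ up into degree $b+1$. This is where the hypothesis $\Hom_\scrT(x,x[i])=0$ for $i<0$ is essential --- as in the proof of Theorem~\ref{make module}, were the chosen simple $S$ to occur in the ``wrong'' position in both $H^a(x)$ and $H^b(x)$ one could assemble a nonzero morphism $x\to x[a-b]$ with $a-b<0$, a contradiction --- and where the maximality of $S$ in the finite poset of simples is exactly what guarantees that an admissible $S$ exists. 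The remaining ingredients (compatibility with the Koenig--Yang dictionary, length and finiteness of the terminal heart, and the fact that shifts of bounded t-structures are bounded t-structures) are routine given \S\ref{sec: silting prelims}.
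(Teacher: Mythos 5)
Your overall strategy is the right one, and you correctly isolate the delicate point (protecting the top cohomology, which in the paper is \ref{lem 6.10}\,(2) combined with \ref{magic lemma Db 2} and the hypothesis $\Hom_\scrT(x,x[i])=0$ for $i<0$). But there are two genuine gaps. First, a local one: the simple you tilt at must be a simple \emph{submodule} $S\hookrightarrow \H^a(x)$ of the bottom cohomology, not merely a composition factor. The vanishing $\Hom(\H^b(x),\H^a(x))=0$ supplied by \ref{magic lemma Db 2} only passes to $\Hom(\H^b(x),S)=0$ when $S$ sits in the socle of $\H^a(x)$; for a general composition factor this fails, and then nothing prevents the tilt from pushing $\H^b$ into degree $b+1$.

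Second, and more seriously, your termination mechanism is not established. You propose an inner induction on $\ell(\H^a(x))$, asserting that each tilt strictly shrinks the bottom cohomology; this is unproven, and lengths are being measured in different hearts at each stage, so they are not even directly comparable. The paper does not track lengths at all. Instead, it works in the poset
\[
\Updelta=\{\scrW\in\smc\Db(A)\mid \scrU\in[0,1]_{\scrW}\ \text{and}\ x\in[a,b]_{\scrW}\},
\]
which is \emph{finite} precisely because of the definition of silting discrete (\ref{def: 5.5}), picks a minimal element $\scrV$, and shows that if $\H^a_\scrV(x)\neq 0$ then the left mutation $\upnu_i\scrV$ at a simple submodule of $\H^a_\scrV(x)$ would again lie in $\Updelta$, contradicting minimality. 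Note the window condition $\scrU\in[0,1]_{\scrW}$ is essential: it is what makes silting discreteness applicable (the set of \emph{all} smcs is infinite, e.g.\ all shifts), and preserving it under mutation requires the additional vanishing $\Hom(\H^1_{\scrV}(\scrU),S)=0$, which the paper extracts from \ref{magic lemma Db 2} together with $\Hom(\scrU,x[a-1])=0$ (i.e.\ $x\in[a,b]_{\scrU}$). Your proposal never maintains any such condition relative to the starting smc, so the finiteness you invoke from silting discreteness is not actually available to stop the iteration; the ``maximality among composition factors with respect to a partial order on the simples'' has no counterpart in the paper's argument (there the extremal element is taken in the poset of smcs, not of simples) and does not by itself yield termination. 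Repairing the proof essentially forces you back to the paper's finite-poset-of-smcs argument.
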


In this level of generality, there are fewer autoequivalences, so the strategy for the proof of \ref{thm: silting main intro} minimises the cohomology with respect to the bounded t-structures on the fixed~$\scrT$. This avoids groupoids.  The object $x$ has cohomology lying within some bounded region $[a,b]$ with respect to the standard t-structure.  The proof first finds a new t-structure whose cohomology functors applied to $x$ lie in a strictly smaller region $[a+1,b]$.  A simple induction argument then shows that there is a bounded t-structure such that $x[b]$ belongs to its heart.

The following are immediate consequences, which may be of independent interest.  The definition of brick and semibrick complexes is recalled in \ref{dfn: semibrick complex}.

\begin{cor}[\ref{cor: silting simples}]\label{thm: silting simples main intro}
Let $A$ be silting discrete, $\scrT\colonequals\Db(A)$ and $x\in\scrT$.
\begin{enumerate}
\item $x$ is a sum of simples in the heart of a bounded t-structure iff $x$ is a semibrick.
\item $x$ is a simple in the heart of a bounded t-structure iff $x$ is a brick.
\end{enumerate}
In particular, if $x\in\scrT$ is a semibrick complex, then there exists a simple minded collection~$\scrU$ such that $x$ is contained in $\scrU$. 
\end{cor}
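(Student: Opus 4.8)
Let me plan the proof of the final statement: if $x\in\scrT=\Db(A)$ is a semibrick complex, then $x$ can be completed to a simple minded collection. The plan is to combine Theorem \ref{thm: silting main intro} (objects with no negative self-Exts lie in a heart) with parts (1) and (2) of the same corollary, together with the basic structure theory of simple minded collections in the silting discrete setting. First I would recall that a semibrick complex is, by definition (\ref{dfn: semibrick complex}), a finite collection $\{x_1,\dots,x_k\}$ of bricks with $\Hom_\scrT(x_i,x_j[\le 0])=0$ for $i\ne j$; equivalently, writing $x=\bigoplus_i x_i$, the object $x$ satisfies $\Hom_\scrT(x,x[i])=0$ for all $i<0$ together with $\Hom_\scrT(x_i,x_j)=\mathbb{C}\updelta_{ij}$ on indecomposable summands. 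By part (1) of the corollary (which is itself deduced from Theorem \ref{thm: silting main intro}), such an $x$ is a direct sum of pairwise non-isomorphic simples $\scrS_{i_1},\dots,\scrS_{i_k}$ in the heart $\scrH$ of some bounded t-structure on $\scrT$.

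The second step is to upgrade ``some simples in a heart'' to ``a sub-collection of a simple minded collection''. Here I would invoke the equivalence, valid for silting discrete algebras, between bounded t-structures, their hearts, and simple minded collections: since $A$ is silting discrete, every bounded t-structure on $\scrT$ has finite length heart with finitely many simples (this is the $\scrT=\Db(A)$ analogue of Corollary \ref{cor: no exotic}, and is part of the structure theory recalled in \S\ref{sec: silting prelims}), and the full set of simples of such a heart $\scrH$ is precisely a simple minded collection $\scrU=\{\scrS_1,\dots,\scrS_n\}$. Thus $\{x_1,\dots,x_k\}=\{\scrS_{i_1},\dots,\scrS_{i_k}\}$ is literally a subset of the simple minded collection $\scrU$ attached to the heart $\scrH$ produced in the first step. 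This proves the ``in particular'' clause, and parts (1) and (2) then fall out: a semibrick is exactly a sum of a subset of the simples of some heart, and a single brick is exactly one simple of some heart — the reverse implications being immediate since simples in a finite length heart are bricks and pairwise Hom-orthogonal in non-positive degrees.

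The main obstacle I anticipate is not the logical skeleton above but ensuring that the heart produced by Theorem \ref{thm: silting main intro} genuinely has the semibrick $\{x_i\}$ among its \emph{simple} objects, rather than merely containing the $x_i$ as (possibly decomposable or non-simple) objects of the heart. Theorem \ref{thm: silting main intro} as stated only places $x$ in \emph{some} heart; one must then argue that, after possibly passing to a further heart (via the mutation/silting machinery), each brick summand $x_i$ becomes simple. This is precisely the content that part (1) of the corollary must supply, so in the write-up I would either cite part (1) as already established, or, if proving everything from Theorem \ref{thm: silting main intro} directly, run the minimisation argument of that theorem a little more carefully: having placed $x$ in a heart $\scrH$, use that $x$ is Hom-orthogonal with one-dimensional endomorphisms on summands to peel off each $x_i$ as a simple, possibly invoking the finite-length property of $\scrH$ and a Jordan–Hölder/subobject argument to replace $\scrH$ by an HRS-type tilt in which the $x_i$ are simple. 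Once that point is secured, the completion to a simple minded collection is automatic because a finite length heart has, by definition, a finite complete set of simples, and adding the remaining simples $\scrS_j$ ($j\notin\{i_1,\dots,i_k\}$) to $\{x_1,\dots,x_k\}$ yields the desired $\scrU$.
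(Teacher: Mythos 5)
Your skeleton matches the paper's: use \ref{thm: silting main} to place the semibrick complex $x$ in the heart $\scrH$ of some bounded t-structure, then complete it to a simple minded collection using the silting discrete structure theory. But the step you yourself flag as the obstacle is exactly the step you do not prove, and it cannot be discharged the way you suggest. Citing ``part (1)'' is circular, since part (1) (semibrick $\Leftrightarrow$ sum of simples in a heart) is precisely the nontrivial content of the statement; and the alternative you sketch --- ``peel off each $x_i$ as a simple'' via a Jordan--H\"older/subobject argument and an HRS-type tilt --- is not justified and is genuinely subtle. A brick lying in a finite length heart need not be simple there, and for a general finite dimensional algebra a semibrick in $\fmod\Gamma$ need not extend to a $2$-term simple minded collection (Asai's bijection requires left-finiteness, i.e.\ functorially finite torsion classes). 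So some input beyond ``finite length heart'' is required, and it is here that silting discreteness is actually used.

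The paper fills this gap with \ref{cor: complete if in heart}, whose proof has three ingredients you omit: (i) by \ref{realisation}, $\scrH\simeq\fmod\Gamma$ for a finite dimensional algebra $\Gamma$ which is moreover $\uptau$-tilting finite (this uses silting discreteness through \cite[Thm A]{David} and \cite[0.2]{IJY}); (ii) because $\Gamma$ is $\uptau$-tilting finite, every torsion class is functorially finite, so Asai's bijection $2\dsmc\Gamma\to\sbrick\Gamma$ applies and produces a $2$-term smc over $\Gamma$ containing the semibrick corresponding to $x$; (iii) the realisation functor transports this to an smc in $\Db(A)$ that is $2$-term with respect to $\scrH$, which requires checking (as in \ref{realisation}\eqref{realisation 2}) that Hom-spaces in degrees $\leq 1$ are preserved and that the image still thickly generates $\Db(A)$. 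Note also that the resulting smc is $2$-term with respect to $\scrH$ but typically not with respect to the standard heart (the paper's \ref{ex: no contra} and \ref{ex: no contra 2} exist precisely to illustrate this), so no shortcut identifying $x$ with simples of the heart produced by \ref{thm: silting main} itself can work. Your final assembly of parts (1), (2) and the ``in particular'' clause from the completion statement, and the easy converse directions, are fine once this missing lemma is supplied.
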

It follows in \ref{cor: semi of maximal rank} that for silting discrete algebras, every semibrick pair forms a subset of a simple minded collection, and further that every semibrick pair of maximal rank is itself a simple minded collection.  The slightly subtle point is that in general the simple minded collection containing the semibrick pair need not be 2-term with respect to the standard t-structure, which explains why \ref{thm: silting simples main intro} does not contradict \cite{HI, BH}.  We give some explicit examples in \ref{ex: no contra} and \ref{ex: no contra 2} which illustrate this phenomenon.

\medskip
The above general results apply to $3$-fold contraction algebras $\Lambda_{\con}$, where more can be said.  The autoequivalence group of $\Db(\Lambda_{\con})$ is large, and so the above simplifies. We show in \ref{cor: semibrick for cont alg} that for contraction algebras the brick and semibrick complexes can be characterised as orbits of the brick modules under the action of the autoequivalence group.  The result further improves if we instead pass to the groupoid, but details are left to \S\ref{subsec contraction cor}.

\subsection*{Acknowledgements}
The authors thank David Pauksztello for generously sharing his many insights into simple minded collections in silting discrete triangulated categories, and additionally for passionately communicating his love of spectral sequences.  M.W.\ also thanks Aidan Schofield, Ailsa Keating, Ivan Smith, Anthony Licata, Asilata Bapat, Anand Deopurkar and Matt Pressland for very helpful conversations, and ICERM for hosting the conference `Braids in Symplectic and Algebraic Geometry' in March 2022.

\subsection*{Conventions}
To be consistent between the algebraic and geometric settings considered, all objects will be taken over $\mathbb{C}$. Furthermore, since e.g.\ $\Hom(\H^i(\Phi_j(y)),\H^i(x))$ is slightly painful, brackets will often be dropped when applying functors to objects. Doing this, the above simplifies to $\Hom(\H^i(\Phi_jy),\H^ix)$.

\section{Preliminaries}

This section briefly recalls the known properties of the category $\scrC$, and sets notation.  This is largely a summary of \cite{HW2, IW9} suitable for our purposes. 

\subsection{Noncommutative Resolutions and Variants}\label{subsec: NC res} 
Since by definition $\scrC$ is a full subcategory of $\Db(\coh X)$, it is possible to take cohomology of objects in $\scrC$ with respect to the standard t-structure on $\Db(\coh X)$.  These cohomology sheaves were used heavily e.g.\ in \cite{IU}.  However, as in \cite[\S6]{SW} it turns out to be easier to control the cohomology with respect to a different t-structure, namely perverse sheaves. 

Throughout this paper we will abuse notation, and write $f$ for either the minimal resolution $ \scrZ\to\mathbb{C}^2/\Gamma$, a fixed partial crepant resolution $\scrY\to\mathbb{C}^2/\Gamma$, or a fixed $3$-fold flopping contraction $\scrX\to\Spec\scrR$ where $\scrX$ has only Gorenstein terminal singularities and $\scrR$ is complete local.  It is well known that the reduced fibre above the unique singular point is a tree of curves, with each irreducible component $\Curve_i\cong\mathbb{P}^1$. Furthermore, $X$ will denote the choice of fixed $\scrZ$, $\scrY$ or $\scrX$, and from that choice the category $\scrC$ is defined to be
\[
\scrC= \{ a\in\Db(\coh X)\mid \mathbf{R}f_*a=0\}.
\]
By \cite{VdB} there exists a tilting bundle $\scrO\oplus \scrN$ on $X$, where $\scrN^*$ is generated by global sections, which induces an equivalence of categories
\begin{equation}
\Psi_X\colon\Db(\coh X)\xrightarrow{\RHom_X(\scrO\oplus\scrN,-)}\Db(\fmod\Lambda)\label{NCCR Db}
\end{equation}
where $\Lambda=\End_X(\scrO\oplus \scrN)$.  The algebra $\Lambda$ admits an idempotent $e$ corresponding to the summand $\scrO$, and across the equivalence \eqref{NCCR Db},  $\scrC$ corresponds to those complexes whose cohomology groups are all annihilated by $e$, or in other words, those complexes whose cohomology groups are $\Lambda/(e)$-modules.

\subsection{Intersection Arrangements}\label{subsec:arrangements}
To any $X$ equal to $\scrZ$, $\scrY$ or $\scrX$ in \S\ref{subsec: NC res}, it is possible to associate Dynkin data in the form of a subset $\scrI\subset \Updelta$ with $\Updelta$ ADE. 
\begin{enumerate}
\item For the minimal resolution $\scrZ\to\mathbb{C}^2/\Gamma$, $\Updelta$ is the ADE Dynkin diagram corresponding to $\Gamma$ via McKay Correspondence, and $\scrI=\emptyset$. 
\item For a partial crepant resolution $\scrY\to\mathbb{C}^2/\Gamma$, $\Updelta$ is the ADE Dynkin diagram corresponding to $\Gamma$ via McKay Correspondence, and $\scrI$ is the subset of curves that are contracted by the morphism $\scrZ\to\scrY$. 
\item For a flopping contraction $\scrX\to\Spec\scrR$, a generic hyperplane section slices to a partial crepant resolution as in (2) above \cite{Pagoda}, which thus associates some $\scrI\subset\Updelta$.
\end{enumerate}
\begin{rem}\label{rem: subset conventions}
The exceptional curves in $\scrZ$ will be written $\Curve_i$ with $i\in\Updelta$. 
For partial resolutions, the subset $\scrI$ is the choice of curves that get contracted from $\scrZ$.  We will abuse notation, and write $\Curve_i$ with $i\in\scrIc=\Updelta\setminus \scrI$ for the exceptional curves that survive, namely the curves in $\scrY$.  Further, we will also write $\Curve_i$ with $i\in\scrIc$ for the exceptional curves in $\scrX$.
\end{rem}
The ADE diagram $\Updelta$ has an associated Cartan $\mathfrak{h}=\bigoplus\mathbb{R}\upalpha_i$, where $\upalpha_i$ are the simple roots. For any $\scrI\subset\Updelta$, define $\mathfrak{h}_{\scrI}$ to be the quotient of $\mathfrak{h}$ by the subspace spanned by $\{ \upalpha_i \mid i \in \scrI \}$.  The  associated quotient map will be written
 \[
 \uppi_{\scrI} \colon \mathfrak{h} \to \mathfrak{h}_{\scrI},
 \] 
and note that $\mathfrak{h}_{\scrI}$ has basis $\{ \uppi_{\scrI}(\upalpha_i) \mid i \notin \scrI\}=\{ \uppi_{\scrI}(\upalpha_j)\mid j\in \scrIc\}$. The \emph{restricted positive roots} in $\mathfrak{h}_\scrI$ are precisely the non-zero images of positive roots under $\uppi_{\scrI}$.  A restricted positive root is \emph{primitive} if it is not a multiple of another positive restricted root.

\begin{example}\label{ex: Dynkin res}
Consider the Dynkin data $\,\Dfive{B}{P}{B}{P}{B}\,$, where $\scrI$ is the set of black nodes.  Projecting all twenty positive roots of $D_5$ via $\uppi_\scrI$, and discounting the zeros, gives the set
\[
\{  10, 01, 11, 21, 22 \}
\] 
where for example $11$ is shorthand for the coordinate $(1,1)$ in $\mathfrak{h}_\scrI$ under the prescribed basis above. These are the restricted roots.  The primitive restricted roots are $\{  10, 01, 11, 21 \}$.
\end{example}

Now write $\Uptheta_{\scrI}$ for the dual of the real vector space $\mathfrak{h}_\scrI$.  For each restricted positive root $0\neq\upbeta=\uppi_\scrI(\upalpha)\in \mathfrak{h}_\scrI$ consider the dual hyperplane
\[ 
\Hyp_\upbeta\colonequals \{ (\upvartheta_i) \mid \textstyle\sum\upbeta_i\upvartheta_i=0 \} \subseteq \Uptheta_{\scrI}. 
\]
Since there are only finitely many restricted roots, the collection of $\Hyp_\upbeta$ forms a finite hyperplane arrangement in $\Uptheta_{\scrI}$, which we refer to as the \emph{intersection arrangement}.  In general, this need not be Coxeter.  

\begin{example}\label{ex: Dynkin res 2}
Continuing \ref{ex: Dynkin res}, for example the restricted root $21$ gives the dual hyperplane $2x+y=0$.  The full intersection arrangement is the following.
\[
\begin{array}{cccc}
\begin{array}{c}
\begin{tikzpicture}[scale=0.5]
\draw[->,densely dotted] (180:2cm)--(0:2cm);
\node at (0:2.5) {$\scriptstyle x$};
\draw[->,densely dotted] (-90:2cm)--(90:2cm);
\node at (90:2.5) {$\scriptstyle y$};
\end{tikzpicture}
\end{array}
&
\begin{array}{c}
\begin{tikzpicture}[scale=1]
\draw[line width=\mythick mm,Pink] (180:2cm)--(0:2cm);
\draw[line width=\mythick mm,Green] (135:2cm)--(-45:2cm);
\draw[line width=\mythick mm, Blue] (116.57:2cm)--(-63.43:2cm);
\draw[line width=\mythick mm,Pink] (90:2cm)--(-90:2cm);
\end{tikzpicture}
\end{array}&
\begin{array}{c}
\begin{tabular}{ccc}
\toprule
Restricted Root&\\
\midrule
$01$&$\tikz\draw[line width=\mythick mm, Pink] (0,0) -- (0.25,0);$\\
$11, 22$&$\tikz\draw[line width=\mythick mm, Green] (0,0) -- (0.25,0);$\\
$21$&$\tikz\draw[line width=\mythick mm, Blue] (0,0) -- (0.25,0);$\\
$10$&$\tikz\draw[line width=\mythick mm, Pink] (0,-0.15) -- (0,0.15);$\\
\bottomrule
\end{tabular}
\end{array}
\end{array}
\]
\end{example}

\subsection{Deligne Groupoid}\label{sec: Deligne}

Write $\scrH$ for the arrangement obtained from the Dynkin data $\scrI\subset\Updelta$ as defined in \S\ref{subsec:arrangements}, which is a finite simplicial hyperplane arrangement.  This subsection summarises well-known constructions associated to $\scrH$.

\begin{dfn} \label{def:Gamma graph}
Consider the graph $\Gamma_{\scrH}$ of oriented arrows, which has as vertices the chambers (i.e.\ the connected components) of $\bR^n\backslash\scrH$, and there is a unique arrow $a \colon  \chamC\to \chamD$ from chamber $\chamC$ to chamber $\chamD$ if the chambers are adjacent, otherwise there is no arrow. For an arrow $a\colon  \chamC\to \chamD$, write $s(a)\colonequals  \chamC$ and $t(a)\colonequals  \chamD$.
\end{dfn}
By definition, if there is an arrow $a \colon  \chamC\to \chamD$, there is a unique arrow $b\colon  \chamD\to \chamC$. 

\begin{dfn}
Given $\chamC$, then a chamber $\mathsf{op}(\chamC)$ is said to be {\it opposite} $\chamC$ if there is a line in $\mathbb{R}^n$ passing through $\chamC$, $\mathsf{op}(\chamC)$, and the origin. The opposite chamber $\mathsf{op}(\chamC)$ is unique.
\end{dfn}

As for quivers, a \emph{positive path of length~$n$} in $\Gamma_{\scrH}$ is defined to be a formal symbol
\[
p=a_n\circ \hdots\circ a_2\circ a_1,
\]
 whenever there exists a sequence of vertices $v_0,\hdots,v_n$ of $\Gamma_{\scrH}$ and arrows $a_i\colon v_{i-1}\to v_i$ in $\Gamma_{\scrH}$. Set $s(p)\colonequals  v_0$ and $t(p)\colonequals  v_n$, and write $p\colon s(p)\to t(p)$. The notation $\circ$ is composition, but we will often drop the $\circ$'s in future.  If 
$q=b_m\circ\hdots\circ b_2 \circ b_1$ is another positive path with $t(p)=s(q)$, consider the formal symbol
\[
q\circ p\colonequals  b_m\circ\hdots\circ b_2 \circ b_1
\circ
a_n\circ \hdots\circ a_2\circ a_1,
\]
and call it the composition of $p$ and $q$.

\begin{dfn}
A positive path is called \emph{reduced} if it does not cross any hyperplane twice. 
\end{dfn}
Since $\scrH$ is finite, reduced positive paths coincide with shortest positive paths \cite[4.2]{Paris}, in the sense that there is no positive path in $\Gamma_{\scrH}$ of smaller length, with the same endpoints.

\begin{exa}\label{ex: 6 cham 1}
Consider the following hyperplane arrangement $\scrH$ with $6$ chambers, and with chamber $\chamC$ marked.  Then $\mathsf{op}(\chamC)$, and all reduced paths leaving $\chamC$, are illustrated below.
\[
\begin{tikzpicture}[scale=0.75,bend angle=15, looseness=1,>=stealth]
\coordinate (A1) at (135:2cm);
\coordinate (A2) at (-45:2cm);
\coordinate (B1) at (153.435:2cm);
\coordinate (B2) at (-26.565:2cm);
\draw[line width=\mythick mm,black!30] (A1) -- (A2);
\draw[line width=\mythick mm,black!30] (-2,0)--(2,0);
\draw[line width=\mythick mm,black!30] (0,-1.8)--(0,1.8);
\draw[->] ([shift=(108:1.4cm)]0,0) arc (108:45:1.4cm);
\draw[->] ([shift=(108:1.55cm)]0,0) arc (108:-22.5:1.55cm);
\draw[->] ([shift=(108:1.7cm)]0,0) arc (108:-66:1.7cm);
\draw[->] ([shift=(111:1.4cm)]0,0) arc (111:157.5:1.4cm);
\draw[->] ([shift=(111:1.55cm)]0,0) arc (111.5:225:1.55cm);
\draw[->] ([shift=(111:1.7cm)]0,0) arc (111.5:291:1.7cm);

\node at (110:2cm) {$\scriptstyle \chamC$};
\node at (-70:2cm) {$\scriptstyle \mathsf{op}(\chamC)$};
\end{tikzpicture}
\]
\end{exa}

As in \cite[p7]{Delucchi}, let $\sim$ denote the smallest equivalence relation, compatible with composition, that identifies all morphisms that arise as positive reduced paths with the same source and target. Write $\mathrm{Free}(\Gamma_\scrH)$ for the free category on the graph $\Gamma_\scrH$, where morphisms are directed paths.  The quotient category 
\[
\scrG^{+}_\scrH \colonequals \mathrm{Free}(\Gamma_\scrH)/\sim, 
\]
is called the category of positive paths. 

\begin{dfn}
The elements of $\scrG^{+}_\scrH$ which are reduced are called \emph{atoms}.  That is, atoms are the (now unique) shortest positive paths between chambers.  
\end{dfn}

The set of atoms beginning from a fixed chamber admits a partial order.  
\begin{dfn}\label{dfn: atoms order}
If $\upalpha,\upbeta$ are atoms that both begin at $\chamC$, write $\upalpha\geq \upbeta$ if there exists an atom $\upgamma$ such that $\upalpha=\upgamma\circ\upbeta$. 
\end{dfn}
The identity is minimal, and any atom $\ell\colon\chamC\to\mathsf{op}(\chamC)$ is the maximal element.  This convention is chosen to match the Bruhat order, but has the rather unfortunate consequence of being opposite to the order on tilting and silting complexes. This explains why maximal elements in \S\ref{sec: classification} switch to minimal elements in \S\ref{sec: silting section}.

\begin{dfn} \label{def: Deligne groupoid completion}
The \emph{arrangement (=Deligne) groupoid}  $\scrG_{\scrH}$ is the groupoid defined as the groupoid completion of  $\scrG_{\scrH}^{+}$, that is, a formal inverse is added for every morphism in $\scrG_{\scrH}^{+}$. 
\end{dfn} 

Paths in the groupoid $\scrG_{\scrH}$ can be positive, or negative, or mixed.

\subsection{Categorical Representations and HomMMP}\label{sec: bij MMP}
In each of the settings $X= \scrX, \scrY$ or $\scrZ$ of \S\ref{subsec: NC res}, it is possible to produce a categorical representation of the Deligne groupoid by associating categories to each chamber, and certain equivalences to the wall crossings.  This subsection summarises this construction, mainly to set notation.

Below consider the preprojective algebra $\Uppi$ of an extended ADE Dynkin diagram $\Updelta^{\mathsf{aff}}$.  By convention the vertices will be labelled $i=0,1,\hdots,n$, with $0$ being the extended vertex, and thus the vertex idempotents are labelled $e_0,\hdots,e_n$. For a subset $\scrI\subset\Updelta\subset\Updelta^{\mathsf{aff}}$,  to match the notation in \cite{IW9} set
\begin{equation}
e_\scrI=1-\sum_{i\in\scrI}e_i\label{eqn: idempotent convention}
\end{equation}
and consider the associated contracted preprojective algebra $e_\scrI \Uppi e_\scrI$.  The notation creates one ambiguity, namely that $e_{\{ i\}}$ and $e_i$ are different, since $e_{\{i\}}=1-e_i$. 

To each geometric setting (1)--(3), \S\ref{subsec:arrangements} assigns  a subset $\scrI\subset\Updelta$ and thus a corresponding intersection arrangement $\scrH$. 

\begin{enumerate}
\item For the minimal resolution $\scrZ\to\mathbb{C}^2/\Gamma$, consider the preprojective algebra $\Uppi$ associated to $\Updelta^{\mathsf{aff}}$.  To every chamber $\chamC$ assign the following subcategory of $\Db(\fmod\Uppi)$
\[
\scrC_\chamC\colonequals\{ a\in\Db(\fmod\Uppi)\mid e_0\,\H^*(a)=0 \}
\]
where as above $e_0$ is the idempotent corresponding to the extended vertex.
Note that $\scrC_\chamC$ does not depend on $\chamC$. To each simple wall crossing $s_i\colon\chamC\to\chamD$ with $i=1,\hdots,n$, is assigned the mutation functor $\Phi_i\cong\RHom_\Uppi(\mathrm{I}_i,-)$ where $\mathrm{I}_i$ is the bimodule kernel of the map $\Uppi\to\Uppi/(e_i)$.

\item For a partial crepant resolution $\scrY\to\mathbb{C}^2/\Gamma$, again consider the preprojective algebra $\Uppi$ associated to $\Updelta^{\mathsf{aff}}$.  Now chambers in $\scrH$ are indexed by certain pairs $(x,\scrJ)$ where $x$ belongs to the Weyl group of $\Updelta$, and $\scrJ\subset\Updelta$ \cite[1.12]{IW9}.  To a chamber $\chamC=(x,\scrJ)$  assign the subcategory of $\Db(\fmod e_\scrJ\Uppi e_\scrJ)$ defined to be
\[
\scrC_\chamC\colonequals\{ a\in\Db(\fmod e_\scrJ\Uppi e_\scrJ)\mid e_0\,\H^*(a)=0 \}
\]
where again $e_0$ is the idempotent corresponding to the extended vertex.  To each simple wall crossing $\upomega_i\colon\chamC\to\chamD$ is an associated mutation functor $\Phi_i$, defined generally in \cite[\S6]{IW1}, and in the setting here in \cite[\S5.6]{IW9}.

\item For a flopping contraction $\scrX\to\Spec\scrR$, by \cite{HomMMP, IW9} there is a bijection $\chamC\mapsto M_\chamC$ between chambers in $\scrH$ and certain rigid objects in the category of Cohen--Macaulay modules $\CM\scrR$.  Necessarily $M_\chamC$ has $\scrR$ as a summand. Thus to a chamber $\chamC$ set $\Lambda_\chamC\colonequals \End_\scrR(M_\chamC)$, and assign the subcategory 
\[
\scrC_\chamC\colonequals\{ a\in\Db(\fmod\Lambda_\chamC)\mid e\,\H^*(a)=0 \}
\]
where $e$ is the idempotent corresponding to the summand $\scrR$.
To each simple wall crossing $s_i\colon\chamC\to\chamD$, assign the mutation functor $\Phi_i\cong \RHom_{\Lambda_\chamC}(\Hom_\scrR(M_\chamC,M_\chamD),-)$.

\end{enumerate}

In each case (1)--(3) above, the category $\scrC_\chamC$ corresponds to the more geometric $\scrC$ defined in \eqref{eqn: define C intro} across the derived equivalence $\Uppsi$ in \eqref{NCCR Db}.  The above assignments generate a groupoid $\mathds{G}$, since all functors are equivalences, and in all cases (1)--(3) there is a homomorphism
\begin{equation}
\scrG_{\scrH}\to\mathds{G}.\label{eqn: Deligne rep}
\end{equation}
In the case of the minimal resolution, and in the notation here, this homomorphism was established in \cite[6.6]{IR} (but really first in \cite{ST}), for partial resolutions this is \cite[5.30]{IW9}, and for $3$-fold flopping contractions the homomorphism is \cite[3.22]{DW3}.  
\begin{nota}\label{Phi alpha}
For $\upalpha\in\scrG_{\scrH}$, write $\Phi_\upalpha$ for the image of $\upalpha$ under \eqref{eqn: Deligne rep}.
\end{nota}
From \eqref{eqn: Deligne rep}, it is immediate that there is an induced homomorphism from the vertex group of the Deligne groupoid to $\Auteq\scrC_\chamC$ for any fixed $\scrC_\chamC$.  Since the vertex group is always $\uppi_1(\mathbb{C}^n\backslash \scrH_\mathbb{C})$ \cite{Deligne, Paris,  Salvetti}, it follows that there is an induced homomorphism
\begin{equation}
\uppi_1(\mathbb{C}^n\backslash \scrH_\mathbb{C})\to\Auteq\scrC_\chamC.\label{eqn: Deligne ver}
\end{equation}
This is known as the pure braid group action.  Write $\PBr$ for the image of this homomophism.

\begin{exa}\label{ex: 6 cham 2}
Continuing \ref{ex: 6 cham 1}, label the six chambers as in the left hand side of the following diagram.  The right hand side contains the categorical representation, where each chamber has been replaced by a category, and each wall crossing by a mutation functor.
\[
\begin{array}{ccc}
\begin{array}{c}
\begin{tikzpicture}[scale=1.3,bend angle=15, looseness=1,>=stealth]
\coordinate (A1) at (135:2cm);
\coordinate (A2) at (-45:2cm);
\coordinate (B1) at (153.435:2cm);
\coordinate (B2) at (-26.565:2cm);
\draw[line width=\mythick mm,black!30] (A1) -- (A2);
\draw[line width=\mythick mm,black!30] (-2,0)--(2,0);
\draw[line width=\mythick mm,black!30] (0,-1.8)--(0,1.8);
\node (C+) at (45:1.5cm) {$\scriptstyle \chamC_+$};
\node (C1) at (112.5:1.5cm) {$\scriptstyle\chamC_1$};
\node (C2) at (157.5:1.5cm){$\scriptstyle\chamC_{12}$}; 
\node (C-) at (225:1.5cm) {$\scriptstyle\chamC_{121}$}; 
\node (C4) at (-67.5:1.5cm) {$\scriptstyle\chamC_{21}$}; 
\node (C5) at (-22.5:1.5cm) {$\scriptstyle\chamC_{2}$}; 
\end{tikzpicture}
\end{array}
&&
\begin{array}{c}
\begin{tikzpicture}[scale=1.3,bend angle=15, looseness=1,>=stealth]
\coordinate (A1) at (135:2cm);
\coordinate (A2) at (-45:2cm);
\coordinate (B1) at (153.435:2cm);
\coordinate (B2) at (-26.565:2cm);
\draw[black!30] (A1) -- (A2);
\draw[black!30] (-2,0)--(2,0);
\draw[black!30] (0,-1.8)--(0,1.8);
\node (C+) at (45:1.5cm) {$\scriptstyle\scrC_{+}$};
\node (C1) at (112.5:1.5cm) {$\scriptstyle\scrC_{1}$};
\node (C2) at (157.5:1.5cm){$\scriptstyle\scrC_{{12}}$}; 
\node (C-) at (225:1.5cm) {$\scriptstyle\scrC_{{121}}$}; 
\node (C4) at (-67.5:1.5cm) {$\scriptstyle\scrC_{{21}}$}; 
\node (C5) at (-22.5:1.5cm) {$\scriptstyle\scrC_{2}$}; 
\draw[->, bend right]  (C+) to (C1);
\draw[->, bend right]  (C1) to (C+);
\draw[->, bend right]  (C1) to (C2);
\draw[->, bend right]  (C2) to (C1);
\draw[->, bend right]  (C2) to (C-);
\draw[->, bend right]  (C-) to (C2);
\draw[<-, bend right]  (C+) to  (C5);
\draw[<-, bend right]  (C5) to  (C+);
\draw[<-, bend right]  (C5) to  (C4);
\draw[<-, bend right]  (C4) to (C5);
\draw[<-, bend right]  (C4) to  (C-);
\draw[<-, bend right]  (C-) to (C4);
\node at (78.75:0.9cm) {$\scriptstyle \Phi_1$};
\node at (78.75:1.6cm) {$\scriptstyle \Phi_1$};
\node at (135:1.075cm) {$\scriptstyle \Phi_2$};
\node at (135:1.7cm) {$\scriptstyle \Phi_2$};
\node at (198:0.9cm) {$\scriptstyle \Phi_1$};
\node at (198:1.6cm) {$\scriptstyle \Phi_1$};
\node at (258.75:0.9cm) {$\scriptstyle \Phi_2$};
\node at (258.75:1.6cm) {$\scriptstyle \Phi_2$};
\node at (315:1cm) {$\scriptstyle \Phi_1$};
\node at (315:1.75cm) {$\scriptstyle \Phi_1$};
\node at (8:0.95cm) {$\scriptstyle \Phi_2$};
\node at (8:1.6cm) {$\scriptstyle \Phi_2$};
\end{tikzpicture}
\end{array}
\end{array}
\]
The precise definition of the categories $\scrC_\chamC$ and the mutation functors depends on the setting (1)--(3).  In this example, the existence of the homomorphism \eqref{eqn: Deligne rep} is simply the statement that there is a functorial isomorphism $\Phi_1\Phi_2\Phi_1\cong\Phi_2\Phi_1\Phi_2$ whenever that makes sense.  
\end{exa}

In general, higher length braid relations hold between the mutation functors \cite{DW3}.

\begin{rem}\label{rem: calibration}
The following calibration is important.  In the setting of the minimal resolution (1), $\Phi_i$ is functorially isomorphic to the spherical twist of \cite{ST} around the sheaf $\scrO_{\Curve_i}(-1)$. In the setting (3), $\Phi_i$ is functorially isomorphic to the inverse of the Bridgeland--Chen flop functor \cite{B02, Chen}, and further $\Phi_i\Phi_i$ is the twist functor around the NC deformations of $\scrO_{\Curve_i}(-1)$ \cite{DW1, DW3}.  When $\scrO_{\Curve_i}(-1)$ is spherical, namely $\Curve_i$ is a $(-1,-1)$-curve, then $\Phi_i\Phi_i$ is the standard spherical twist.  In particular, in the $3$-fold setting the functor $\Phi_i$ should be thought of as the square root of twist functors.
\end{rem}

\begin{rem}
In the case of the minimal resolution $\scrZ\to\mathbb{C}^2/\Gamma$, the categories in each chamber are equal, not just equivalent.  As such, they can be identified, using the Weyl group $W$, and so \eqref{eqn: Deligne ver} can be improved to a homomorphism
\[
\uppi_1\big( (\mathbb{C}^n\backslash \scrH_\mathbb{C})/W \big)\to\Auteq\scrC_\chamC.
\]
This is precisely the statement that the braid group acts on any fixed $\scrC_\chamC$, generated by the mutation functors $\Phi_i$.  In the other settings (2) and (3), usually the categories in different chambers are not equal, and so we cannot make this identification.  As such, usually in the setting (2) and always in the setting (3), the homomorphism \eqref{eqn: Deligne ver} is the best possible.
\end{rem}

The main technical point below is that in all settings (1)--(3), the mutation functor $\Phi_i\cong\RHom(\mathrm{T}_i,-)$ where $\mathrm{T}_i$ is a tilting module of projective dimension one.  This fact is what allows all cases to be treated uniformly.

\subsection{Standard Hearts and Deformation Algebras}\label{subsec: cont alg}
The categories $\scrC_\chamC$ inherit a standard t-structure. Maintaining the notation in \S\ref{sec: bij MMP}, consider the following.
\begin{enumerate}
\item For the minimal resolution $\scrZ\to\mathbb{C}^2/\Gamma$, the standard t-structure on $\Db(\fmod\Uppi)$ restricts to give a bounded t-structure on $\scrC_\chamC$ with heart $\fmod(\Uppi/(e_0))$. Note that $\Uppi/(e_0)$ is the preprojective algebra of (finite) type ADE.
\item For a partial resolution $\scrY\to\mathbb{C}^2/\Gamma$, the standard t-structure on $\Db(\fmod e_\scrJ\Uppi e_\scrJ)$ restricts to give a bounded t-structure on $\scrC_\chamC$ with heart $\fmod(e_\scrJ\Uppi e_\scrJ/(e_0))$. 
\item For a flopping contraction $\scrX\to\Spec\scrR$, the standard t-structure on $\Db(\fmod \Lambda_\chamC)$ restricts to give a bounded t-structure on $\scrC_\chamC$ with heart $\fmod(\Lambda_\chamC/(e))$. 
Note that $\Lambda_{\chamC,\con}\colonequals \Lambda_\chamC/(e)\cong\underline{\End}_\scrR(M_\chamC)$ is known as the contraction algebra.
\end{enumerate}
\begin{nota}
Write $\fmod\mathrm{A}_{\chamC,\con}$ for the standard heart in $\scrC_\chamC$, where $\mathrm{A}_{\chamC,\con}$ varies depending on the setting (1)--(3), and so equals $\Uppi/(e_0)$, $e_\scrJ\Uppi e_\scrJ/(e_0)$ or $\Lambda_\chamC/(e)$.
\end{nota}
Although the precise algebra $\mathrm{A}_{\chamC,\con}$ varies, it is always finite dimensional.  Furthermore, in all cases $\mathrm{A}_{\chamC,\con}$ represents noncommutative deformations of the reduced fibre.

\subsection{Simples}\label{subsec: simples}
All the standard hearts $\fmod\mathrm{A}_{\chamC,\con}$ above have finite length, with finitely many simples.  As such, recording the action of functors on the simples becomes important.

\begin{nota}
For any chamber $\chamC$, write $\scrS_{1,\chamC}, \hdots,\scrS_{n,\chamC}$ for the simple $\mathrm{A}_{\chamC,\con}$-modules, and consider $\scrS_\chamC\colonequals \bigoplus_{i=1}^n\scrS_{i,\chamC}$.   If it is implicitly clear to which category the simples belong, we will drop the subscript $\chamC$ and write  $\scrS_{1}, \hdots,\scrS_{n}$ for the simples and $\scrS= \bigoplus\scrS_{i}$ for their sum.
\end{nota}

By \cite[3.5.8]{VdB} $\scrS_{i,\chamC}$ corresponds to the sheaf $\scrO_{\Curve_i}(-1)$ under the derived equivalence \eqref{NCCR Db}.  The labelling is consistent so that mutation $\Phi_i$ always shifts $\scrS_i$ to the right by one homological degree, in all three settings.

\begin{lem}\label{actions on some modules}
For a chamber $\chamC$, consider a length one wall crossing $s_i\colon \chamC \to \mathsf{D}$, and also the `longest element' atom $\ell\colon \chamC\to\mathsf{op}(\chamC)$.  Then the following statements hold.
\begin{enumerate}
\item\label{actions on some modules 1} $\Phi_i(\scrS_i)=\scrS_{i}[-1]$,
\item\label{actions on some modules 2}  There exists a permutation $\upsigma$ such that $\Phi_\ell(\scrS_i)\cong\scrS_{\upsigma(i)}[-1]$ for all $i=1,\hdots,n$.  
\item\label{actions on some modules 3} If $x\in\fmod \mathrm{A}_{\chamC, \con}$, then $\Phi_\ell(x)\cong y[-1]$ for some $y\in\fmod \mathrm{A}_{\mathsf{op}(\chamC),{\con}}$.
\end{enumerate}
\end{lem}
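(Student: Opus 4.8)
The plan is to prove \eqref{actions on some modules 1} by a direct computation with the exchange sequence defining $\mathrm{T}_i$, to deduce \eqref{actions on some modules 3} from a structural property of the tilting module representing the longest atom $\ell$, and finally to extract \eqref{actions on some modules 2} from \eqref{actions on some modules 3} by a formal argument about hearts. For \eqref{actions on some modules 1}: recall from \S\ref{sec: bij MMP} that $\Phi_i\cong\RHom_{\Lambda_\chamC}(\mathrm{T}_i,-)$ with $\mathrm{T}_i$ a tilting module of projective dimension one; it is obtained from the projective generator of $\Lambda_\chamC$ by a single tilting mutation, so $\mathrm{T}_i=(\bigoplus_{j\neq i}P_j)\oplus\mathrm{T}_i'$ with an exchange sequence $0\to P_i\to Q\to\mathrm{T}_i'\to 0$ where $Q\in\add(\bigoplus_{j\neq i}P_j)$. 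Since $\scrS_i$ is the simple at vertex $i$ and each $P_j$ is projective, $\RHom_{\Lambda_\chamC}(P_j,\scrS_i)=0$ for $j\neq i$ while $\Hom_{\Lambda_\chamC}(P_i,\scrS_i)=\mathbb{C}$, so applying $\RHom_{\Lambda_\chamC}(-,\scrS_i)$ to the exchange sequence collapses the long exact sequence to $\Hom_{\Lambda_\chamC}(\mathrm{T}_i',\scrS_i)=0$ and $\Ext^1_{\Lambda_\chamC}(\mathrm{T}_i',\scrS_i)=\mathbb{C}$. Hence $\Phi_i(\scrS_i)$ is one-dimensional, concentrated in cohomological degree one, and as a module over $\End_{\Lambda_\chamC}(\mathrm{T}_i)=\Lambda_\mathsf{D}$ it is the vertex-$i$ simple; by the labelling convention of \S\ref{subsec: simples} this is precisely $\scrS_i[-1]$.

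For \eqref{actions on some modules 3}, the key input — and the main obstacle — is the structural fact that $\Phi_\ell\cong\RHom_{\Lambda_\chamC}(\mathrm{T}_\ell,-)$ for a tilting module $\mathrm{T}_\ell$ that again has projective dimension at most one and whose top $\operatorname{top}\mathrm{T}_\ell$ is supported entirely at the distinguished idempotent (namely $e$, or $e_0$ in the surface settings); this needs to be read off, uniformly in all three settings, from the constructions recalled in \S\ref{sec: bij MMP} and the analyses in \cite{IW9, HW2} (in setting (1), from the structure of the ideals $I_w$ for the longest element). Granting it, let $x\in\fmod\mathrm{A}_{\chamC,\con}$, so $ex=0$: any nonzero map $\mathrm{T}_\ell\to x$ has image whose top is a nonzero quotient of $\operatorname{top}\mathrm{T}_\ell$, hence supported at $e$, but the image is a submodule of $x$ and so is killed by $e$, a contradiction. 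Therefore $\Hom_{\Lambda_\chamC}(\mathrm{T}_\ell,x)=0$, and since $\projdim\mathrm{T}_\ell\leq 1$ this forces $\Phi_\ell(x)\cong y[-1]$ with $y=\Ext^1_{\Lambda_\chamC}(\mathrm{T}_\ell,x)$ a $\Lambda_{\mathsf{op}(\chamC)}$-module. Finally $\Phi_\ell$ maps $\scrC_\chamC$ into $\scrC_{\mathsf{op}(\chamC)}$, so $e\,\H^*\Phi_\ell(x)=0$, i.e.\ $ey=0$, whence $y\in\fmod\mathrm{A}_{\mathsf{op}(\chamC),\con}$.

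For \eqref{actions on some modules 2}: by \eqref{actions on some modules 3} the triangle equivalence $\Phi_\ell$ sends the heart $\fmod\mathrm{A}_{\chamC,\con}$ of $\scrC_\chamC$ into the heart $\fmod\mathrm{A}_{\mathsf{op}(\chamC),\con}[-1]$ of $\scrC_{\mathsf{op}(\chamC)}$. Its image is itself the heart of a bounded t-structure, and a heart of a bounded t-structure contained in another coincides with it; therefore $[1]\circ\Phi_\ell$ restricts to an exact equivalence $\fmod\mathrm{A}_{\chamC,\con}\simto\fmod\mathrm{A}_{\mathsf{op}(\chamC),\con}$, which is necessarily a Morita equivalence and hence matches up the simple modules, producing a permutation $\upsigma$ with $\Phi_\ell(\scrS_i)\cong\scrS_{\upsigma(i)}[-1]$ for all $i$. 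In summary, \eqref{actions on some modules 1} is an elementary diagram chase and \eqref{actions on some modules 2} is a formal t-structure argument downstream of \eqref{actions on some modules 3}; the real content is verifying the projective dimension and top of $\mathrm{T}_\ell$ needed for \eqref{actions on some modules 3}.
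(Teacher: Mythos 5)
Your outline is a genuine reduction, and two of its three steps are fine: the exchange-sequence computation of $\Phi_i(\scrS_i)$ in (1) is correct (the paper simply quotes \cite[6.3]{HomMMP} for this), and the formal deduction of (2) from (3) — image of the heart sits inside a shifted heart, hearts contained in hearts coincide, so $[1]\circ\Phi_\ell$ is an exact equivalence $\fmod \mathrm{A}_{\chamC,\con}\simeq\fmod \mathrm{A}_{\mathsf{op}(\chamC),\con}$ and permutes simples — is valid. The genuine gap is exactly the input you yourself flag as ``the main obstacle'': that $\mathrm{T}_\ell$ has projective dimension at most one \emph{and} top supported entirely at the distinguished idempotent, uniformly in all three settings. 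You give no argument for this, and it cannot simply be ``read off'': since $\projdim \mathrm{T}_\ell\leq 1$, the condition $\Hom_{\Lambda_\chamC}(\mathrm{T}_\ell,\scrS_j)=0$ for all $j$ is literally the statement $\H^0(\Phi_\ell\scrS_j)=0$, i.e.\ the degree-zero vanishing that is the substance of part (2) of the lemma. So, granting the top condition, your proof works, but the top condition carries essentially all of the content of (2)--(3); as written the argument is close to circular unless you either prove it independently or locate a precise statement of it in the sources. In setting (1), for instance, it amounts to showing that no simple $\scrS_j$ with $j\neq 0$ occurs in the top of the ideal $\mathrm{I}_{w_0}$ of the affine preprojective algebra — true, but this is a computation of the same nature as the one being cited, not a formality.

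For comparison, the paper's proof runs in the opposite direction: parts (1) and (2) are quoted directly from \cite[6.3]{HomMMP}, and (3) is then deduced from (2) by d\'evissage, since every $x\in\fmod\mathrm{A}_{\chamC,\con}$ is filtered by the simples and $\Phi_\ell$ (being $\RHom$ against a module of projective dimension one) sends such a filtration to triangles concentrated in degree one. If you want to keep your architecture, the honest fix is either to adopt that order — establish (2) first, by citation or by an explicit analysis of $\mathrm{T}_\ell$, and then run the filtration argument for (3) — or to supply a self-contained proof that $\operatorname{top}\mathrm{T}_\ell$ is supported at $e$ in each of the three settings, which is where the real work lies.
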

\begin{proof}
(1) and (2) are contained in \cite[6.3]{HomMMP}.  Part (3) follows easily from (2) since all such $x$ are filtered by simples.
\end{proof}

\section{Classification of Spherical Objects}\label{sec: classification}

Throughout this section, let $\scrC_\chamC$ be the category defined in \S\ref{sec: bij MMP}, and we will freely use the equivalent categories $\scrC_\chamD$ and the functors $\Phi_\upalpha$ explained in \S\ref{sec: bij MMP} and \ref{Phi alpha}.

\subsection{Bounded Regions}\label{sec: atoms and simples}

Given $x\in\scrC_\chamC$, write $\H^*(x)$ or sometimes more simply $\H^*x$ for the cohomology of $x$ with respect to the standard heart $\fmod \mathrm{A}_{\chamC,\con}$ on $\scrC_\chamC$.  So, $\H^*(x)$ always denotes cohomology with respect to the standard heart of the category to which $x$ belongs.  To illustrate this, if $x\in\scrC_\chamC$ and $\upbeta\colon\chamC\to\chamD$, then $\Phi_\upbeta(x)\in\scrC_\chamD$ and so $\H^*(\Phi_\upbeta x)$ denotes the cohomology of  $\Phi_\upbeta(x)$ with respect to the standard t-structure on $\scrC_\chamD$.

\begin{nota}\label{nota: squares 1}
Write $x\in [a,b]$, where $a\leq b$, to mean $\H^i(x)=0$ if $i<a$ and $i>b$, and write $x\in [\![a,b]\!]$ to mean $x\in [a,b]$ where furthermore $\H^a(x)\neq 0$ and $\H^b(x)\neq 0$.  
\end{nota}

There are obvious self-documenting variations, such as $[\![a,b]$.  
The above notation can be interpreted using vanishings of $\Hom$ against the direct sum of the simples $\scrS=\bigoplus\scrS_i$. The following is an easy consequence of the fact that every bounded complex admits a morphism from its (shifted) bottom cohomology, and a nonzero map to its (shifted) top cohomology, together with the fact that two objects in a heart admit no negative Exts.  A more general version of the following, and proof, can be found in \ref{bound for objects}.

\begin{lem} \label{notation via vanishings}
If $x \in \scrC$, then the following are equivalent.
\begin{enumerate}
\item $x \in [a,b]$.
\item $\Hom(\scrS, x[i]) = 0$ for all $i < a$ and $\Hom(x, \scrS[i]) = 0$ for all $i < -b$.
\end{enumerate}
Under these conditions, $x \in  [\![a,b]$ iff $\Hom(\scrS, x[a]) \neq 0$, and $x \in [a, b]\!]$ iff $\Hom(x, \scrS[-b]) \neq 0$.
\end{lem}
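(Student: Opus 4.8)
The plan is to prove the four equivalences by unwinding the definitions and using two elementary homological facts about bounded t-structures. Recall the key structural inputs: the standard heart $\fmod\mathrm{A}_{\chamC,\con}$ is a finite length category whose simples are (up to nothing) the $\scrS_i$, so that an object $M$ in the heart is nonzero if and only if $\Hom(\scrS,M)\neq 0$ (some simple maps into the socle) if and only if $\Hom(M,\scrS)\neq 0$ (some simple is a quotient). The second input is that for a bounded t-structure, any object $x$ with top nonzero cohomology in degree $b$ admits a nonzero map $x\to\H^b(x)[-b]$, namely the truncation, and dually any object with bottom nonzero cohomology in degree $a$ admits a nonzero map $\H^a(x)[-a]\to x$; moreover $\Hom$ between two objects of a heart vanishes in all negative degrees.

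\textbf{Proof of the $[a,b]$ equivalence.} First I would show (1) $\Rightarrow$ (2). Suppose $x\in[a,b]$. For $i<a$, any morphism $\scrS\to x[i]$ factors through the truncation $\uptau^{\leq i}(x[i])=\uptau^{\leq i+a-1}$-part, but since $x\in[a,b]$ we have $\uptau^{\leq i}x[i] = \uptau^{\leq i}x[i]$; more cleanly, $\Hom(\scrS,x[i])=\Hom(\scrS[-i],x)$, and $\scrS[-i]$ lives in degree $i<a$ while $x$ lives in $[a,b]$, so this $\Hom$ group vanishes because a map from something concentrated in strictly lower degree into an object bounded below in degree $a$ must be zero (filter $\scrS[-i]$ — it is already in a single degree $i$ — and use that $\Hom(\scrS[-i],\H^{j}(x)[-j])=0$ for $j\geq a>i$ as these are Homs between shifted heart objects with negative shift). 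The dual statement $\Hom(x,\scrS[i])=0$ for $i<-b$ is symmetric. For (2) $\Rightarrow$ (1): if $\H^{i}(x)\neq 0$ for some $i<a$, let $c<a$ be the smallest such degree; then the truncation gives a nonzero map $\H^c(x)[-c]\to x$, and postcomposing with a nonzero $\scrS\to\H^c(x)[-c]$ (which exists since $\H^c(x)\neq 0$ in the finite length heart) yields a nonzero element of $\Hom(\scrS,x[c])$ with $c<a$, a contradiction. The dual argument handles the upper bound. I would write these two arguments compactly, citing \ref{notation via vanishings} is not available since this \emph{is} that lemma, so I cite only the heart facts above.

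\textbf{Proof of the two endpoint refinements.} Assuming $x\in[a,b]$, I claim $x\in[\![a,b]$ iff $\Hom(\scrS,x[a])\neq 0$. The forward direction: $x\in[\![a,b]$ means $\H^a(x)\neq 0$, so pick a nonzero $\scrS\to\H^a(x)[-a]$; since $x\in[a,b]$, the canonical map $\H^a(x)[-a]\to x$ (bottom truncation, as $a$ is the lowest possibly-nonzero degree) is a monomorphism on $\H^a$ hence nonzero, and the composite $\scrS\to\H^a(x)[-a]\to x$ is nonzero because already $\scrS\to\H^a(x)$ is nonzero and $\H^a(x)[-a]\to x$ induces an isomorphism on $\H^a$. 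This gives a nonzero element of $\Hom(\scrS,x[a])$. Conversely, if $\Hom(\scrS,x[a])\neq 0$ then by the already-proven equivalence $x\notin[a+1,b]$, so combined with $x\in[a,b]$ we get $\H^a(x)\neq 0$, i.e.\ $x\in[\![a,b]$. The statement $x\in[a,b]\!]$ iff $\Hom(x,\scrS[-b])\neq 0$ is proved by the mirror-image argument using the top truncation $x\to\H^b(x)[-b]$ and a nonzero quotient map $\H^b(x)\to\scrS$.

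\textbf{Main obstacle.} The only genuinely delicate point is making precise that a map from a heart object concentrated in degree $i$ into a complex bounded in $[a,b]$ with $i<a$ must vanish, and dually; this is where I must invoke that hearts have no negative self-Exts together with the bounded-above/below truncation triangles, rather than hand-wave. Everything else is bookkeeping with the truncation functors of the bounded t-structure on $\scrC_\chamC$ (equivalently $\fmod\mathrm{A}_{\chamC,\con}$), which is legitimate since that heart is finite length. I expect the write-up to be four or five lines per bullet, with the bulk of the care concentrated in the single vanishing lemma that I would state once and reuse four times.
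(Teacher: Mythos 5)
Your argument is correct and is essentially the paper's: the paper proves the more general heart version (Lemma \ref{bound for objects}) via exactly the same two ingredients you isolate — the truncation triangles giving nonzero maps $\H^a(x)[-a]\to x$ and $x\to\H^b(x)[-b]$, plus vanishing of negative Exts between heart objects used through the cohomology filtration — and then treats \ref{notation via vanishings} as the special case. Your only addition, passing between $\scrA$ and $\scrS$ via a simple in the socle (resp.\ a simple quotient) of a nonzero finite-length module and checking the composite is nonzero by applying $\H^a$ (resp.\ $\H^b$), is precisely the step the paper leaves implicit, and you handle it correctly.
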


The following lemma is a summary of known results, and is ultimately an easy consequence of the mutation functors being induced by tilting modules of projective dimension one.  The main point is that for atoms $\upalpha$, the functor $\Phi_\upalpha$ can at worst move cohomology to the right.

\begin{lem}\label{lem:atoms}
Let $x\in\scrC_\chamC$ with $x\in[\![a,b]\!]$, and let $\upalpha\colon\chamC\to\chamD$ be an atom. Then the following statements hold.
\begin{enumerate}
\item\label{lem:atoms 0} 
$\Phi_\upalpha\cong\RHom(\mathrm{T}_\upalpha,-)$ for some $\mathrm{T}_\upalpha$ satisfying $\mathrm{pd}\, \mathrm{T}_\upalpha\leq 1$.
\item\label{lem:atoms 1} $\Phi_\upalpha(x)\in [a,b+1]$.
\item\label{lem:atoms 1B} If $\Phi_\upalpha(x)\in [a,b]$, then $\Phi_\upalpha(x)\in [a,b]\!]$.
\item\label{lem:atoms 2} If there exists $\scrS_i$ with $\Hom_\scrC(\H^bx,\scrS_i)=0$, then $\Phi_i(x)\in [a,b]$.
\item\label{lem:atoms 3} $s_i\circ\upalpha$ is an atom if and only if $\Phi_i\circ\Phi_\upalpha(\scrS)\colonequals \Phi_{s_i\circ\upalpha}(\scrS)\in [0,1]$.
\item\label{lem:atoms 4} Consider the longest element atom $\ell\colon\chamC\to\mathsf{op}(\chamC)$.  Then $\Phi_\ell(x)\in [\![ a+1, b+1]\!]$.
\end{enumerate}
\end{lem}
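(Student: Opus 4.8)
Lemma \ref{lem:atoms} packages six standard facts about how atomic mutation moves cohomology, and I would prove them in a block, since they mostly reduce to the single observation in part \eqref{lem:atoms 0}: the mutation functor $\Phi_\upalpha$ is $\RHom(\mathrm{T}_\upalpha,-)$ for a tilting module $\mathrm{T}_\upalpha$ of projective dimension $\le 1$. For the final statement \eqref{lem:atoms 4}, the key point is that the longest-element atom $\ell\colon\chamC\to\mathsf{op}(\chamC)$ has a very rigid behaviour on the standard heart, recorded in Lemma \ref{actions on some modules}\eqref{actions on some modules 3}: it sends $\fmod\mathrm{A}_{\chamC,\con}$ into $\fmod\mathrm{A}_{\mathsf{op}(\chamC),\con}[-1]$, shifting by exactly one homological degree. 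So $\Phi_\ell$ takes the standard heart to the standard heart shifted by $1$, hence takes the standard t-structure on $\scrC_\chamC$ to the standard t-structure on $\scrC_{\mathsf{op}(\chamC)}$ shifted by $1$. Since cohomology in \ref{nota: squares 1} is measured against the standard heart of whichever category the object lives in, an object $x\in\scrC_\chamC$ with $x\in[\![a,b]\!]$ must therefore satisfy $\Phi_\ell(x)\in[\![a+1,b+1]\!]$: the functor is an equivalence of triangulated categories intertwining the two standard t-structures up to the shift $[-1]$, so it carries nonvanishing top and bottom cohomology to nonvanishing top and bottom cohomology, each index raised by one.

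**Key steps, in order.** First, establish \eqref{lem:atoms 0} by citing the calibration in \S\ref{sec: bij MMP} (the closing paragraph there already states $\Phi_i\cong\RHom(\mathrm{T}_i,-)$ with $\mathrm{pd}\,\mathrm{T}_i\le 1$), and note that for a general atom $\upalpha$, $\mathrm{T}_\upalpha$ is the corresponding Hom-module between the rigid objects / tilting modules at the two ends, which again has projective dimension at most one by the standard theory in \cite{IW1, IW9, HomMMP}. Second, for \eqref{lem:atoms 4} specifically, recall Lemma \ref{actions on some modules}\eqref{actions on some modules 3}: $\Phi_\ell$ restricted to the standard heart lands in $\fmod\mathrm{A}_{\mathsf{op}(\chamC),\con}[-1]$. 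Third, observe that since $\Phi_\ell$ is an exact equivalence of triangulated categories, this single fact determines it on t-structures: the preimage under $\Phi_\ell$ of the standard t-structure on $\scrC_{\mathsf{op}(\chamC)}$, shifted appropriately, is a bounded t-structure on $\scrC_\chamC$ whose heart contains $\fmod\mathrm{A}_{\chamC,\con}$, and a bounded t-structure is determined by its heart, so it equals the standard one. Concretely, $\H^i_{\mathsf{op}(\chamC)}(\Phi_\ell x) \cong \Phi_\ell\big(\H^{i-1}_{\chamC}(x)\big)$ for all $i$, with the right-hand side computed using Lemma \ref{actions on some modules}\eqref{actions on some modules 3}. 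Fourth, conclude: $\H^i_{\mathsf{op}(\chamC)}(\Phi_\ell x)=0$ for $i<a+1$ and $i>b+1$, and is nonzero at $i=a+1$ and $i=b+1$ because $\Phi_\ell$ (being an equivalence) takes the nonzero objects $\H^a_\chamC(x)$, $\H^b_\chamC(x)$ to nonzero objects. This is exactly $\Phi_\ell(x)\in[\![a+1,b+1]\!]$.

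**The main obstacle.** The only delicate point is justifying that $\Phi_\ell$ intertwines the two standard t-structures with the shift $[-1]$, i.e.\ that $\H^i_{\mathsf{op}(\chamC)}\circ\Phi_\ell\cong\Phi_\ell\circ\H^{i-1}_\chamC$ as functors. The inclusion "$\Phi_\ell$ sends standard heart into standard heart shifted by one" from Lemma \ref{actions on some modules}\eqref{actions on some modules 3} gives one containment of hearts; for the reverse (so that the two bounded t-structures actually coincide) one needs that $\Phi_\ell$ is an equivalence and that both categories are finite length with the "same" simples — which holds because Lemma \ref{actions on some modules}\eqref{actions on some modules 2} says $\Phi_\ell$ permutes the simples up to the shift $[-1]$, so it sends a complete set of simples to a complete set of simples. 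With that in hand, the cohomological statement follows from the standard fact that an exact equivalence carrying one bounded heart onto another (up to shift) commutes with the corresponding cohomology functors up to that shift. Everything else is bookkeeping with Notation \ref{nota: squares 1}. I would therefore present \eqref{lem:atoms 4} in two lines: invoke \ref{actions on some modules}\eqref{actions on some modules 2}--\eqref{actions on some modules 3} to see $\Phi_\ell$ swaps the standard t-structures with a degree shift of one, then read off the effect on $[\![a,b]\!]$.
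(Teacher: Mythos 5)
Your proposal only genuinely addresses parts \eqref{lem:atoms 0} and \eqref{lem:atoms 4}, and the blanket claim that the remaining parts ``mostly reduce to'' the projective-dimension-one observation is where the gap lies. Part \eqref{lem:atoms 1} does follow from \eqref{lem:atoms 0}: the paper runs the spectral sequence $\mathrm{E}_2^{p,q}=\H^p(\Phi_\upalpha(\H^qx))\Rightarrow\H^{p+q}(\Phi_\upalpha x)$, which degenerates because only the rows $p=0,1$ are nonzero. But part \eqref{lem:atoms 1B} already needs a further (short) argument you never give: if $\H^b(\Phi_\upalpha x)=\H^{b+1}(\Phi_\upalpha x)=0$ then the whole top row vanishes, so $\Phi_\upalpha(\H^bx)=0$, whence $\H^bx=0$ since $\Phi_\upalpha$ is an equivalence --- contradiction. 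Part \eqref{lem:atoms 2} requires a genuinely new input, not a formal consequence of $\mathrm{pd}\,\mathrm{T}_\upalpha\leq 1$: for a finite length module $y$ with $\Hom(y,\scrS_i)=0$ one needs $\H^j(\Phi_i y)=0$ for $j\neq 0$, which is a property of the specific tilting module at a simple wall crossing (the paper cites \cite[5.10]{HomMMP}); only then does the spectral sequence kill $\H^{b+1}(\Phi_i x)$. Most seriously, part \eqref{lem:atoms 3} is an if-and-only-if linking the combinatorics of the arrangement (being an atom) to homological data; the paper proves it by invoking the criterion of \cite[6.3]{HW}, \cite[10.17]{IW9} (an $\Ext$-vanishing condition against $\Lambda$ characterising atoms) and then translating via the $d$-singular-Calabi--Yau duality of $\Lambda$ into the statement $\Phi_{s_i\circ\upalpha}(\scrS)\in[0,1]$. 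Nothing in your plan touches this, and it cannot be extracted from \eqref{lem:atoms 0}. Since \eqref{lem:atoms 2} and \eqref{lem:atoms 3} are precisely the inputs that drive the induction in the main theorem, their omission is a substantive gap rather than bookkeeping.

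On the parts you did treat: your argument for \eqref{lem:atoms 4} is correct and in fact more detailed than the paper, which dismisses it as an easy consequence of \ref{actions on some modules}\eqref{actions on some modules 3}. The point you isolate --- that $\Phi_\ell$ carries the standard heart \emph{onto} the shifted standard heart because it permutes the simples (\ref{actions on some modules}\eqref{actions on some modules 2}) and both hearts are the extension closures of their simples, so the two standard t-structures are intertwined up to $[-1]$ and hence so are the cohomology functors --- is the right justification, and matches how the paper itself deduces \ref{actions on some modules}\eqref{actions on some modules 3} from \eqref{actions on some modules 2}. Part \eqref{lem:atoms 0} is, as in the paper, a citation; your attribution for general atoms is vaguer than the precise references \cite[4.6]{HW}, \cite[9.34]{IW9}, \cite[5.30]{IW9}, but that is acceptable at this level.
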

\begin{proof}
(1) In the flops setting, this is \cite[4.6]{HW} when $\scrX$ is smooth, and \cite[9.34]{IW9} in general. The case of partial resolutions, or the minimal resolution, is covered by \cite[5.30]{IW9}.\\
(2) Consider the spectral sequence
\[
\mathrm{E}_2^{p,q}=\H^p\bigl(\Phi_\upalpha(\H^qx)\bigr)\Rightarrow \H^{p+q}\bigl(\Phi_\upalpha x\bigr).
\]
By (1), the only non-zero $\mathrm{E}_2$ terms are $\mathrm{E}_2^{0,q}$ and $\mathrm{E}_2^{1,p}$. Thus the spectral sequence degenerates immediately, and the statement follows.\\
(3) If $\H^b(\Phi_\upalpha x)=\H^{b+1}(\Phi_\upalpha x)=0$, then in particular the whole top row of the spectral sequence in (2) must be zero, namely $\H^0(\Phi_\upalpha(\H^b x))=\H^1(\Phi_\upalpha(\H^b x))=0$. It follows that $\Phi_\upalpha(\H^b x)=0$, and so $\H^b x=0$, contradicting $x\in[\![a,b]\!]$.\\
(4) Since $y=\H^bx$ has finite length, the assumption $\Hom(y,\scrS_i)=0$ immediately implies that $\H^j(\Phi_i y)=0$ for all $j\neq 0$, by e.g.\ \cite[5.10]{HomMMP}.  Thus $\H^1(\Phi_i(\H^bx))=0$, from which the spectral sequence in (2) implies that $\H^{b+1}(\Phi_ix)=0$ and so $\Phi_i(x)\in [a,b]$.\\
(5) By \cite[6.3]{HW} in the smooth case, and \cite[10.17]{IW9} in general, $s_i\circ\upalpha$ is an atom if and only if the maximum $p$ with $\Ext^p(\scrS,\Phi_{s_i\upalpha}(\Lambda))\neq 0$ is $p=1+d$.  Since $\Lambda$ is $d$-singular CY (see e.g.\ \cite[\S2.4]{IW1}), this holds iff the maximum $p$ for which $\Ext^{d-p}(\Phi_{s_i\upalpha}(\Lambda),\scrS)\cong \H^{d-p}(\Phi_{s_i\upalpha}^{-1}\scrS)\neq 0$ is $p=1+d$.  In other words, iff the minimum $q$ for which $\H^{q}(\Phi_{s_i\upalpha}^{-1}\scrS)\neq 0$ is $q=-1$, which holds iff $\Phi_{s_i\upalpha}^{-1}(\scrS)\in[-1,0]$.  It is clear, using (2), that this is equivalent to $\Phi_{s_i\upalpha}(\scrS)\in[0,1]$.\\
(6) This is an easy consequence of \ref{actions on some modules}\eqref{actions on some modules 3}.
\end{proof}

The following technical lemma is a very mild generalisation of \cite[6.8]{SW}.  A more general statement and proof, which also avoids spectral sequences, is given in \ref{magic lemma Db 2} later.

\begin{lem} \label{magic lemma}
Let $x, y \in \scrC$ with $x \in[\![a,b]$ and $y \in [c, d]\!]$.
\begin{enumerate}
\item\label{magic lemma 1} If $\Hom_{\scrC}(y,x[a-d]) = 0$, then $\Hom_{\scrC}(\H^dy, \H^ax)=0.$
\item\label{magic lemma 2} If $x\in[\![a,b]\!]$, $a<b$, $\Hom_{\scrC}( x, x[i])=0$ for all $i<0$, then $\Hom_{\scrC}(\H^bx,\H^ax)=0$.
\end{enumerate}
\end{lem}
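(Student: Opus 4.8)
The plan is to deduce both statements from the spectral sequence computing cohomology of $\RHom_\scrC(y,x)$ via cohomology sheaves, together with the crucial input from \ref{lem:atoms}\eqref{lem:atoms 0} that the mutation functors have cohomological amplitude $[0,1]$ — though here the relevant amplitude control is really about $\scrC$ being $d$-CY (or singular $d$-CY) and, more directly, about the hypercohomology spectral sequence for the bifunctor $\Hom_\scrC(-,-)$. For part~\eqref{magic lemma 1}: since $x\in[\![a,b]$ we have $\H^ax\neq 0$ as the bottom cohomology, and since $y\in[c,d]\!]$ we have $\H^dy\neq 0$ as the top cohomology. Consider the hyperext spectral sequence
\[
\mathrm{E}_2^{p,q}=\bigoplus_{j}\Hom_{\scrC}\bigl(\H^{j}y,\H^{j+p}x[q]\bigr)\Rightarrow \Hom_{\scrC}\bigl(y,x[p+q]\bigr),
\]
or rather the version filtering by the (finitely many, bounded) cohomology objects of $y$ and $x$. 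First I would isolate the corner term: the only way to land in total degree $p+q = a-d$ with $\H^{j}y$ and $\H^{j+p}x$ both nonzero and $q\geq 0$ forces $j=d$, $j+p=a$, hence $p=a-d\le 0$ and $q=0$; since two objects of a heart have no negative Exts, $q$ cannot be negative, so this term $\Hom_\scrC(\H^dy,\H^ax)$ sits in the extreme corner of the $\mathrm{E}_2$-page. The key step is then that this corner term receives no differentials and supports no differentials (all neighbouring terms that could map in or out vanish, either because they involve $\Hom$ into negative degree between heart objects, or because they fall outside the cohomological range $[a,b]$ of $x$ / $[c,d]$ of $y$), so it survives to $\mathrm{E}_\infty$ and is a subquotient — in fact here a sub or quotient at the edge — of $\Hom_\scrC(y,x[a-d])$. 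Hence if the latter vanishes, so does $\Hom_\scrC(\H^dy,\H^ax)$.

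For part~\eqref{magic lemma 2}: now $x\in[\![a,b]\!]$ with $a<b$, so $x$ plays the role of both $x$ and $y$ in part~\eqref{magic lemma 1}, with $c=a$, $d=b$. Applying \eqref{magic lemma 1} with $y\colonequals x$ gives: if $\Hom_\scrC(x,x[a-b])=0$ then $\Hom_\scrC(\H^bx,\H^ax)=0$. Since $a<b$ we have $a-b<0$, so the hypothesis $\Hom_\scrC(x,x[i])=0$ for all $i<0$ supplies exactly $\Hom_\scrC(x,x[a-b])=0$, and we are done.

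The main obstacle is the bookkeeping in the first part: one must verify carefully that the corner term $\Hom_\scrC(\H^dy,\H^ax)$ genuinely is an edge term of the spectral sequence that neither emits nor receives a nonzero differential. This rests on two vanishing principles — that $\Hom$ between two objects of a heart is concentrated in non-negative degrees (killing differentials crossing into $q<0$), and that $\H^{<a}x=\H^{>b}x=0$, $\H^{<c}y=\H^{>d}y=0$ bound the range of nonzero $\mathrm{E}_2$ terms — and I would want to be explicit about which differentials $d_r$, for $r\geq 2$, could a priori hit the corner. Since the paper explicitly flags that a cleaner, spectral-sequence-free argument appears later in \ref{magic lemma Db 2}, I would present the above as the quick route and defer the streamlined version; the essential content is simply that bottom-of-$x$ to top-of-$y$ sits at the corner of the page and so is detected by the total $\Hom$ in the corresponding total degree.
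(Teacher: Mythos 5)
Your proposal is correct and is essentially the paper's own argument: the same hyperext spectral sequence $\mathrm{E}_2^{p,q}=\bigoplus_i\Hom^p_\scrC(\H^iy,\H^{i+q}x)\Rightarrow\Hom^{p+q}_\scrC(y,x)$ (you merely swap the roles of the two indices), with the two vanishing regions coming from no negative Exts between heart objects and from the cohomological bounds on $x$ and $y$, so that the corner term $\Hom_\scrC(\H^dy,\H^ax)$ survives to equal $\Hom_\scrC(y,x[a-d])$, and part (2) follows by taking $y=x$. The only quibble is your framing remark that the needed input is the (singular) $d$-CY property or the amplitude of the mutation functors — neither is used; what matters is only that $\scrC$ sits inside $\Db(\fmod\Lambda)$ with bounded cohomology in the standard heart.
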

\begin{proof}
(1) Since $\scrC\subset \Db(\fmod\Lambda)$, by \cite[\S 4.1]{IU} there is a spectral sequence
\[
\mathrm{E}_2^{p,q}=\bigoplus_{i}\Hom_\scrC^p(\H^iy,\H^{i+q}x)\Rightarrow \Hom_\scrC^{p+q}(y,x).
\]
Since $x \in[\![a,b]$ and $y \in [c, d]\!]$, $\mathrm{E}_2^{p,q}=0$ for all $q$ with $q < a - d$.  Further, since modules have no negative Exts, $\mathrm{E}_2^{p,q}=0$ for all $p$ with $p<0$.  It follows that the bottom left term is $\mathrm{E}_2^{0, a-d} = \Hom_{\scrC}(\H^dy, \H^ax)$, which survives to give $\Hom_{\scrC}(y,x[a-d])$. Since this is zero by assumption, necessarily $\Hom_{\scrC}(\H^dy, \H^ax)=0$.\\
(2) This is an immediate consequence of (1).
\end{proof}

\subsection{Main Results}\label{sec: main results first}

The following is the first main result of this paper. It asserts that any object $x\in\scrC$ which behaves like it lies in the heart of a t-structure, namely it admits no negative self-extension groups, does lie in the heart of a t-structure.
\begin{thm}\label{make module}
Suppose that $x\in\scrC_\chamC$ satisfies $\Hom_{\scrC}( x, x[i])=0$ for all $i<0$.  Then there exists $\upbeta\colon\chamC\to\chamD$ such that the corresponding $\Phi_\upbeta\colon \scrC_\chamC\to\scrC_{\chamD}$ satisfies $\Phi_\upbeta (x)\in\fmod \mathrm{A}_{\chamD,\con}$.\end{thm}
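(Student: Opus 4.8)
The plan is to induct on the width $b-a$ of the cohomological interval of $x$ with respect to the standard hearts, using the machinery of \ref{lem:atoms} and \ref{magic lemma} to reduce this width by a path of mutation functors.  So suppose $x\in\scrC_\chamC$ with $x\in[\![a,b]\!]$ and $\Hom_\scrC(x,x[i])=0$ for all $i<0$.  If $a=b$ then $x$ (up to shift) already lies in $\fmod\mathrm{A}_{\chamC,\con}$, and after composing with the longest element atom $\ell$ finitely many times, via \ref{lem:atoms}\eqref{lem:atoms 4} and \ref{actions on some modules}\eqref{actions on some modules 3}, we can arrange the shift to be trivial, giving the result.  So assume $a<b$.

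The key claim is that there is an atom $\upalpha\colon\chamC\to\chamD$ with $\Phi_\upalpha(x)\in[a+1,b]$ (note the interval has strictly smaller width, since the top may drop as well, but certainly the bottom rises by \ref{lem:atoms}\eqref{lem:atoms 1}, and we must rule out the width actually increasing).  Granting this claim, $\Phi_\upalpha(x)$ still has no negative self-extensions, as $\Phi_\upalpha$ is an equivalence, so by induction on $b-a$ there is $\upgamma\colon\chamD\to\chamE$ with $\Phi_\gamma\Phi_\upalpha(x)\in\fmod\mathrm{A}_{\chamE,\con}$, and then $\upbeta\colonequals\upgamma\circ\upalpha$ works.

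To prove the claim, apply \ref{magic lemma}\eqref{magic lemma 2} to the bottom and top cohomology: since $a<b$ and $x$ has no negative self-exts, $\Hom_\scrC(\H^bx,\H^ax)=0$.  Now $\H^ax$ is a nonzero module in $\fmod\mathrm{A}_{\chamC,\con}$, hence has some simple quotient $\scrS_i$, so $\Hom_\scrC(\H^bx,\scrS_i)\neq 0$ would have to be ruled out; instead we want a simple $\scrS_j$ with $\Hom_\scrC(\H^bx,\scrS_j)=0$.  The point is that $\H^bx$ is a finite length module which therefore \emph{cannot} surject onto $\scrS$ (the sum of all simples) unless it is annihilated by all other relations, and more to the point the vanishing $\Hom_\scrC(\H^bx,\H^ax)=0$ forces $\H^bx$ to have a composition factor killed by the socle of $\H^ax$, but actually the cleanest route is: since $\H^bx$ has finite length, the set of $j$ with $\Hom_\scrC(\H^bx,\scrS_j)\neq 0$ is exactly the set of $j$ appearing in the top of $\H^bx$, and this is a proper subset of $\{1,\dots,n\}$ \emph{provided} $\H^bx$ is not faithful over $\mathrm{A}_{\chamC,\con}$ in the relevant sense.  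Here we instead invoke \ref{lem:atoms}\eqref{lem:atoms 2}: if there exists $\scrS_i$ with $\Hom_\scrC(\H^bx,\scrS_i)=0$, then $\Phi_i(x)\in[a,b]$; and then \ref{lem:atoms}\eqref{lem:atoms 1B} upgrades this to $\Phi_i(x)\in[a,b]\!]$, and crucially $\Phi_i(x)$ still has no negative self-extensions and its bottom cohomology $\H^a(\Phi_ix)$ is related to $\H^ax$.

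The genuine obstacle is that we cannot in general expect a \emph{single} wall crossing to drop the bottom cohomology; instead, as indicated in the remark after the theorem statement, one must run through a poset argument.  Concretely: among all atoms $\upalpha$ leaving $\chamC$ for which $\Phi_\upalpha(x)\in[a,b]$, the set is nonempty (it contains the identity) and finite, hence has a maximal element $\upalpha_{\max}$ in the atom order of \ref{dfn: atoms order}.  I claim $\Phi_{\upalpha_{\max}}(x)\in[a+1,b]$: writing $z\colonequals\Phi_{\upalpha_{\max}}(x)\in[\![a',b]\!]$ with $a'\geq a$, if $a'=a$ then $\H^az\neq 0$ is a module, has a simple quotient, and the maximality of $\upalpha_{\max}$ together with \ref{lem:atoms}\eqref{lem:atoms 2}--\eqref{lem:atoms 3} forces $\Hom_\scrC(\H^bz,\scrS_i)\neq 0$ for \emph{every} $i$ (otherwise $s_i\circ\upalpha_{\max}$ would be a longer atom with $\Phi_{s_i\upalpha_{\max}}(x)\in[a,b]$); but then $\H^bz$ surjects onto $\scrS=\bigoplus\scrS_i$, and combined with $\Hom_\scrC(\H^bz,\H^az)=0$ from \ref{magic lemma}\eqref{magic lemma 2} we get a contradiction, since $\H^az\neq 0$ must have a simple \emph{sub}module $\scrS_k$, and $\scrS_k$ is then both a quotient of $\H^bz$ and a submodule of $\H^az$, violating $\Hom_\scrC(\H^bz,\H^az)=0$.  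Hence $a'\geq a+1$, proving the claim and completing the induction.  The main work, and the step I expect to be delicate, is verifying that $s_i\circ\upalpha_{\max}$ is genuinely an atom precisely when $\Hom_\scrC(\H^bz,\scrS_i)=0$ — this is where \ref{lem:atoms}\eqref{lem:atoms 3} and the CY duality in its proof are essential, and where one must be careful that the relevant $\mathrm{Hom}$-vanishings line up across the equivalences $\Phi_\upalpha$.
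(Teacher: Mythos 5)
Your skeleton is the paper's: form the finite poset of atoms $\upalpha$ with $\Phi_\upalpha(x)\in[a,b]$, take a maximal element $\upalpha_{\max}$, and derive a contradiction from the vanishing $\Hom_\scrC(\H^bz,\H^az)=0$ of \ref{magic lemma}\eqref{magic lemma 2} by playing a simple inside $\H^az$ against \ref{lem:atoms}\eqref{lem:atoms 2}. However, the step you defer at the end is precisely the content of the proof, and as you phrase it the step is false: you assert that if $\Hom_\scrC(\H^bz,\scrS_i)=0$ then ``$s_i\circ\upalpha_{\max}$ would be a longer atom with $\Phi_{s_i\upalpha_{\max}}(x)\in[a,b]$''. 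That vanishing only gives $\Phi_i\Phi_{\upalpha_{\max}}(x)\in[a,b]$ via \ref{lem:atoms}\eqref{lem:atoms 2}; it does not make $s_i\circ\upalpha_{\max}$ an atom, and if it is not an atom it simply does not lie in your poset, so maximality of $\upalpha_{\max}$ yields no contradiction. By \ref{lem:atoms}\eqref{lem:atoms 3} the atom property is governed by where the composition sends the simples of the \emph{starting} chamber, namely whether $\Phi_i\Phi_{\upalpha_{\max}}(\scrS)\in[0,1]$; this is a condition on $\scrS$, not on $x$, and it is neither implied by nor equivalent to $\Hom_\scrC(\H^bz,\scrS_i)=0$.

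The paper closes exactly this gap with a second application of the Hom-vanishing lemma, and this is where the choice of $\scrS_i$ as a \emph{submodule} of the bottom cohomology is essential (in your write-up it only enters at the final contradiction). Writing $\upbeta=\upalpha_{\max}$ and assuming $\upbeta\neq\Id$ (the case $\upbeta=\Id$ being clear, since $s_i$ is always an atom): since $\upbeta$ is an atom which is neither the identity nor the longest element (the latter because $\ell$ does not lie in the poset), one has $\Phi_\upbeta(\scrS)\in[\![0,1]\!]$. As $x\in[\![a,b]\!]$ gives $\Hom(\scrS,x[a-1])=0$ by \ref{notation via vanishings}, applying the equivalence $\Phi_\upbeta$ and then \ref{magic lemma}\eqref{magic lemma 1} to the pair $\Phi_\upbeta(\scrS)$, $\Phi_\upbeta(x)$ yields $\Hom(\H^1(\Phi_\upbeta\scrS),\H^a(\Phi_\upbeta x))=0$, and composing with the injection $\scrS_i\hookrightarrow\H^a(\Phi_\upbeta x)$ gives $\Hom(\H^1(\Phi_\upbeta\scrS),\scrS_i)=0$. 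Now \ref{lem:atoms}\eqref{lem:atoms 2} applied to $\Phi_\upbeta(\scrS)$ gives $\Phi_i\Phi_\upbeta(\scrS)\in[0,1]$, and \ref{lem:atoms}\eqref{lem:atoms 3} certifies that $s_i\circ\upbeta$ is an atom, which is what makes the maximality argument bite. Without this argument (or some substitute for it) your induction does not close, so what you have is a genuine gap rather than a routine verification left to the reader.
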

\begin{proof}
Since $x$ is a bounded complex, there exists $a\leq b$ such that $x\in [\![a,b]\!]$.  If $a=b$ then $x$ is the shift of a module.  Consider the longest element atom $\ell\colon \chamC\to \mathsf{op}(\chamC)$, then Lemma~\ref{actions on some modules}\eqref{actions on some modules 3} shows that applying either $\Phi_\ell$ or $\Phi_\ell^{-1}$ repeatedly results in a module, over the relevant $\mathrm{A}_{\chamD,\con}$, in degree zero.

Hence we can assume that $a<b$.  The trick is to consider the set
\[
\Updelta\colonequals \{ \upalpha \mid \upalpha\mbox{ is an atom, and } \Phi_\upalpha(x)\in[a,b]\,\},
\]
which in light of \ref{lem:atoms}\eqref{lem:atoms 1} is the set of atoms which do not make $x$ homologically worse. 
Clearly the trivial atom $\chamC\to\chamC$ belongs to $\Updelta$, 
and the longest atom $\ell\colon\chamC\to\mathsf{op}(\chamC)$ does not belong to $\Updelta$ since $\Phi_\ell(x)\in[\![a+1,b+1]\!]$ by \ref{lem:atoms}\eqref{lem:atoms 4}.   

The set $\Updelta$ admits a partial order, since the set of all atoms does (see \ref{dfn: atoms order}), and further the set $\Updelta$ is finite, since the set of all atoms is finite.
Every finite poset has at least one maximal element, so consider a maximal element $\upbeta\in\Updelta$.   By definition $\Phi_\upbeta(x)\in [a,b]$, which by \ref{lem:atoms}\eqref{lem:atoms 1B} forces  $\Phi_\upbeta(x)\in [a,b]\!]$.

We claim that $\Phi_\upbeta(x)\in [a+1,b]\!]$. This implies that $\upbeta$ has improved the complex $x$, which in turn will allow us to induct.   
Since  $\Phi_\upbeta(x)\in[a,b]\!]$, the claim is simply that $\H^a(\Phi_\upbeta x)=0$. 
This will be proved by assuming that $\H^a(\Phi_\upbeta x)\neq 0$, and deriving a contradiction.

Well, if $\H^a(\Phi_\upbeta x)\neq 0$, then $\Phi_\upbeta(x)\in[\![a,b]\!]$ and so   by \ref{magic lemma}\eqref{magic lemma 2} since $a<b$
\begin{equation}
\Hom(\H^b(\Phi_\upbeta x), \H^a(\Phi_\upbeta x))=0.\label{eqn: top bottom 1}
\end{equation}
Since $\H^a(\Phi_\upbeta x)\neq 0$, pick a simple $\scrS_i\hookrightarrow \H^a(\Phi_\upbeta x)$.  
From \eqref{eqn: top bottom 1} it follows that
\[
\Hom(\H^b(\Phi_\upbeta x),\scrS_i)=0.
\]
Applying \ref{lem:atoms}\eqref{lem:atoms 2} to $\Phi_\upbeta(x)$ shows that $\Phi_i\Phi_\upbeta(x)\in [a,b]$, i.e.\ $\Phi_{s_i\upbeta}(x)\in[a,b]$.  
Now if $\upbeta=\Id$ then clearly $s_i\circ \upbeta$ is an atom, and so $\Phi_{s_i\upbeta}(x)\in[a,b]$ contradicts the maximality of $\upbeta$.  
Hence we can assume that $\upbeta\neq \Id$.  

Since the longest atom does not belong to $\Updelta$, and $\upbeta\neq\Id$, necessarily $\Phi_\upbeta(\scrS)\in[\![0,1]\!]$. 
The assumption $x \in[\![a,b]\!]$ implies that $\Hom(\Phi_\upbeta(\scrS), \Phi_\upbeta(x) [a-1]) \cong \Hom(\scrS, x[a-1]) = 0$ by \ref{notation via vanishings}, and so it follows from \ref{magic lemma} that
\begin{equation}
0=
\Hom(\H^1(\Phi_\upbeta\scrS), \H^a(\Phi_\upbeta x)).
\label{eqn: top bottom 2}
\end{equation}
The injection $\scrS_i\hookrightarrow \H^a(\Phi_\upbeta(x))$ together with \eqref{eqn: top bottom 2} implies that
\[
\Hom(\H^1(\Phi_\upbeta \scrS ),\scrS_i)=0.
\]
Applying \ref{lem:atoms}\eqref{lem:atoms 2} to $\Phi_\upbeta(\scrS)$ shows that $\Phi_i\Phi_\upbeta(\scrS)\in[0,1]$, so by  \ref{lem:atoms}\eqref{lem:atoms 3} $s_i\circ\upbeta$ is an atom.  
Again, $\Phi_{s_i\upbeta}(x)\in[a,b]$ contradicts the maximality of $\upbeta$. Thus $\H^a(\Phi_\upbeta(x))=0$, as claimed.

We conclude that $\Phi_\upbeta(x)\in[a+1,b]\!]$, and so the complex $\Phi_\upbeta(x)$ is strictly shorter than that of $x$.  
Replacing $x$ by $\Phi_\upbeta(x)$ we can then repeat the above argument, 
and by doing this find a composition $\upgamma$ such that $\Phi_\upgamma(x)$ lies in precisely one homological degree.  
As in the first paragraph, repeatedly applying either $\Phi_\ell$ or $\Phi_\ell^{-1}$ results in a module, over the relevant $\mathrm{A}_{\chamD,\con}$, in degree zero.
\end{proof}

From the viewpoint of our applications in \S\ref{sec4: geo cors}, the following consequence of \ref{make module} is important.  The power in the statement, and the reason that the proof is a little involved, is precisely since part (1) considers direct sums.  We explain  in \ref{rem: how to improve} that for the minimal resolution, and other special cases, an easier proof of (2) exists.

\begin{cor}\label{cor 2 3folds}
Suppose that $x\in\scrC_\chamC$ satisfies $\Hom( x, x[i])=0$ for all $i<0$.
\begin{enumerate}
\item\label{cor 2 3folds 1} If $\dim_{\mathbb{C}}\Hom(x_i, x_j)\cong \mathbb{C}\updelta_{ij}$ for all indecomposable summands $x_i, x_j$ of $x$, then there exists a subset $\scrI$ and a path $\upgamma\colon \chamC\to\mathsf{E}$ such that $\Phi_\upgamma(x)\cong \bigoplus_{i\in \scrI}\scrS_{i}$.
\item\label{cor 2 3folds 2} If $\dim_{\mathbb{C}}\Hom(x, x)\cong \mathbb{C}$, then there exists $\upgamma\colon \chamC\to\mathsf{E}$ and $i$ such that $\Phi_\upgamma(x)\cong \scrS_i$.
\end{enumerate}
\end{cor}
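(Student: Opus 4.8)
The plan is to derive both parts from Theorem~\ref{make module}, and to extract (2) from (1) at once. For (2): the hypothesis $\dim_{\mathbb C}\Hom(x,x)=\mathbb C$ forces $x$ to be indecomposable with $\End_{\scrC}(x)=\mathbb C$, so the hypothesis of (1) holds with $x$ as its own unique indecomposable summand; part (1) then yields $\upgamma$ with $\Phi_\upgamma(x)\cong\bigoplus_{i\in\scrI}\scrS_i$, and since $\Phi_\upgamma$ is an equivalence this object is indecomposable, so $|\scrI|=1$ and $\Phi_\upgamma(x)\cong\scrS_i$. Hence it suffices to prove (1).

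For (1), first apply Theorem~\ref{make module} to $x$: there is $\upbeta\colon\chamC\to\chamD$ with $M:=\Phi_\upbeta(x)$ an object of $\fmod\mathrm{A}_{\chamD,\con}$ concentrated in homological degree zero. Relabel $\chamD$ as $\chamC$ and set $A:=\mathrm{A}_{\chamC,\con}$. Since $\Phi_\upbeta$ is an equivalence it preserves $\Hom$-spaces between indecomposable summands, so $M$ is a semibrick in $\fmod A$: a multiplicity-free finite direct sum $M=\bigoplus_j M_j$ of bricks with $\Hom_A(M_j,M_k)=0$ for $j\neq k$. It remains to find a path carrying $M$ to a direct sum of distinct simples; composing with $\upbeta$ then produces the required $\upgamma$.

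I would do this by induction on $\dim_{\mathbb C}M$. If every $M_j$ is simple then multiplicity-freeness gives $M=\bigoplus_{i\in\scrI}\scrS_i$ for some subset $\scrI$, and we are done. Otherwise some summand has length $\geq 2$, and the crux is to find a path $\upgamma'$ with $\Phi_{\upgamma'}(M)$ again a semibrick module of strictly smaller $\mathbb C$-dimension; the inductive hypothesis then finishes. I do not expect a single elementary mutation to suffice — a simple tilt can throw cohomology into degree one, and dimension vectors are measured relative to whichever chamber one lands in — so the natural route is to mimic the proof of Theorem~\ref{make module}: fix a simple $\scrS_i$ involved with a non-simple summand ($\Hom$-orthogonality forces $\scrS_i$ not to be a summand of $M$), consider the finite poset of atoms $\upalpha$ for which $\Phi_\upalpha(M)$ remains a module, and take a maximal such $\upalpha$; then $\mathrm{pd}\,\mathrm{T}_\upalpha\leq 1$ from Lemma~\ref{lem:atoms}(1), the fact that each $\Phi_i$ realises a length-one HRS tilt with torsion-free class $\add\scrS_i$, and the $\Hom$-vanishing bookkeeping of Lemmas~\ref{magic lemma} and~\ref{lem:atoms} should combine to show that maximality forces $\Phi_\upalpha(M)$ to have strictly dropped the multiplicity of $\scrS_i$, hence its total dimension, with $\Hom$-orthogonality of the $M_j$ keeping the remaining summands from becoming entangled so that the output is again a semibrick. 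Finiteness of the poset of atoms makes the induction terminate, and once $M$ has been reduced to a sum of simples in a single homological degree, repeated application of $\Phi_\ell$ or $\Phi_\ell^{-1}$ (via Lemma~\ref{actions on some modules}(3)) puts it into degree zero, as in the last paragraph of the proof of Theorem~\ref{make module}.

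The main obstacle is exactly this inductive step, and inside it the direct sum: one must decrease $\dim_{\mathbb C}M$ while simultaneously controlling all summands $M_j$, and — unlike for a single brick — there is no "dimension vector is a root" shortcut available at this stage of the paper. An alternative line of attack I would keep in reserve is to pass to the wide subcategory $\mathcal W=\operatorname{Filt}(M_1,\dots,M_m)$ generated by the semibrick, whose simple objects are precisely the $M_j$, and to show that for the $\uptau$-tilting finite algebras $A$ occurring here every such $\mathcal W$ lies in the groupoid orbit of a standard parabolic subcategory $\langle\scrS_i\mid i\in\scrI\rangle$, so that the sum of its simples is carried to $\bigoplus_{i\in\scrI}\scrS_i$.
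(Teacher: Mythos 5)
Your reduction of (2) to (1) and your opening move (apply \ref{make module} to replace $x$ by a semibrick module $M$ over $\mathrm{A}_{\chamD,\con}$) coincide with the paper's first step. The problem is everything after that: the passage from a semibrick module to a direct sum of simples is exactly the content of the corollary, and your proposal only gestures at it. The inductive step you describe --- take a maximal atom $\upalpha$ among those keeping $M$ a module and argue that maximality forces the multiplicity of a chosen $\scrS_i$, hence $\dim_{\mathbb C}M$, to drop --- is not supported by any of \ref{lem:atoms}, \ref{magic lemma} or the argument of \ref{make module}: those results control cohomological spread with respect to the standard hearts, not module length, and once $M$ sits in a single degree they give no further leverage (the poset used in \ref{make module} degenerates, since $a=b$). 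There is no reason a wall crossing that keeps $M$ in the heart should decrease its dimension, and the paper itself flags in \ref{rem: how to improve} that even with a Crawley--Boevey-type ``dimension vector is a root'' input the direct-sum case (1) does not admit this kind of elementary reduction --- indeed the dimension-vector statement \ref{ePie dim vectors} is deduced \emph{from} \ref{cor 2 3folds}, so it cannot be fed into its proof. Your reserve strategy (wide subcategories generated by the semibrick lie in the groupoid orbit of parabolic subcategories) is essentially a restatement of what must be proved, not an argument.

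The paper closes this gap with external machinery rather than mutation bookkeeping. In the $3$-fold setting: $\Lambda_{\chamD,\con}$ is silting discrete \cite[3.12]{August1}, so by Asai \cite{Asai} the semibrick $y$ completes to a $2$-term simple minded collection, i.e.\ $y$ is a subset of the simples of an intermediate heart; by August \cite{August} every such heart is $F_\upalpha^{-1}(\fmod\Lambda_{\chamE,\con})$ for a single atom $\upalpha$, and the commutative diagram \eqref{Jenny comm diag} relating $F_\upalpha$ to $\Phi_\upalpha$ converts this into $\Phi_\upalpha(y)\cong\bigoplus_{i\in\scrI}\scrS_{i,\chamE}$. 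The surface cases are then handled not directly but by slicing: one constructs a $3$-fold flopping contraction whose generic hyperplane section recovers $e_\scrI\Uppi e_\scrI$, checks that restriction sends $y$ to a semibrick, applies the $3$-fold case, and uses that restriction reflects isomorphisms on modules. Your proposal contains neither the Asai/August input nor any substitute for it, and treats all three settings uniformly where the paper needs a genuinely different (reduction) argument for surfaces, so as it stands the proof is incomplete at its central step.
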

\begin{proof}
We prove (1), since (2) is a special case.  Consider first the $3$-fold setting. 

By \ref{make module}, there exists $\upbeta\colon\chamC\to\chamD$ such that $\Phi_\upbeta (x)\cong y$, where $y$ is a $\Lambda_{\chamD,\con}$-module in homological degree zero.  Now by assumption $x$ is a direct sum of Hom-orthogonal indecomposable summands, hence so too is $y$.  Thus in the language of \S\ref{sec: silting prelims}, $y$ is a semibrick in the category $\fmod\Lambda_{\chamD,\con}$.  Since $\Lambda_{\chamD,\con}$ is silting discrete \cite[3.12]{August1}, by \cite{Asai} $y$ is a subset of a 2-simple minded collection, which by definition and the Koenig--Yang bijections implies that $y$ is a subset of simples in the heart $\scrH$ of an intermediate bounded t-structure in $\Db(\fmod\Lambda_{\chamD,\con})$.

Now by \cite[1.1]{August}, for every atom $\upalpha\colon\chamD\to\chamE$ there exists an autoequivalence $F_\upalpha$ such that the following diagram commutes
\begin{equation}
\begin{array}{c}
\begin{tikzpicture}
\node (A) at (0,0) {$\Db(\fmod\Lambda_{\chamD,\con})$};
\node (B) at (3.5,0) {$\Db(\fmod\Lambda_\chamD)$};
\node (a) at (0,-1.5) {$\Db(\fmod\Lambda_{\chamE,\con})$};
\node (b) at (3.5,-1.5) {$\Db(\fmod\Lambda_\chamE)$};
\draw[->] (A)--node[above]{$\scriptstyle \rm res$}(B);
\draw[->] (A)--node[left]{$\scriptstyle F_\upalpha$}(a);
\draw[->] (B)--node[right]{$\scriptstyle \Phi_\upalpha$}(b);
\draw[->] (a)--node[above]{$\scriptstyle \rm res$}(b);
\end{tikzpicture}
\end{array}\label{Jenny comm diag}
\end{equation}
and further 
by \cite{August} the heart of every intermediate bounded t-structure in $\Db(\fmod\Lambda_{\chamD,\con})$ equals $F_{\alpha}^{-1}(\fmod\Lambda_{\chamE,\con})$ for some atom $\upalpha\colon \chamD\to\chamE$.  

Thus there exists a subset $\scrI$  such that $F_\upalpha^{-1}(\bigoplus_{i\in \scrI}\scrS_{i,\chamE})=y$. Since the diagram \eqref{Jenny comm diag} commutes, necessarily $\Phi_\upalpha(y)\cong\bigoplus_{i\in \scrI}\scrS_{i,\chamE}$, and so the path $\upalpha\circ\upbeta\colon \chamC\to\chamE$ satisfies $\Phi_{\upalpha\circ\upbeta}(x)=\Phi_{\upalpha}\Phi_\upbeta(x)=\Phi_\upalpha(y)=\bigoplus_{i\in \scrI}\scrS_{i,\chamE}$, as required. This completes the proof in the $3$-fold setting.

\medskip
Consider now the surfaces setting.  By \ref{make module}, there exists $\upbeta\colon\chamC\to\chamD$ such that $\Phi_\upbeta (x)\cong y$, where $y$ is a $(e_\scrI\Uppi e_{\scrI})_{\con}$-module in homological degree zero.  Now $\scrC_\chamD\subset\Db(\fmod e_\scrI\Uppi e_\scrI)$ say, where $\Uppi$ is the preprojective algebra of affine ADE, and $e_\scrI$ is some idempotent.  As in \cite[\S3]{Toda} or \cite[\S4.3]{KM}, we can construct a $3$-fold flopping contraction $\scrX\to\Spec\scrR$, where $\scrX$ has Gorenstein terminal singularities (where $\scrX$ tilts to $\Lambda_\chamD$), such that the generic hyperplane section $g\in\scrR$ satisfies $(\Lambda_\chamD)/g\cong e_\scrI\Uppi e_\scrI$. 

Furthermore, for any atom $\upalpha\colon \chamD\to\chamE$, then \cite{IW9} shows that the following diagram commutes. 
\[
\begin{tikzpicture}
\node (A) at (0,0) {$\Db(\fmod e_{\scrI}\Uppi e_{\scrI})$};
\node (B) at (3.5,0) {$\Db(\fmod\Lambda_\chamD)$};
\node (a) at (0,-1.5) {$\Db(\fmod e_{\scrK}\Uppi e_{\scrK})$};
\node (b) at (3.5,-1.5) {$\Db(\fmod\Lambda_\chamE)$};
\draw[->] (A)--node[above]{$\scriptstyle \rm res$}(B);
\draw[->] (A)--node[left]{$\scriptstyle\Phi_\upalpha$}(a);
\draw[->] (B)--node[right]{$\scriptstyle\Phi_\upalpha$}(b);
\draw[->] (a)--node[above]{$\scriptstyle \rm res$}(b);
\end{tikzpicture}
\]
Since $y$ is a module in homological degree zero, $\mathrm{res}(y)\in\scrC_\chamD$ is likewise still a module, and thus it has no negative Exts.  Further by adjunction 
\[
\Hom_{\scrC}(\mathrm{res}(y_i),\mathrm{res}(y_j))\cong\mathbb{C}\updelta_{ij} 
\] 
for all indecomposable summands $y_i, y_j$, and thus $\mathrm{res}(y)$ is a semibrick.  The $3$-fold proof above shows that there exists some atom $\upalpha$ such that $\Phi_\upalpha(\mathrm{res}(y))\cong \scrU$, where $\scrU=\bigoplus_{i\in \scrI}\scrS_i$ for some subset $\scrI$.  The commutativity of the diagram implies that $\mathrm{res}(\Phi_\upalpha(y))\cong \scrU$. But also $\mathrm{res}(\scrU)=\scrU$, so since $\mathrm{res}$ reflects isomorphisms when restricted to the module category, $\Phi_\upalpha(y)\cong  \scrU$ follows.
\end{proof}

\subsection{Algebraic Corollaries}\label{subsec: alg cors}
 It is a result of Crawley--Boevey \cite[Lemma 1]{BCB} that every brick for the preprojective algebra $\Uppi$ of type ADE has dimension vector equal to a root of the corresponding Dynkin diagram.  This subsection generalises this to contracted preprojective algebras $\Upgamma=e_\scrI\Uppi e_\scrI$, and also to contraction algebras $\Lambda_{\con}$. 

Recall the notion of restricted roots from \S\ref{subsec:arrangements}, and the conventions in \ref{rem: subset conventions} and \eqref{eqn: idempotent convention}.  The following is then a consequence of \ref{cor 2 3folds}.
\begin{cor}\label{ePie dim vectors}
Consider the projective algebra $\Uppi$ of type \textnormal{ADE}, and $\scrI\subset\Updelta$.  
Then every brick in $\fmod e_\scrI\Uppi e_\scrI$ has dimension vector equal to a primitive restricted root. 
\end{cor}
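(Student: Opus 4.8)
The plan is to deduce Corollary~\ref{ePie dim vectors} from Corollary~\ref{cor 2 3folds} by exploiting that the mutation functors $\Phi_\upalpha$ act on the Grothendieck group by (restricted) reflections, and that they send the simples $\scrS_{i}$ to shifts of simples.

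\medskip
\noindent\textbf{Setup.} Let $B$ be a brick in $\fmod e_\scrI\Uppi e_\scrI$; that is, $\End(B)\cong\mathbb{C}$. Fix the chamber $\chamC$ for which $\mathrm{A}_{\chamC,\con}=e_\scrI\Uppi e_\scrI/(e_0)$ is the standard heart inside $\scrC_\chamC\subset\Db(\fmod e_\scrI\Uppi e_\scrI)$. Note that a finite-dimensional $e_\scrI\Uppi e_\scrI$-module automatically lies in $\scrC_\chamC$ (its cohomology is killed by $e_0$ by the definition in \S\ref{sec: bij MMP}), so $B\in\scrC_\chamC$, viewed in homological degree zero. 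Since $B$ is a module it has no negative self-extensions, and $\Hom_{\scrC}(B,B)=\End(B)=\mathbb{C}$. Thus Corollary~\ref{cor 2 3folds}\eqref{cor 2 3folds 2} applies: there is a path $\upgamma\colon\chamC\to\mathsf{E}$ and an index $i$ with $\Phi_\upgamma(B)\cong\scrS_{i,\mathsf{E}}$.

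\medskip
\noindent\textbf{Passing to $K$-theory.} The functor $\Phi_\upgamma$ is an equivalence, hence induces an isomorphism $[\Phi_\upgamma]\colon K_0(\scrC_\chamC)\xrightarrow{\sim}K_0(\scrC_\mathsf{E})$. Identifying each $K_0(\scrC_\chamD)$ with the root lattice $\mathfrak{h}_\scrI$ (equivalently $\mathbb{Z}^{\scrIc}$, the lattice spanned by the classes of the simples, which correspond to $\uppi_\scrI(\upalpha_j)$ for $j\in\scrIc$), the generating wall-crossing functors $\Phi_j$ act by the ``restricted reflections'' $s_j$ on $\mathfrak{h}_\scrI$ — this is exactly the content of Lemma~\ref{actions on some modules}\eqref{actions on some modules 1} together with the triangle defining the tilting module $\mathrm{T}_j$ of projective dimension one (Lemma~\ref{lem:atoms}\eqref{lem:atoms 0}), and is recorded in \cite{IW9}. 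Consequently $[\Phi_\upgamma]$ is an element of the group generated by these restricted reflections, which is precisely the image $\overline{W}$ of the Weyl group $W$ of $\Updelta$ under $\uppi_\scrI$. Therefore
\[
\underline{\dim}\, B \;=\; [\Phi_\upgamma]^{-1}\bigl(\underline{\dim}\,\scrS_{i,\mathsf{E}}\bigr)\;=\;w\bigl(\uppi_\scrI(\upalpha_i)\bigr)
\]
for some $w\in W$ (acting on $\mathfrak{h}_\scrI$ through $\overline{W}$), up to sign; but since $B$ is an actual module its dimension vector is a non-negative combination of the $\uppi_\scrI(\upalpha_j)$, so no sign ambiguity survives.

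\medskip
\noindent\textbf{Conclusion.} It remains to observe that $w(\uppi_\scrI(\upalpha_i))$, for $w\in W$ and $i\in\scrIc$, is always a primitive restricted root. Write $w(\uppi_\scrI(\upalpha_i))=\uppi_\scrI(w'\upalpha_i)$ for a suitable lift $w'\in W$ of the relevant element (using that $\uppi_\scrI$ is $W$-equivariant via the standard parabolic action, or more simply using that $\overline{W}$ is a quotient-compatible image of $W$); then $w'\upalpha_i$ is a positive root (its image is a non-negative vector, hence $w'\upalpha_i$ is positive), so $\uppi_\scrI(w'\upalpha_i)$ is a restricted positive root by definition. Primitivity: if $\uppi_\scrI(w'\upalpha_i)=k\upbeta$ for some restricted positive root $\upbeta$ and integer $k\geq 2$, then pulling back along $w'^{-1}$ we would get $\upalpha_i = k\,\uppi_\scrI(\text{something positive}) \bmod \ker\uppi_\scrI$, forcing the simple root $\upalpha_i$ to be a multiple of another positive root modulo $\mathrm{span}\{\upalpha_j : j\in\scrI\}$; a short root-system argument rules this out (alternatively, invoke directly that $\scrS_{i,\mathsf{E}}$ is a brick and that $\overline{W}$ permutes primitive restricted roots, since primitivity is detected by indecomposability/rigidity which the equivalence $\Phi_\upgamma$ preserves). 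Either way $\underline{\dim}\,B$ is a primitive restricted root.

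\medskip
\noindent\textbf{Main obstacle.} The non-formal point is the identification of the $K_0$-action of the mutation functors with the restricted reflection action on $\mathfrak{h}_\scrI$, and in particular that the subgroup they generate is exactly $\overline{W}=\uppi_\scrI(W)$ — one must be slightly careful because $\uppi_\scrI$ collapses some roots and the naive ``reflection'' $s_j$ on $\mathfrak{h}_\scrI$ is only defined using the restricted bilinear data. I expect this to follow cleanly from Lemma~\ref{lem:atoms}\eqref{lem:atoms 0} and the results of \cite{IW9} already cited in the excerpt, but it is the step that genuinely uses the structure of contracted preprojective algebras rather than just the abstract statement of Corollary~\ref{cor 2 3folds}. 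The final primitivity bookkeeping is routine once the $W$-equivariance of $\uppi_\scrI$ is in hand.
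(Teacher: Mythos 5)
Your high-level strategy is the same as the paper's: apply \ref{cor 2 3folds}\eqref{cor 2 3folds 2} to realise the brick as $\Phi_\upbeta(\scrS_i)$ up to equivalence, then track dimension vectors through the induced map on $\mathrm{K}_0$. The gap is in how you execute the $\mathrm{K}_0$ step, and it is genuine. First, the framing is ill-posed: $\uppi_\scrI$ is not $W$-equivariant (its kernel, the span of $\{\upalpha_i\mid i\in\scrI\}$, is stable only under the parabolic subgroup $W_\scrI$), so ``$\overline{W}=\uppi_\scrI(W)$'' is not a group acting on $\mathfrak{h}_\scrI$, and the wall-crossing functors do not act by reflections in the hyperplanes of the intersection arrangement --- the paper notes after \ref{ex: Dynkin res} that this arrangement need not even be Coxeter, and moreover the categories (hence the lattices) change from chamber to chamber, as in \ref{ex: pure surfaces}, so one is composing maps between different quotients $\mathfrak{h}_{\scrJ}$ rather than working inside a single reflection group. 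Second, and more seriously, the primitivity step fails: it is false that $\uppi_\scrI(w'\upalpha_i)$ is primitive for arbitrary $w'\in W$ and $i\in\scrIc$. In \ref{ex: Dynkin res}, the highest root of $D_5$ has coefficient $2$ at both pink nodes, so its image is $22=2\cdot 11$, a non-primitive restricted root; since in a simply-laced system all roots lie in one $W$-orbit, there is $w'\in W$ taking the simple root at a pink node to the highest root. So knowing only that $\underline{\dim}\,B$ is the image under $\uppi_\scrI$ of a $W$-translate of a simple root does not give primitivity. Your fallback claims are also unavailable: ``$\overline{W}$ permutes primitive restricted roots'' fails by the same example, and ``primitivity is detected by indecomposability'' is circular, since the assertion that a brick has primitive dimension vector is exactly what is being proved.

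What the paper does at this point is cite precisely the input you flag as your ``main obstacle'' and then do not supply: by \cite[\S5.1, Rem 5.12]{NW}, the action of $\Phi_\upbeta$ on $\mathrm{K}_0$ is a composition of explicit matrices $\mathsf{N}_i$, each of which takes primitive restricted roots to primitive restricted roots; since $\mathbf{e}_i$ is a primitive restricted root, so is $\underline{\dim}\,B$. This is a statement about the particular wall-crossing matrices, not about an ambient reflection group, and it does not follow from \ref{lem:atoms}\eqref{lem:atoms 0} together with general root-system arguments. To repair your proof you would need either to quote that result or to reprove the primitivity-preservation of the $\mathsf{N}_i$ directly.
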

\begin{proof}
If $x\in\fmod\Lambda_{\con}$ is a brick, then by \ref{cor 2 3folds} there exists $\upbeta$ such that $\Phi_\upbeta(\scrS_i)\cong x$.  Writing $\upvarphi_\upbeta$ for the action of $\Phi_\upbeta$ on $\mathrm{K}_0(\scrC)$, it follows that $\upvarphi_\upbeta(\mathbf{e}_i)=\underline{\mathrm{dim}}\,x$.  But, as explained in e.g.\ \cite[\S5.1, Rem 5.12]{NW}, $\upvarphi_\upbeta$ can be written as a  composition of matrices $\mathsf{N}_i$, each of which takes primitive restricted roots to primitive restricted roots.  Since $\mathbf{e}_i$ is clearly a primitive restricted root, so too is $\underline{\mathrm{dim}}\,x$.
\end{proof}

\begin{rem}
It should also be possible to prove \ref{ePie dim vectors} using the fact that $c$-vectors are the dimension vectors of bricks, and these are dual to $g$-vectors \cite{Hipolito}.  The $g$-vectors for some $e_\scrI\Uppi e_\scrI$ are known \cite[\S6]{IW9}, but these results currently require some restrictions on $\scrI$. 
\end{rem}

Recall that to every contraction algebra $\Lambda_{\con}$, there exists a central element $g$ in the radical of $\Lambda_{\con}$ such that $\Lambda_{\con}/(g)\cong e_\scrI\Uppi e_\scrI$ for some $\scrI\subset\Updelta$, where $\Updelta$ is ADE \cite{DW1,DW3}.
\begin{cor}\label{cor contraction brick dim}
If $\Lambda_{\con}$ is the contraction algebra associated to a $3$-fold flopping contraction $\scrX\to\Spec\scrR$ where $\scrX$ is at worst Gorenstein terminal, then every brick in $\fmod\Lambda_{\con}$ has dimension vector equal to a primitive restricted root.  
\end{cor}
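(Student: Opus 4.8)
The plan is to reduce \ref{cor contraction brick dim} to the already-established \ref{ePie dim vectors} by means of the central element $g$, exactly as in the statement's setup. First I would recall that for a contraction algebra $\Lambda_{\con}$ coming from $\scrX\to\Spec\scrR$ with $\scrX$ Gorenstein terminal, there is a central element $g$ in the radical of $\Lambda_{\con}$ with $\Lambda_{\con}/(g)\cong e_\scrI\Uppi e_\scrI$ for some $\scrI\subset\Updelta$ with $\Updelta$ of type ADE, by \cite{DW1,DW3}; moreover $\Lambda_{\con}$ and $e_\scrI\Uppi e_\scrI$ have the same number of vertices (both indexed by $\scrIc$), so that $\mathrm{K}_0$ is identified and the notion of restricted root for the two algebras agrees. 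Thus it suffices to show that any brick $x\in\fmod\Lambda_{\con}$ has the same dimension vector as some brick in $\fmod e_\scrI\Uppi e_\scrI$.

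The key step is that multiplication by the central element $g$ on a brick $x$ is either an isomorphism or zero: since $\End_{\Lambda_{\con}}(x)=\mathbb{C}$, the image of $g$ under $\Lambda_{\con}\to\End_{\Lambda_{\con}}(x)$ is a scalar, and because $g$ lies in the radical (hence acts nilpotently on the finite-dimensional module $x$) that scalar must be $0$. Therefore $g$ annihilates $x$, so $x$ is in fact a module over $\Lambda_{\con}/(g)\cong e_\scrI\Uppi e_\scrI$. Since $\End$ is computed the same way over a quotient ring, $x$ remains a brick over $e_\scrI\Uppi e_\scrI$, and its dimension vector is unchanged under the identification of $\mathrm{K}_0$'s. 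Now \ref{ePie dim vectors} applies directly and gives that $\underline{\mathrm{dim}}\,x$ is a primitive restricted root.

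Alternatively — and this may be the cleaner route given the machinery already developed — one can argue directly as in the proof of \ref{ePie dim vectors}: by \ref{cor 2 3folds}\eqref{cor 2 3folds 2}, any brick $x\in\fmod\Lambda_{\con}$ is $\Phi_\upbeta(\scrS_i)$ for some simple $\scrS_i$ and some path $\upbeta$ in the groupoid, since $\scrS_i$ is the image of $\scrO_{\Curve_i}(-1)$ and $x$ satisfies $\Hom(x,x)=\mathbb{C}$ with no negative self-extensions. Writing $\upvarphi_\upbeta$ for the induced map on $\mathrm{K}_0(\scrC)$, one has $\upvarphi_\upbeta(\mathbf{e}_i)=\underline{\mathrm{dim}}\,x$, and by \cite[\S5.1]{NW} (or the analogous matrix description in the flops setting) $\upvarphi_\upbeta$ is a composition of the matrices $\mathsf{N}_i$ preserving primitive restricted roots; as $\mathbf{e}_i$ is one, so is $\underline{\mathrm{dim}}\,x$.

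The main obstacle, and the only point requiring care, is verifying that the combinatorial notion of \emph{restricted root} is genuinely the same for $\Lambda_{\con}$ as for $e_\scrI\Uppi e_\scrI$ — that is, that the subset $\scrI\subset\Updelta$ attached to the flop $\scrX\to\Spec\scrR$ via the generic hyperplane section in \S\ref{subsec:arrangements}(3) is the same $\scrI$ appearing in $\Lambda_{\con}/(g)\cong e_\scrI\Uppi e_\scrI$, and that the $\mathrm{K}_0$ identification intertwines the two descriptions of restricted roots. This is essentially built into the construction of the intersection arrangement and is exactly the compatibility underlying \ref{ePie dim vectors}, so it should require only a citation to \cite{DW1,DW3,IW9} together with the conventions of \ref{rem: subset conventions}; no new computation is needed. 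Once this bookkeeping is in place, either of the two arguments above closes the proof immediately.
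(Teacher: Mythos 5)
Your proposal is correct and is essentially the paper's proof: the paper likewise deduces the result from \ref{ePie dim vectors} by passing to $\Lambda_{\con}/(g)\cong e_\scrI\Uppi e_\scrI$, merely citing the reduction theorems of \cite[\S4]{EJR} and \cite[1.29]{Asai} for the statement that bricks over $\Lambda_{\con}$ and over $\Lambda_{\con}/(g)$ coincide. Your self-contained observation that the central radical element $g$ acts on a brick as a nilpotent scalar, hence as zero, supplies exactly the direction of that reduction which is needed, and your alternative second route is simply the argument of \ref{ePie dim vectors} run directly in the $3$-fold setting.
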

\begin{proof}
This is an immediate consequence of \ref{ePie dim vectors}, since by the standard reduction theorems \cite[\S4]{EJR} and \cite[1.29]{Asai}, bricks on $\Lambda_{\con}$ are the same as bricks for $\Lambda_{\con}/(g)$.
\end{proof}

\begin{rem}\label{rem: how to improve}
The proof of \ref{cor 2 3folds}\eqref{cor 2 3folds 2}, but alas not \ref{cor 2 3folds}\eqref{cor 2 3folds 1}, can be simplified in the case of the minimal resolution, or in the case of $3$-fold crepant resolutions which slice to the minimal resolution.  Indeed, in the case of the minimal resolution Crawley--Boevey \cite[Lemma 1]{BCB} implies that every brick $x$ has dimension vector equal to a real root.  Hence there exists a sequence of Weyl reflections such that $s_{i_t}\cdot\hdots\cdot s_{i_1}\cdot\,\underline{\mathrm{dim}}(x)=\mathbf{e}_i$ for some $i$.  

Now, since $x$ is a brick, either $\Hom(x,\scrS_{i_1})=0$ or $\Hom(\scrS_{i_1},x)=0$.  Set $F_1\colonequals\Phi_{i_1}$ in the first case, or $F_1\colonequals\Phi_{i_1}^{-1}$ in the second, then $F_1(x)$ is a module, of dimension vector $s_{i_1}\cdot\,\underline{\mathrm{dim}}(x)$.  Clearly $F_1(x)$ is still a brick, so we can repeat the argument. At each stage, we can choose $F_j$ equal to either $\Phi_{i_j}$ or $\Phi_{i_j}^{-1}$ to guarantee that $F_j\hdots F_1(x)$ remains a module, and by doing this $F_t\hdots F_1(x)$ is a module of dimension vector equal to $\mathbf{e}_i$, a simple root.  Thus $F_t\hdots F_1(x)\cong\scrS_i$. If the analogue of Crawley--Boevey's result \ref{ePie dim vectors} can be proved first, then the above argument simplifies the proof of \ref{cor 2 3folds}\eqref{cor 2 3folds 2} in all cases.
\end{rem}

\section{Classification of \textnormal{t}-Structures}\label{sec: t-st classification}

The previous section proved that for every object $x\in\scrC$ with no negative Exts, there exists $\Phi_\upbeta$ such that $\Phi_\upbeta(x)$ is concentrated in homological degree zero.  In particular,  $x\in\scrC$ belongs to a standard heart in the groupoid.   A second and related problem is whether, given the heart $\scrH$ of an arbitrary bounded t-structure, there exists $\Phi_\upgamma$ such that $\Phi_\upgamma(\scrH)$ is a standard heart.   Whilst the problems are similar,  one does not obviously imply the other.  A priori, the heart $\scrH$ does not have finite length, and so the element-wise arguments of the previous section do not apply.  

This section strengthens the results of the previous section to also solve the second problem, and thus fully classifies all bounded t\nobreakdash-structures on $\scrC$ in terms of the orbits under the groupoid action.  This is new in all cases, even for the minimal resolution $\scrZ\to\mathbb{C}^2/\Gamma$.

\subsection{t-Structure Generalities}
Let $\scrT$ be a triangulated category with full additive subcategory $\scrH$.  Then, as explained in e.g.\ \cite[2.3]{Bridgeland_local}, $\scrH$ is the heart of a bounded t-structure on $\scrT$ if and only if 
\begin{enumerate}
\item\label{t1} $\Hom_\scrT(x,y[j])=0$ for all $x,y\in\scrH$ and all $j<0$.
\item\label{t2} For every non-zero object $x\in\scrT$ there exists integers $a\leq b$ and triangles  
\[
\begin{tikzpicture}
\node (A) at (2,0) {$0=x_{a-1}$};
\node (B) at (4,0) {$x_a$};
\node (C) at (6,0) {$x_{a+1}$};
\node (E) at (8,0) {$\hdots$};
\node (F) at (10,0) {$x_{b-1}$};
\node (G) at (12,0) {$x_{b}=x$};
\node (a) at (3,-1) {$h_{a}$};
\node (b) at (5,-1) {$h_{a+1}$};
\node (c) at (7,-1) {$h_{a+2}$};
\node (f) at (11,-1) {$h_{b}$};
\draw[->] (A)--(B);
\draw[->] (B)--(C);
\draw[->] (C)--(E);
\draw[->] (E)--(F);
\draw[->] (F)--(G);
\draw[->](B)--(a);
\draw[->](C)--(b);
\draw[->](G)--(f);
\draw[densely dotted,->] (a)--(A);
\draw[densely dotted,->] (b)--(B);
\draw[densely dotted,->] (c)--(C);
\draw[densely dotted,->] (f)--(F);
\end{tikzpicture}
\]
such that $h_i[i]\in\scrH$ for all $i$.
\end{enumerate}
As is standard, the objects $h_i[i]\in\scrH$ are called the cohomology objects of $x$ with respect to $\scrH$, and will be written $\H_\scrH^i(x)\colonequals h_i[i]$. The following mirrors \ref{nota: squares 1}.

\begin{nota}\label{not: 5.1}
Let $\scrH$ be the heart of a bounded t-structure in a triangulated category $\scrT$.  For integers $c\leq d$, write $x\in [c,d]_{\scrH}$ to mean $\H_\scrH^i(x)=0$ for all $i<c$ and for all $i>d$.  Write $x\in [\![c,d]_{\scrH}$ if $x\in [c,d]_{\scrH}$ and furthermore $\H^c_\scrH(x)\neq 0$.
\end{nota}
As in \S\ref{sec: atoms and simples} we will use other self-documenting variations, such as $[c,d[\!]_{\scrH}$ and $[\![c,d[\!]_{\scrH}$. The following generalises \ref{magic lemma}, and satisfyingly its proof avoids the use of spectral sequences.

\begin{lem} \label{magic lemma Db 2}
Let $\scrH$ be the heart of a bounded t-structure in a triangulated category $\scrT$. If $x,y\in\scrT$ with $x\in [\![ a,b]\!]_{\scrH}$ and $y\in [\![ c,d]\!]_{\scrH}$, then
\[
\Hom_{\scrT}(y,x[a-d])\cong \Hom_{\scrT}(\H_\scrH^d(y),\H_\scrH^a(x))
\]
\end{lem}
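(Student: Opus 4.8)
The plan is to reduce the statement to the extremal cohomology objects by a double truncation, using only the t-structure axioms and the defining vanishing property of a heart. First I would set up notation: let $\tau_{\leq k}$ and $\tau_{\geq k}$ denote the truncation functors for the bounded t-structure with heart $\scrH$, so that for any object $z$ one has a functorial triangle $\tau_{\leq k}z\to z\to\tau_{\geq k+1}z\to{}$, with $\tau_{\leq k}z\in[\![ *,k]_\scrH$ (supported in degrees $\leq k$) and $\tau_{\geq k+1}z$ supported in degrees $\geq k+1$. The key elementary input, which I would isolate as a sub-observation, is that if $z$ is supported in degrees $\leq p$ and $w$ is supported in degrees $\geq q$ with $q\geq p$, then $\Hom_\scrT(z,w[m])=0$ whenever $m<q-p$, and when $m=q-p$ one has $\Hom_\scrT(z,w[q-p])\cong\Hom_\scrT(\H^p_\scrH(z),\H^q_\scrH(w))$. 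This is proved by dévissage: filtering $z$ and $w$ by their cohomology objects via the towers in axiom~\eqref{t2}, every Hom between shifted hearts $\Hom_\scrT(\scrH[-i],\scrH[-j][m])$ vanishes unless $m\geq i-j$ by axiom~\eqref{t1}, and the only surviving contribution at the bottom degree comes from $i=p$, $j=q$, giving exactly $\Hom_\scrT(\H^p_\scrH(z),\H^q_\scrH(w))$ since $\Hom_\scrT(\scrH,\scrH)$ computes morphisms in the heart.

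With this in hand, the main argument is two applications of truncation. Given $x\in[\![ a,b]\!]_\scrH$ and $y\in[\![ c,d]\!]_\scrH$, apply $\Hom_\scrT(-,x[a-d])$ to the triangle $\tau_{\leq d-1}y\to y\to \H^d_\scrH(y)[-d]\to{}$. The object $\tau_{\leq d-1}y$ is supported in degrees $\leq d-1$, while $x[a-d]$ is supported in degrees $\geq a-(a-d)=d$ and in degrees $\leq b+d-a$; hence $\Hom_\scrT(\tau_{\leq d-1}y, x[a-d])=0$ and $\Hom_\scrT(\tau_{\leq d-1}y, x[a-d-1])=0$ by the sub-observation (the relevant degree gaps are $d-(d-1)=1>0$ and $>1$ respectively — one must check the gap $q-p$ is strictly larger than the shift $m$; since $c\le d$ truncation indeed lands strictly below $d$ provided $\H^d$ is a genuine nonzero summand, which is the hypothesis $y\in[\![c,d]\!]_\scrH$). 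Therefore the long exact sequence collapses to $\Hom_\scrT(y,x[a-d])\cong\Hom_\scrT(\H^d_\scrH(y)[-d],x[a-d])=\Hom_\scrT(\H^d_\scrH(y),x[a])$. Next, apply $\Hom_\scrT(\H^d_\scrH(y),-[a])$ to the triangle $\H^a_\scrH(x)[-a]\to x\to \tau_{\geq a+1}x\to{}$: now $\H^d_\scrH(y)$ sits in degree $d$ and $(\tau_{\geq a+1}x)[a]$ is supported in degrees $\geq 1$... here I should be careful and instead write $\Hom_\scrT(\H^d_\scrH(y)[-d], x[a-d])$ against $\H^a_\scrH(x)[-a][a-d]=\H^a_\scrH(x)[-d]$ and $(\tau_{\geq a+1}x)[a-d]$ supported in degrees $\geq a+1+a-d$; the cleanest bookkeeping is to keep everything shifted so both truncations are compared directly, and then the sub-observation gives $\Hom_\scrT(y,x[a-d])\cong\Hom_\scrT(\H^d_\scrH(y),\H^a_\scrH(x))$, which is the claim.

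The main obstacle I anticipate is purely organisational rather than conceptual: getting the degree bookkeeping exactly right so that at each truncation step the relevant $\Hom$ and $\Hom[-1]$ (or $\Hom[+1]$) both vanish, and that the one surviving term is precisely a $\Hom$ in the heart and not, say, an $\Ext^1$. The hypothesis that $x,y$ are \emph{sharply} supported, i.e. $x\in[\![a,b]\!]_\scrH$ and $y\in[\![c,d]\!]_\scrH$ (so $\H^a_\scrH(x)$, $\H^b_\scrH(x)$, $\H^c_\scrH(y)$, $\H^d_\scrH(y)$ are all nonzero), is what guarantees the truncations $\tau_{\leq d-1}y$ and $\tau_{\geq a+1}x$ are genuinely supported strictly away from degrees $d$ and $a$ respectively, so the degree gaps are strict and axiom~\eqref{t1} forces the vanishings. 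I would present the dévissage sub-observation as a short separate paragraph (it is the analogue, without spectral sequences, of the Hom spectral sequence used in \ref{magic lemma}), and then the two-step truncation as the body of the proof. One small point to flag: the identification $\Hom_\scrT(A[-d],B[-d])\cong\Hom_\scrT(A,B)\cong\Hom_\scrH(A,B)$ for $A,B\in\scrH$ uses that $\scrH$ is a full subcategory of $\scrT$, which is immediate from the definition of the heart.
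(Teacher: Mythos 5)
Your proposal is correct and takes essentially the same route as the paper: a spectral-sequence-free d\'evissage using the filtration triangles of axiom \eqref{t2} together with the orthogonality forced by \eqref{t1}, reducing one object to its extreme cohomology and then the other (you truncate $y$ to $\H^d_\scrH(y)[-d]$ first and then $x$ to $\H^a_\scrH(x)[-a]$, while the paper first walks the whole tower of $x$ via chained long exact sequences and then peels the top triangle off $y$, but the content is identical). The only blemish is the sign slip in your aside --- $(\tau_{\geq a+1}x)[a-d]$ is supported in degrees $\geq d+1$, not $\geq a+1+a-d$ --- which is harmless, since your first formulation (testing $\H^d_\scrH(y)\in\scrH$ against $(\tau_{\geq a+1}x)[a]$, supported in degrees $\geq 1$) was already the correct bookkeeping.
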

\begin{proof}
We prove the case when $a<b$ and $c<d$, since the other cases are degenerate. To ease notation set $\H^i=\H^i_\scrH$, and consider the sequence of triangles \eqref{t2} for $x$, and the analogous sequence
\begin{equation}
\begin{array}{c}
\begin{tikzpicture}
\node (A) at (2,0) {$0=y_{c-1}$};
\node (B) at (4,0) {$y_c$};
\node (C) at (6,0) {$y_{c+1}$};
\node (E) at (8,0) {$\hdots$};
\node (F) at (10,0) {$y_{d-1}$};
\node (G) at (12,0) {$y_{d}=y$};
\node (a) at (3,-1) {$h'_{c}$};
\node (b) at (5,-1) {$h'_{c+1}$};
\node (c) at (7,-1) {$h'_{c+2}$};
\node (f) at (11,-1) {$h'_{d}$};
\draw[->] (A)--(B);
\draw[->] (B)--(C);
\draw[->] (C)--(E);
\draw[->] (E)--(F);
\draw[->] (F)--(G);
\draw[->](B)--(a);
\draw[->](C)--(b);
\draw[->](G)--(f);
\draw[densely dotted,->] (a)--(A);
\draw[densely dotted,->] (b)--(B);
\draw[densely dotted,->] (c)--(C);
\draw[densely dotted,->] (f)--(F);
\end{tikzpicture}
\end{array}
\label{triangles for y}
\end{equation}
for $y$. Using the vanishing of negative Ext groups \eqref{t1}, applying $\Hom_\scrT(-,\scrH)$ it is easy to verify that
\begin{equation}
\Hom_\scrT(x_t,\scrH[-t-j])=0=\Hom_\scrT(y_t,\scrH[-t-j])\mbox{ for all }j\geq 1.\label{induction vanishing}
\end{equation}
Now applying $\Hom_\scrT(y[d],-)$ to the sequence of triangles \eqref{t2} for $x$, and temporarily dropping $\Hom$ from the notation, gives the following exact sequences.
\[
\begin{tikzpicture}
\node (a1) at (0,0) {\small${}_\scrT(y[d],h_b[a-1])$};
\node (a2) at (3.5,0) {\small${}_\scrT(y[d],x_{b-1}[a])$};
\node (a3) at (6.5,0) {\small${}_\scrT(y[d],x[a])$};
\node (a4) at (10,0) {\small${}_\scrT(y[d],h_b[a])$};
\node (b1) at (0,-0.5) {\small${}_\scrT(y[d],h_{b-1}[a-1])$};
\node (b2) at (3.5,-0.5) {\small${}_\scrT(y[d],x_{b-2}[a])$};
\node (b3) at (6.5,-0.5) {\small${}_\scrT(y[d],x_{b-1}[a])$};
\node (b4) at (10,-0.5) {\small${}_\scrT(y[d],h_{b-1}[a])$};
\node at (6.5,-0.9) {$\vdots$};
\node (c1) at (0,-1.5) {\small${}_\scrT(y[d],h_{a+2}[a-1])$};
\node (c2) at (3.5,-1.5) {\small${}_\scrT(y[d],x_{a+1}[a])$};
\node (c3) at (6.5,-1.5) {\small${}_\scrT(y[d],x_{a+2}[a])$};
\node (c4) at (10,-1.5) {\small${}_\scrT(y[d],h_{a+2}[a])$};
\node (d1) at (0,-2) {\small${}_\scrT(y[d],h_{a+1}[a-1])$};
\node (d2) at (3.5,-2) {\small${}_\scrT(y[d],x_{a}[a])$};
\node (d3) at (6.5,-2) {\small${}_\scrT(y[d],x_{a+1}[a])$};
\node (d4) at (10,-2) {\small${}_\scrT(y[d],h_{a+1}[a])$};

\draw[->] (a1) -- (a2);
\draw[->] (a2) -- (a3);
\draw[->] (a3) -- (a4);
\draw[->] (b1) -- (b2);
\draw[->] (b2) -- (b3);
\draw[->] (b3) -- (b4);
\draw[->] (c1) -- (c2);
\draw[->] (c2) -- (c3);
\draw[->] (c3) -- (c4);
\draw[->] (d1) -- (d2);
\draw[->] (d2) -- (d3);
\draw[->] (d3) -- (d4);
 \draw [dotted] (-1.5,.5) -- (-1.5,-2.5) -- (1.5,-2.5) -- (1.5,.5) -- cycle;
  \draw [dotted] (8.5,.5) -- (8.5,-2.5) -- (11.5,-2.5) -- (11.5,.5) -- cycle;
\end{tikzpicture}
\]
Since $y=y_d$ it follows from \eqref{induction vanishing} that all Hom sets within the dotted boxes are zero. Hence, since further $x_a\cong h_a$, there are thus isomorphisms
\[
\Hom_\scrT(y[d],x[a])\cong\hdots\cong\Hom_\scrT(y[d],x_{a}[a])\cong\Hom_\scrT(y[d],\H^a(x)).
\]
Now applying $\Hom_\scrT(-,\H^a(x))$ to the rightmost triangle in \eqref{triangles for y} gives an exact sequence
\[
{}_\scrT(y_{d-1}[d+1],\H^a(x))
\to
{}_\scrT(\H^d(y),\H^a(x))
\to
{}_\scrT(y[d],\H^a(x))
\to
{}_\scrT(y_{d-1}[d],\H^a(x))
\]
where the outer two sets are again zero by \eqref{induction vanishing}. It follows that
\[
\Hom_\scrT(y[d],x[a])\cong\Hom_\scrT(y[d],\H^a(x))\cong\Hom_\scrT(\H^d(y),\H^a(x)).\qedhere
\]  
\end{proof}

It is also possible to detect membership of $[a,b]_\scrA$ using Hom vanishings. The following result is the more general version of  \ref{notation via vanishings}.

\begin{lem}\label{bound for objects} 
Let $\scrA$ be the heart of a bounded t-structure, $x \in \scrT$ an object, and let $a \leq b$ be integers.  Then the following are equivalent.
\begin{enumerate}
\item $x \in [a,b]_{\scrA}$.
\item $\Hom(\scrA, x[i]) = 0$ for all $i < a$ and $\Hom(x, \scrA[i]) = 0$ for all $i < -b$.
\end{enumerate}
Furthermore $x \in  [\![a,b]$ iff $\Hom(\scrA, x[a]) \neq 0$, and $x \in [a, b]\!]$ iff $\Hom(x, \scrA[-b]) \neq 0$
\end{lem}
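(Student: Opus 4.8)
The statement to prove is \ref{bound for objects}: for the heart $\scrA$ of a bounded t-structure on $\scrT$, an object $x$, and integers $a \leq b$, membership $x \in [a,b]_\scrA$ is equivalent to $\Hom(\scrA, x[i]) = 0$ for all $i < a$ together with $\Hom(x, \scrA[i]) = 0$ for all $i < -b$; moreover the boundary-sharpness statements. The plan is to work directly from the defining filtration \eqref{t2} of $x$ with respect to $\scrA$, using the vanishing of negative Exts \eqref{t1} as the only real input. I would treat the two halves (the vanishing on the left, controlling the bottom degree $a$, and the vanishing on the right, controlling the top degree $b$) symmetrically, since the second follows from the first applied in the opposite category $\scrT^{\mathrm{op}}$, whose heart is $\scrA^{\mathrm{op}}$ with the roles of $[a,b]$ reversed.

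First I would prove the easy direction (1)$\Rightarrow$(2). If $x \in [a,b]_\scrA$, then in the filtration \eqref{t2} only the cohomology objects $\H^i_\scrA(x) = h_i[i]$ for $a \leq i \leq b$ are nonzero, so $x$ is a finite iterated extension of the shifted hearts $h_i[i] \in \scrA$ with $i \in [a,b]$. Applying $\Hom(-,x[i])$ — sorry, $\Hom(\scrA, x[i])$ — and devissage down the triangles reduces the claim to $\Hom(\scrA, \H^j_\scrA(x)[i - j]) = 0$ whenever $i < a \leq j$, i.e. $i - j < 0$, which is exactly \eqref{t1}. Symmetrically $\Hom(x, \scrA[i]) = 0$ for $i < -b$ reduces to $\Hom(\H^j_\scrA(x)[-i], \scrA)$... again a negative Ext between objects of $\scrA$ (here $j \leq b$ and $-i > b \geq j$). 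This is essentially the computation \eqref{induction vanishing} already carried out inside the proof of \ref{magic lemma Db 2}, so I would cite or lightly adapt that.

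For the converse (2)$\Rightarrow$(1), suppose $x \in [\![ c, d ]\!]_\scrA$ for the true cohomological support $[c,d]$; I must show $c \geq a$ and $d \leq b$. For $c \geq a$: the bottom cohomology gives a nonzero map $\H^c_\scrA(x)[-c] \to x$, i.e. $\Hom(\H^c_\scrA(x), x[c]) \neq 0$ with $\H^c_\scrA(x) \in \scrA$; if $c < a$ this contradicts $\Hom(\scrA, x[c]) = 0$. Here the key structural fact is that the map $x_c = h_c \to x$ from the bottom stage of \eqref{t2} composed appropriately is genuinely nonzero — which holds because, by the same vanishing \eqref{induction vanishing}, $\Hom(\H^c_\scrA(x), x[c]) \cong \Hom(\H^c_\scrA(x), x_c[c]) = \Hom(\H^c_\scrA(x), \H^c_\scrA(x)) \ni \mathrm{id} \neq 0$. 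Dually, the top cohomology gives a nonzero $x \to \H^d_\scrA(x)[-d]$, i.e. $\Hom(x, \scrA[-d]) \neq 0$, so $-d \geq -b$, i.e. $d \leq b$, lest this contradict $\Hom(x, \scrA[i]) = 0$ for $i < -b$. Combining, $[c,d] \subseteq [a,b]$, which is (1). The sharpness addenda fall out of the same two nonvanishing computations: $x \in [\![ a, b]$ iff $c = a$ iff $\Hom(\scrA, x[a]) \neq 0$, and $x \in [a,b]\!]$ iff $d = b$ iff $\Hom(x, \scrA[-b]) \neq 0$.

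\textbf{Main obstacle.} The only delicate point is justifying the isomorphism $\Hom(\H^c_\scrA(x), x[c]) \cong \Hom(\H^c_\scrA(x), x_c[c])$ — equivalently, that the canonical map from the bottom cohomology to $x$ is not killed — which requires the chain of vanishings \eqref{induction vanishing} run through all the triangles of the filtration; this is a devissage that is routine but needs to be set up carefully so that all the Ext groups one discards are genuinely negative. Since this is precisely the argument already executed inside \ref{magic lemma Db 2}, I expect to either invoke it directly or reproduce the two-paragraph telescoping argument, so the total cost is small.
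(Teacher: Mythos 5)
Your proposal is correct and follows essentially the same route as the paper: the implication (1)$\Rightarrow$(2) by d\'evissage through the filtration \eqref{t2} using the negative-Ext vanishing \eqref{t1} (the paper phrases this as an induction on $d-c$), and (2)$\Rightarrow$(1) plus the sharpness statements via the nonzero maps $\H^c_\scrA(x)[-c]\to x$ and $x\to \H^d_\scrA(x)[-d]$, which the paper obtains from the truncation triangle while you obtain them by showing the filtration map $x_c\to x$ induces an injection on $\Hom(\H^c_\scrA(x)[-c],-)$ --- an equivalent argument. One cosmetic remark: the vanishings you need there (Homs from $\scrA$ into negatively shifted cones $h_i$) are not literally \eqref{induction vanishing}, which concerns Homs out of the partial objects $x_t$ into $\scrA$, but they follow from \eqref{t1} by exactly the same telescoping, so this is a matter of citation rather than a gap.
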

\begin{proof}
If $x \in [\![c,d]\!]$, then there is an exact triangle
\[
\uptau_{\leq 0}(x[c])\to x[c] \to \uptau_{>0}(x[c]) \to 
\]
Now $\H^c(x) = (\uptau_{\leq 0} \uptau_{\geq 0})(x[c])$, so since $x[c]=\uptau_{\geq 0}(x[-c])$ by the assumption $x \in [\![c,d]\!]$, the above triangle gives a non-zero morphism $\H^c(x) \to x[c]$.  In a similar way, there exists non-zero map $x\to\H^d_\scrA(x)[-d]$.\\
(2)$\Rightarrow$(1) The above paragraph shows that if $\Hom(\scrA, x[i]) = 0$ for $i < a$, then the lowest nonzero cohomology of $x$ must be $\geq a$.  Similarly, if $\Hom(x, \scrA[i]) = 0$ for $i < -b$ hold, then the highest nonzero cohomology of $x$ must be $\leq b$.

\noindent
(1)$\Rightarrow$(2)
Conversely, let $x \in \scrT$ and assume that $x\in [\![c,d]\!]_{\scrA}$ for some $a \leq c \leq d \leq b$ .
To prove the vanishings, we use induction on $d-c$.
If $d = c$, then $x$ is an object in $\scrA$ up to shift, and the vanishings are trivial.
When $d-c > 0$, consider the triangle
\[ 
\H_{\scrA}^c(x)[-c] \to x \to x' \to \H_{\scrA}^c(x)[-c+1] .
\]
with $x'\in[c+1, d]\!]_{\scrA}$. Applying the inductive hypothesis to $x'$ implies that for all $i < a (\leq c)$
\[ 
\Hom(\scrA, x[i]) \cong \Hom(\scrA, \H_{\scrA}^c(x)[-c+i]) = 0 
\]
which proves one of the desired vanishings.
The other vanishing is similar.

The final statement follows from the first paragraph.
\end{proof}

By a slight abuse of notation, if $\scrA$ and $\scrH$ are the hearts of two bounded t-structures, write $\scrH\in[a,b]_\scrA$ if $x\in[a,b]_\scrA$ for all $x\in\scrH$.

\begin{cor} \label{bound for hearts}
Let $\scrA$ and $\scrH$ be the hearts of two bounded t-structures, and let $a \leq b$ be integers.  Then the following are equivalent.
\begin{enumerate}
\item $\scrH \in [a,b]_{\scrA}$.
\item $\Hom(\scrA, \scrH[i]) = 0$ for all $i < a$ and $\Hom(\scrH, \scrA[i]) = 0$ for all $i < -b$.
\end{enumerate}
In particular, $\scrH \in [a,b]_{\scrA}$ iff $\scrA \in [-b,-a]_{\scrH}$
\end{cor}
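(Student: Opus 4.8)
The plan is to reduce the equivalence (1)$\Leftrightarrow$(2) to the object-wise statement \ref{bound for objects}, and then to read off the final `in particular' clause from a symmetry of condition (2).

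First I would unwind the notation: by the convention fixed just before the statement, $\scrH\in[a,b]_\scrA$ means $x\in[a,b]_\scrA$ for every $x\in\scrH$, and $\Hom(\scrA,\scrH[i])=0$ means $\Hom(A,H[i])=0$ for all $A\in\scrA$ and all $H\in\scrH$ (similarly for $\Hom(\scrH,\scrA[i])=0$). For (1)$\Rightarrow$(2) I would fix an arbitrary $x\in\scrH$; since $x\in[a,b]_\scrA$, \ref{bound for objects} gives $\Hom(\scrA,x[i])=0$ for all $i<a$ and $\Hom(x,\scrA[i])=0$ for all $i<-b$, and letting $x$ range over $\scrH$ produces exactly the two vanishings asserted in (2). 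For (2)$\Rightarrow$(1) I would again fix $x\in\scrH$; the hypotheses of (2) specialise, for this particular $x$, to $\Hom(\scrA,x[i])=0$ for $i<a$ and $\Hom(x,\scrA[i])=0$ for $i<-b$, whence $x\in[a,b]_\scrA$ by \ref{bound for objects}, and since $x\in\scrH$ was arbitrary this says precisely $\scrH\in[a,b]_\scrA$. The only thing to keep straight here is the quantifier bookkeeping: condition (2) is a statement about all objects of the two hearts simultaneously, while \ref{bound for objects} concerns a single object, so one moves between the two simply by quantifying over $x\in\scrH$.

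For the final clause I would observe that condition (2) is literally unchanged under the substitution replacing $(\scrA,\scrH,a,b)$ by $(\scrH,\scrA,-b,-a)$: the pair of conditions ``$\Hom(\scrA,\scrH[i])=0$ for $i<a$'' and ``$\Hom(\scrH,\scrA[i])=0$ for $i<-b$'' is carried to the pair ``$\Hom(\scrH,\scrA[i])=0$ for $i<-b$'' and ``$\Hom(\scrA,\scrH[i])=0$ for $i<a$'', i.e.\ to itself (using $-(-a)=a$ and $-(-b)=b$). Applying the equivalence (1)$\Leftrightarrow$(2), just proved, both to the tuple $(\scrA,\scrH,a,b)$ and to the tuple $(\scrH,\scrA,-b,-a)$ then shows that $\scrH\in[a,b]_\scrA$ and $\scrA\in[-b,-a]_\scrH$ are each equivalent to this one symmetric condition, hence to each other. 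I do not expect a genuine obstacle here; the corollary is a purely formal consequence of \ref{bound for objects}, and the only (minor) point requiring care is checking that the index shifts in the symmetry substitution make the two clauses match up exactly.
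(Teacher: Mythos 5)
Your proof is correct and is essentially the paper's own argument: the paper likewise obtains (1)$\Leftrightarrow$(2) by applying \ref{bound for objects} to every object of $\scrH$, and deduces the final clause from the symmetry of condition (2) under exchanging $(\scrA,\scrH,a,b)$ with $(\scrH,\scrA,-b,-a)$. Your quantifier bookkeeping and the index check $-(-a)=a$, $-(-b)=b$ are exactly the (only) points the paper's one-line proof leaves implicit.
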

\begin{proof}
(1)$\Leftrightarrow$(2) follows by applying \ref{bound for objects} to all objects $x$ of $\scrH$.  The final statement follows from the symmetry of $\scrA$ and $\scrH$ in the condition (2).
\end{proof}

\begin{cor} \label{bound of hearts by smc}
Let $\scrA$ and $\scrH$ be the hearts of two bounded t-structures, and suppose that $\scrA$ is a length heart with finitely many simples, where $\scrS$ is their direct sum. Then there exist integers $a \leq b$ such that $\scrH \in [a,b]_{\scrA}$.
\end{cor}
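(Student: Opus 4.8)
The plan is to deduce this from the Hom-vanishing criteria of \ref{bound for objects}--\ref{bound for hearts}; the key idea is that the length hypothesis on $\scrA$ allows one to test boundedness against the single object $\scrS$ rather than against the whole (a priori infinite) family of objects of $\scrA$. We may assume $\scrT\neq 0$, since otherwise $\scrH=0$ and there is nothing to prove; then $\scrS\neq 0$ as well.

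By the final assertion of \ref{bound for hearts}, finding integers $a\leq b$ with $\scrH\in[a,b]_\scrA$ is equivalent to finding integers $p\leq q$ with $\scrA\in[p,q]_\scrH$, and by \ref{bound for hearts}(2) (with the roles of $\scrA$ and $\scrH$ exchanged) the latter amounts to
\[
\Hom(\scrH,\scrA[i])=0 \text{ for } i<p, \qquad \Hom(\scrA,\scrH[i])=0 \text{ for } i<-q.
\]
Next I would invoke that $\scrA$ is a length category whose simples sum to $\scrS$: every object of $\scrA$ admits a finite filtration with simple subquotients, and each simple is a direct summand of $\scrS$, so running the long exact Hom-sequences attached to the short exact sequences of such a filtration shows, for each fixed $i$, that $\Hom(\scrA,\scrH[i])=0\iff\Hom(\scrS,\scrH[i])=0$ and $\Hom(\scrH,\scrA[i])=0\iff\Hom(\scrH,\scrS[i])=0$. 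Hence the two displayed conditions are equivalent to $\Hom(\scrH,\scrS[i])=0$ for $i<p$ together with $\Hom(\scrS,\scrH[i])=0$ for $i<-q$, which by \ref{bound for objects} (applied with heart $\scrH$ and object $\scrS$) is exactly the statement $\scrS\in[p,q]_\scrH$.

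It then only remains to observe that such $p\leq q$ exist: $\scrS$ is a nonzero object of $\scrT$ and $\scrH$ is the heart of a \emph{bounded} t-structure, so by property \ref{t2} the cohomology $\H^i_\scrH(\scrS)$ is nonzero for only finitely many $i$. Setting $a\colonequals -q$ and $b\colonequals -p$ and reading the chain of equivalences backwards yields $\scrH\in[a,b]_\scrA$. I do not anticipate a real obstacle; the only points requiring care are that the filtration/direct-summand reduction from $\scrA$ to $\scrS$ genuinely uses the length hypothesis, and that the sign bookkeeping when passing between $[a,b]_\scrA$ and $[-b,-a]_\scrH$ via \ref{bound for hearts} is done consistently.
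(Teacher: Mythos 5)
Your argument is correct and is essentially the paper's proof: both bound $\scrS$ with respect to $\scrH$ using boundedness of the t-structure, propagate that bound from $\scrS$ to all of $\scrA$ using the length hypothesis (the paper phrases this as ``$\scrA$ is the extension closure of $\scrS$'', you phrase it via filtrations and the Hom-vanishing criteria of \ref{bound for objects}--\ref{bound for hearts}), and then flip $\scrA\in[p,q]_\scrH$ into $\scrH\in[-q,-p]_\scrA$ via the final statement of \ref{bound for hearts}. The only difference is cosmetic: the paper states the middle step directly at the level of cohomology bounds, whereas you run it through the equivalent Hom-vanishing formulation.
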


\begin{proof}
The boundedness of the t-structure giving $\scrH$ implies that the cohomology of the single object $\scrS$ with respect to $\scrH$ is bounded, 
and hence $\scrS \in [c,d]_{\scrH}$ for some $c \leq d$.
Since $\scrA$ is the extension closure of $\scrS$ in $\scrT$, the above bound on $\scrS$ implies that $\scrA \in [c,d]_{\scrH}$. By \ref{bound for hearts}, this is equivalent to $\scrH \in [-d,-c]_{\scrA}$.
\end{proof}

\subsection{Main Results}
The above results allow us to easily extend \ref{make module}.  The proof of the following reverts to the conventions of \S\ref{sec: atoms and simples}, namely that  unadorned $[a,b]$ are by default with respect to cohomology of the standard t-structure on $\scrC_\chamC$.

\begin{thm}\label{cor: no exotic}
If $\scrH$ is the heart of a bounded t-structure on $\scrC_\chamC$, then there exists $\upbeta\colon \chamC\to\chamD$ such that $\Phi_\upbeta(\scrH)$ is the standard heart on $\scrC_\chamD$.    In particular, $\scrH$ is a finite length category, with finitely many simples.
\end{thm}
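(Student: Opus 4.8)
The plan is to replay, at the level of hearts, the maximality argument used to prove \ref{make module}. The point is that the hypothesis in \ref{make module} that $x$ has no negative self-extensions is now automatic: any two objects $u,v$ of the heart $\scrH$ satisfy $\Hom_\scrC(u,v[j])=0$ for $j<0$, and this is exactly the input required to run \ref{magic lemma Db 2} between members of $\scrH$.

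Throughout, write $\scrA_\chamE$ for the standard heart on $\scrC_\chamE$. Since $\scrA_\chamC=\fmod\mathrm{A}_{\chamC,\con}$ is a length heart with finitely many simples, \ref{bound of hearts by smc} gives integers $a\le b$ with $\scrH\in[a,b]_{\scrA_\chamC}$; shrinking the interval, assume $\scrH\in[\![a,b]\!]_{\scrA_\chamC}$. In the base case $a=b$ we have $\scrH\in[a,a]_{\scrA_\chamC}$, which by the symmetry clause of \ref{bound for hearts} also reads $\scrA_\chamC\in[-a,-a]_\scrH$; the two resulting inclusions $\scrH\subseteq\scrA_\chamC[-a]\subseteq\scrH$ force $\scrH=\scrA_\chamC[-a]$, and iterating the inverse of the longest-element functor $|a|$ times (or the functor itself, if $a<0$) and using \ref{actions on some modules}\eqref{actions on some modules 3} transports $\scrH$ onto a genuine standard heart $\scrA_\chamD$, with $\chamD$ equal to $\chamC$ or $\mathsf{op}(\chamC)$. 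So assume $a<b$, and induct on $b-a$.

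For the inductive step I would mimic the set-up of \ref{make module}: let $\scrP$ be the finite poset (ordered as in \ref{dfn: atoms order}) of atoms $\upalpha$ out of $\chamC$ such that $\Phi_\upalpha(\scrH)\in[a,b]$ relative to the standard heart of the target chamber of $\upalpha$. Then $\Id\in\scrP$, while by \ref{lem:atoms}\eqref{lem:atoms 4} applied object-wise the longest atom $\ell$ is not in $\scrP$, since $\Phi_\ell(\scrH)\in[\![a+1,b+1]\!]$. Choose $\upbeta\in\scrP$ maximal, set $\chamD\colonequals t(\upbeta)$, $\scrH'\colonequals\Phi_\upbeta(\scrH)$ and $\scrA'\colonequals\scrA_\chamD$. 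If $\H^a_{\scrA'}(\scrH')=0$ or $\H^b_{\scrA'}(\scrH')=0$, then $\scrH'$ lies in an interval of length $<b-a$ and we recurse on $(\scrH',\chamD)$, so the whole task is to exclude $\scrH'\in[\![a,b]\!]_{\scrA'}$. Assuming it, pick $x,y\in\scrH'$ with $\H^a_{\scrA'}(x)\neq 0$ and $\H^b_{\scrA'}(y)\neq 0$; since $x,y$ lie in the common heart $\scrH'$ and $a-b<0$ we have $\Hom_\scrC(y,x[a-b])=0$, hence $\Hom_\scrC(\H^b_{\scrA'}(y),\H^a_{\scrA'}(x))=0$ by \ref{magic lemma Db 2}. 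Fixing a simple subobject $\scrS_i$ of $\H^a_{\scrA'}(x)$ (one of the finitely many simples of $\scrA'$), the same computation with $x$ against an arbitrary $x'\in\scrH'$ gives $\Hom_\scrC(\H^b_{\scrA'}(x'),\scrS_i)=0$ for every $x'\in\scrH'$, whence \ref{lem:atoms}\eqref{lem:atoms 2} applied object-wise forces $\Phi_i(\scrH')\in[a,b]$, that is $\Phi_{s_i\circ\upbeta}(\scrH)\in[a,b]$. Finally, $s_i\circ\upbeta$ is genuinely an atom: this is clear when $\upbeta=\Id$, and when $\upbeta\neq\Id$ one has $\Phi_\upbeta(\scrS_\chamC)\in[\![0,1]\!]_{\scrA'}$ exactly as in \ref{make module}, so that $\Hom_\scrC(\scrS_\chamC,z[a-1])=0$ for the preimage $z\in\scrH$ of $x$ (by \ref{notation via vanishings}, since $\scrH\in[\![a,b]\!]_{\scrA_\chamC}$) feeds \ref{magic lemma Db 2} to yield $\Hom_\scrC(\H^1_{\scrA'}(\Phi_\upbeta\scrS_\chamC),\scrS_i)=0$, and then \ref{lem:atoms}\eqref{lem:atoms 2} and \eqref{lem:atoms 3} show $s_i\circ\upbeta$ is an atom. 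Either way this contradicts maximality of $\upbeta$, so $\scrH'\in[\![a,b]\!]_{\scrA'}$ cannot happen; the induction closes and produces $\upbeta\colon\chamC\to\chamD$ with $\Phi_\upbeta(\scrH)=\scrA_\chamD$. Since $\Phi_\upbeta$ is an equivalence and $\scrA_\chamD=\fmod\mathrm{A}_{\chamD,\con}$ is of finite length with finitely many simples, so is $\scrH$.

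The main obstacle is precisely this upgrade from an object to a heart. In \ref{make module} a single $x$ carries both its highest and lowest cohomology, and the crucial vanishing $\Hom(\H^bx,\H^ax)=0$ is internal to that one object; for a heart the top and bottom cohomological degrees may be attained by different objects, so one must run \ref{magic lemma Db 2} simultaneously among several well-chosen members of $\scrH'$, and — the step most easily overlooked — one must verify that $\Phi_i$ keeps \emph{every} object of $\scrH'$ inside $[a,b]$, not merely one. Once one is comfortable that \ref{magic lemma Db 2} and \ref{lem:atoms}\eqref{lem:atoms 2} may be applied object-by-object to a heart, and keeps careful track of which standard heart $\scrA_\chamE$ is being used to measure cohomology after each mutation, the argument is a faithful transcription of the proof of \ref{make module}.
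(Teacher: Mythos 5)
Your proposal is correct and follows essentially the same route as the paper's proof: bound $\scrH$ via \ref{bound of hearts by smc}, take a maximal atom in the finite poset of atoms keeping $\scrH$ inside $[a,b]$, use \ref{magic lemma Db 2} together with \ref{lem:atoms}\eqref{lem:atoms 2}--\eqref{lem:atoms 3} object-by-object to contradict maximality, and finish the $a=b$ case with $\Phi_\ell^{\pm1}$. The only deviations (shrinking the interval from either end rather than always killing $\H^a$, and deriving the nested-hearts equality from \ref{bound for hearts}) are cosmetic.
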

\begin{proof}
Since the standard heart on $\scrC_\chamC$ is finite length with finitely many simples, by \ref{bound of hearts by smc}, there exists integers $a \leq b$ such that $\scrH \in [\![a,b]\!]$.
If $a = b$, then $\scrH$ is a contained in a shifted standard heart, and thus it equals that shifted standard heart since if the heart of a bounded t-structure includes inside another, they must be equal.  The result then follows from \ref{lem:atoms}\eqref{lem:atoms 4}, since applying either $\Phi_\ell$ or $\Phi_\ell^{-1}$ repeatedly takes this to the standard heart in degree zero.

Now assume $a < b$ and consider the set
\[
\Updelta\colonequals \{ \upalpha \mid \upalpha\mbox{ is an atom, and } \Phi_\upalpha(\scrH)\in[a,b]\,\}.
\]
Since $\Updelta$ contains the trivial atom, this is a non-empty partially ordered finite set, and hence there exists a maximal element $\upbeta \in \Updelta$.
For this $\upbeta$ we claim that $\Phi_{\upbeta}(\scrH) \in [a+1, b]$. We assume the contrary, namely there exists an object $x \in \scrH$ such that $\Phi_{\upbeta}(x) \in [\![a,b]$, and will derive a contradiction.  Since $\H^a(\Phi_{\upbeta}x)\neq 0$, fix a simple $\scrS_i \hookrightarrow \H^a(\Phi_{\upbeta}x)$.

Let $y \in \scrH$ be any object, then we claim that $\Phi_i\Phi_{\upbeta}(y) \in [a,b]$. If $\Phi_{\upbeta}(y) \in [a, b-1]$, then this follows from \ref{lem:atoms}\eqref{lem:atoms 1}, hence assume that $\Phi_{\upbeta}(y) \in [a, b]\!]$, so $\H^b(\Phi_{\upbeta}y) \neq 0$.
Since $x$ and $y$ are objects of the heart $\scrH$, the vanishing $\Hom(\Phi_{\upbeta}y, \Phi_{\upbeta}x[j]) = 0$ holds for all $j < 0$.
Thus it follows from \ref{magic lemma} (or \ref{magic lemma Db 2}) that
\begin{equation}
0=
\Hom(\H^b(\Phi_\upbeta y), \H^a(\Phi_\upbeta x)).
\label{eqn: top bottom 2 heart}
\end{equation}
The injection $\scrS_i\hookrightarrow \H^a(\Phi_\upbeta(x))$ together with \eqref{eqn: top bottom 2 heart} implies that
\[
\Hom(\H^b(\Phi_\upbeta y ),\scrS_i)=0.
\]
Applying \ref{lem:atoms}\eqref{lem:atoms 3} to $\Phi_{\upbeta}(y)$ gives $\Phi_i\Phi_{\upbeta}(y) \in [a,b]$.  Since this holds for all $y\in\scrH$, it follows that $\Phi_i\Phi_{\upbeta}(\scrH) \in [a,b]$.

Since $\Phi_{\upbeta}(x) \in [\![a, b]$, exactly the same argument as in the proof of \ref{make module} shows that $s_i \circ \upbeta$ is an atom, and thus $s_i \circ \upbeta$ remains an element of $\Updelta$.  This is the desired contradiction, as $\upbeta$ is maximal in $\Updelta$, and thus $\Phi_{\upbeta}(\scrH) \in [a+1, b]$ holds, as claimed.

Repeating the above argument, there exists $\upbeta\colon\chamC\to\chamD$ such that $\Phi_{\upalpha}(\scrH)$ is contained in a shifted standard heart. As in the first paragraph, applying either $\Phi_\ell$ or $\Phi_\ell^{-1}$ repeatedly takes this to degree zero.
\end{proof}

\begin{rem}
It is also possible to prove \ref{cor: no exotic} by adapting the proof of \cite[Lemma 9]{David}.  The main benefit of the cohomology argument here is that it is uniform: the same technique that works for a single brick also works for the heart of a bounded t-structure.

The key fact required to adapt the proof of \cite[Lemma 9]{David} to the geometric setting $\scrC$ is that \emph{all} standard hearts from the groupoid (and thus all the hearts that appear in the \cite{David} induction) admit only finitely many torsion theories.   In the $3$-fold setting all contraction algebras are silting discrete, so the fact there are only finitely many torsion theories is the last statement in \cite[3.8]{DIJ}.  In the surfaces setting(s), by Asai \cite[1.29]{Asai} bricks modules on $e_\scrI\Uppi e_\scrI$ are the same as on $\Lambda_{\con}$, and so in particular there only finitely many.  Thus by \cite[1.4]{DIJ} each $e_\scrI\Uppi e_\scrI$ is $\uptau$-tilting finite, and so by \cite[3.8]{DIJ} each $\fmod e_\scrI\Uppi e_\scrI$ has only finitely many torsion theories.  See also \cite{AMY}.
\end{rem}

\section{Geometric Corollaries}\label{sec4: geo cors}

This section translates the algebraic results of \S\ref{sec: classification} and \S\ref{sec: t-st classification} into geometric language, then uses these results to create new geometric corollaries.

\subsection{Surfaces}
For the minimal resolution $\scrZ\to\mathbb{C}^2/\Gamma$,  the braid group $\Br$ acts on $\scrC$  defined in \eqref{eqn: define C intro}, generated by spherical twists in the objects $\scrO_{\Curve_1}(-1),\hdots,\scrO_{\Curve_n}(-1)$.

\begin{cor}\label{thm: minimal main}
Consider $\scrZ\to\mathbb{C}^2/\Gamma$, and let $x\in\scrC$.  Then the following hold. 
\begin{enumerate}
\item\label{thm: minimal main 1} If $\Hom_\scrC(x,x[j])=0$ for all $j<0$, then there exists $T\in\Br$ such that $T(x)$ is a concentrated in homological degree zero.
\item\label{thm: minimal main 2} Every spherical object in $\scrC$ belongs to the orbit, under the action of the braid group, of the objects $\scrO_{\Curve_1}(-1),\hdots,\scrO_{\Curve_n}(-1)$.
\end{enumerate}
Furthermore, the heart of every bounded t-structure on $\scrC$ is the image, under the action of the group $\Br$, of the module category of the preprojective algebra of (finite) ADE type.
\end{cor}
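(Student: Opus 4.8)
The plan is to reduce all three assertions to the groupoid statements already established, using two features special to the minimal resolution $\scrZ\to\mathbb{C}^2/\Gamma$. First, by \S\ref{sec: bij MMP} the category $\scrC_\chamC$ attached to a chamber does not depend on $\chamC$: it is literally the fixed subcategory $\scrC=\{a\in\Db(\fmod\Uppi)\mid e_0\H^*(a)=0\}$ of \eqref{eqn: define C intro}, and likewise the standard heart in \emph{every} chamber is the \emph{same} subcategory $\fmod(\Uppi/(e_0))$, which by \S\ref{subsec: cont alg} is precisely the module category of the preprojective algebra of finite ADE type. Second, by the calibration in \ref{rem: calibration} each generating mutation functor $\Phi_i$ is functorially isomorphic to the Seidel--Thomas twist $t_i$ in $\scrO_{\Curve_i}(-1)$; hence, under the identification of all the categories $\scrC_\chamC$ with the single category $\scrC$ (via the Weyl group, as in the Remark following \ref{ex: 6 cham 2}), any composition $\Phi_\upbeta$ of mutation functors becomes an honest element of $\Br\le\Auteq\scrC$. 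With this dictionary, part (1) is immediate from \ref{make module}, and part (2) from \ref{cor 2 3folds}\eqref{cor 2 3folds 2} together with the fact, recalled in \S\ref{subsec: simples}, that $\scrS_i$ corresponds to $\scrO_{\Curve_i}(-1)$.

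For the final statement, let $\scrH$ be the heart of an arbitrary bounded t-structure on $\scrC$. Theorem~\ref{cor: no exotic} supplies a path $\upbeta\colon\chamC\to\chamD$ for which $\Phi_\upbeta(\scrH)$ is the standard heart on $\scrC_\chamD$. By the first paragraph this standard heart is the fixed subcategory $\fmod(\Uppi/(e_0))\subset\scrC$ and $\Phi_\upbeta\in\Br$, so $\scrH=\Phi_\upbeta^{-1}\bigl(\fmod(\Uppi/(e_0))\bigr)$; that is, $\scrH$ is the image under the element $\Phi_\upbeta^{-1}\in\Br$ of the module category of the finite ADE preprojective algebra, as claimed. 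The remaining ``in particular'' consequences (finite length, finitely many simples) are already part of the statement of \ref{cor: no exotic}.

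The entire homological content lives in \ref{make module}, \ref{cor 2 3folds} and \ref{cor: no exotic}; at this stage the only thing to verify is the bookkeeping behind ``$\Phi_\upbeta\in\Br$''. A priori $\Phi_\upbeta$ is merely a functor $\scrC_\chamC\to\scrC_\chamD$ between nominally distinct categories, so one must first pin down the canonical identification of all the $\scrC_\chamC$ with a fixed $\scrC$ and check that under it the mutation generators are carried to the twists $t_i$. This compatibility is precisely the homomorphism $\uppi_1\big((\mathbb{C}^n\backslash\scrH_\mathbb{C})/W\big)\to\Auteq\scrC$ recorded in the Remark following \ref{ex: 6 cham 2}, so no genuinely new work is needed; I expect this identification step, rather than any homological input, to be the only delicate point, and it is already in place.
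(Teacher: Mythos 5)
Your proposal is correct and follows essentially the same route as the paper: identify the geometric $\scrC$ with the chamber categories $\scrC_\chamC$ (all equal in the minimal resolution case) via the tilting equivalence $\Psi_\scrZ$, use that each $\Phi_i$ corresponds to the twist $t_i$ so compositions $\Phi_\upbeta$ become elements of $\Br$, and then quote \ref{make module}, \ref{cor 2 3folds}\eqref{cor 2 3folds 2} (with $\scrS_i\leftrightarrow\scrO_{\Curve_i}(-1)$) and \ref{cor: no exotic}. The paper's proof records exactly this bookkeeping via the commutative diagram intertwining $t_i$ and $\Phi_i$ through $\Psi_\scrZ$, which is the identification step you flag as the only delicate point.
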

\begin{proof}
Under the derived equivalence $\Psi_\scrZ$ in \eqref{NCCR Db}, the geometric category $\scrC$ defined in \eqref{eqn: define C intro} corresponds to the $\scrC_\chamC$ in \S\ref{sec: bij MMP}.  Furthermore, the mutation functors are functorially isomorphic to spherical twist, via the following commutative diagram.
\[
\begin{tikzpicture}
\node (A) at (0,0) {$\Db(\coh\scrZ)$};
\node (B) at (3,0) {$\Db(\fmod\Uppi)$};
\node (a) at (0,-1.5) {$\Db(\coh\scrZ)$};
\node (b) at (3,-1.5) {$\Db(\fmod\Uppi)$};
\draw[->] (A)--node[above]{$\scriptstyle \Psi_\scrZ$}(B);
\draw[->] (a)--node[above]{$\scriptstyle \Psi_\scrZ$}(b);
\draw[->] (A)--node[left]{$\scriptstyle t_i$}(a);
\draw[->] (B)--node[right]{$\scriptstyle \Phi_i$}(b);
\end{tikzpicture}
\]
In this setting all the categories $\scrC_\chamC$, as $\chamC$ varies, are equal.  Further, all functors in the above diagram restrict to the relevant $\scrC$, respectively $\scrC_\chamC$.   Part (1) then follows from \ref{make module}, part (2) by \ref{cor 2 3folds}\eqref{cor 2 3folds 2} since $\scrS_i$ corresponds to $\scrO_{\Curve_i}(-1)$, and the final statement is \ref{cor: no exotic}.
\end{proof}

The following result is slightly more difficult to state, firstly due to the non-uniqueness of partial resolutions, and secondly since unlike the case of $\scrZ$ or $\scrX$, the functor $\Uppsi_{\scrY'}^{-1}\Phi_\upbeta\Uppsi_\scrY$ below does not yet have a good geometric interpretation.   We further remark that in general, the following is the best possible, but afterwards in \ref{ex:D4 B2} and \ref{ex: pure surfaces} we illustrate how in some situations the result can be improved.  
\begin{cor}\label{cor: cpartial groupoid main}
Consider a partial crepant resolution $\scrY\to\Spec\mathbb{C}^2/\Gamma$, and let $x\in\scrC$. 
\begin{enumerate}
\item\label{cor: cpartial groupoid main 1} The following statements are equivalent.
\begin{enumerate}
\item $\Hom_\scrC(x,x[j])=0$ for all $j<0$.
\item There exists $\upbeta$, and a partial resolution $\scrY'\to\mathbb{C}^2/\Gamma$ obtained from $\scrY$ by iterated wall crossing, such that $\Uppsi_{\scrY'}^{-1}\Phi_\upbeta\Uppsi_\scrY(x)$ belongs to perverse sheaves.
\end{enumerate}
\item\label{cor: cpartial groupoid main 2}  Furthermore, the following are equivalent.
\begin{enumerate}
\item $\Hom_\scrC(x,x[j])=0$ for all $j<0$ and $\Hom_\scrC(x,x)\cong\mathbb{C}$.
\item There exists $\upbeta$, a partial resolution $\scrY'\to\mathbb{C}^2/\Gamma$ obtained from $\scrY$ by iterated wall crossing, and some $\Curve_i\subset\scrY'$ such that $\Uppsi_{\scrY'}^{-1}\Phi_\upbeta\Uppsi_\scrY(x)\cong \scrO_{\Curve_i}(-1)$.
\end{enumerate}
\end{enumerate}
\end{cor}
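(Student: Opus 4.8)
The plan is to run the argument of \ref{thm: minimal main} almost verbatim, transporting \ref{make module} and \ref{cor 2 3folds}\eqref{cor 2 3folds 2} through the derived equivalences of \eqref{NCCR Db}. The only feature absent in the $\scrZ$ case is that the categories $\scrC_\chamD$ attached to different chambers now sit inside the derived categories of genuinely different surfaces, so before translating one must assemble the geometric dictionary. Concretely, I would invoke \cite{IW9} to attach to each chamber $\chamD$ of the intersection arrangement $\scrH$ of $\scrY$ a partial crepant resolution $\scrY_\chamD\to\mathbb{C}^2/\Gamma$ obtained from $\scrY$ by iterated wall crossing, with $\scrY_\chamC=\scrY$ for the base chamber $\chamC$; the equivalence \eqref{NCCR Db} for $\scrY_\chamD$, written $\Uppsi_{\scrY_\chamD}$, restricts to an equivalence from the geometric $\scrC\subset\Db(\coh\scrY_\chamD)$ onto the algebraic $\scrC_\chamD$ of \S\ref{sec: bij MMP}, sends the standard heart $\fmod\mathrm{A}_{\chamD,\con}$ to perverse sheaves on $\scrY_\chamD$, and, by \cite[3.5.8]{VdB}, sends the simple $\scrS_{i,\chamD}$ to $\scrO_{\Curve_i}(-1)$ for the curve $\Curve_i\subset\scrY_\chamD$. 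When $\scrY'=\scrY_\chamD$ I write $\Uppsi_{\scrY'}$ for $\Uppsi_{\scrY_\chamD}$.

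Granting the dictionary, part (1) is immediate. For (a)$\Rightarrow$(b), the image $x'\colonequals\Uppsi_\scrY(x)$ lies in $\scrC_\chamC$ and, the $\Hom$ conditions being preserved by an equivalence, satisfies $\Hom(x',x'[j])=0$ for all $j<0$; hence \ref{make module} produces $\upbeta\colon\chamC\to\chamD$ with $\Phi_\upbeta(x')\in\fmod\mathrm{A}_{\chamD,\con}$, and applying $\Uppsi_{\scrY_\chamD}^{-1}$ places $\Uppsi_{\scrY_\chamD}^{-1}\Phi_\upbeta\Uppsi_\scrY(x)$ in perverse sheaves on $\scrY_\chamD$. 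For (b)$\Rightarrow$(a), perverse sheaves form the heart of a bounded t-structure, so any object of it has no negative self-extensions, and since $\Uppsi_{\scrY'}^{-1}\Phi_\upbeta\Uppsi_\scrY$ is an equivalence this descends to $x$.

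Part (2) goes the same way, with \ref{cor 2 3folds}\eqref{cor 2 3folds 2} replacing \ref{make module}. For (a)$\Rightarrow$(b), the extra hypothesis $\Hom_\scrC(x,x)\cong\mathbb{C}$ transports to $\Hom(x',x')\cong\mathbb{C}$, so \ref{cor 2 3folds}\eqref{cor 2 3folds 2} yields $\upgamma\colon\chamC\to\mathsf{E}$ and an index $i$ with $\Phi_\upgamma(x')\cong\scrS_{i,\mathsf{E}}$; by the dictionary this is $\scrO_{\Curve_i}(-1)$ for $\Curve_i\subset\scrY_\mathsf{E}$, whence $\Uppsi_{\scrY_\mathsf{E}}^{-1}\Phi_\upgamma\Uppsi_\scrY(x)\cong\scrO_{\Curve_i}(-1)$. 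For (b)$\Rightarrow$(a), $\scrO_{\Curve_i}(-1)\cong\scrS_{i,\chamD}$ is a simple object of the finite length category $\fmod\mathrm{A}_{\chamD,\con}$, hence a brick: it has no negative self-extensions and $\End\cong\mathbb{C}$. Both properties transport back along the equivalence $\Uppsi_{\scrY'}^{-1}\Phi_\upbeta\Uppsi_\scrY$, giving $\Hom_\scrC(x,x[j])=0$ for $j<0$ together with $\Hom_\scrC(x,x)\cong\mathbb{C}$.

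The only real content is the dictionary in the first paragraph: identifying wall crossing on $\scrH$ with partial resolution flops of $\scrY$, verifying that each $\Uppsi_{\scrY'}$ intertwines the algebraic standard heart with perverse sheaves on $\scrY'$, and keeping the curve labels consistent so that $\scrS_i$ on the target chamber is precisely $\scrO_{\Curve_i}(-1)$ on $\scrY'$. This is a matter of collecting the relevant statements from \cite{IW9} and \cite{VdB}; with it in place, everything else is formal, since objects of a heart have no negative self-extensions and simples of a finite length heart are bricks. The remark preceding the corollary, that $\Uppsi_{\scrY'}^{-1}\Phi_\upbeta\Uppsi_\scrY$ has no intrinsic geometric description, causes no difficulty here: it is only ever used as an abstract equivalence.
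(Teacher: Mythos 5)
Your proposal is correct and follows essentially the same route as the paper: part (1) is deduced from \ref{make module} and part (2) from \ref{cor 2 3folds}\eqref{cor 2 3folds 2}, using the identification (via \cite[3.5.8]{VdB} and the chamber-to-partial-resolution dictionary of \cite{IW9}) of the simples $\scrS_{i}$ with the sheaves $\scrO_{\Curve_i}(-1)$ and of the standard hearts with perverse sheaves. The extra detail you supply on the dictionary and on the trivial converse directions (objects of a heart have no negative self-extensions; simples are bricks) is exactly what the paper leaves implicit.
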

\begin{proof}
(1) is an immediate consequence of \ref{make module}, and (2) follows from \ref{cor 2 3folds}\eqref{cor 2 3folds 2} since $\scrS_i$ corresponds to $\scrO_{\Curve_i}(-1)$. 
\end{proof}

\begin{exa}\label{ex:D4 B2}
Consider the partial resolution $\scrY\to\mathbb{C}^2/\mathrm{D}_4$ given by the Dynkin data $\,\Dfour{P}{P}{B}{B}\,$, where $\scrI$ is the set of black nodes.  That is, $\scrY$ is obtained from the $\mathrm{D}_4$ minimal resolution by contracting the two curves corresponding to the black nodes.  The following illustrates the associated intersection arrangement $\scrH$.
\[
\begin{array}{cccc}
\begin{array}{c}
\begin{tikzpicture}[scale=0.5]
\draw[->,densely dotted] (180:2cm)--(0:2cm);
\node at (0:2.5) {$\scriptstyle y$};
\draw[->,densely dotted] (-90:2cm)--(90:2cm);
\node at (90:2.5) {$\scriptstyle x$};
\end{tikzpicture}
\end{array}
&
\begin{array}{c}
\begin{tikzpicture}[scale=1]
\draw[line width=\mythick mm,Pink] (180:2cm)--(0:2cm);
\draw[line width=\mythick mm,Green] (135:2cm)--(-45:2cm);
\draw[line width=\mythick mm, Blue] (116.57:2cm)--(-63.43:2cm);
\draw[line width=\mythick mm,Pink] (90:2cm)--(-90:2cm);
\end{tikzpicture}
\end{array}&
\begin{array}{c}
\begin{tabular}{ccc}
\toprule
Restricted Root&\\
\midrule
$10$&$\tikz\draw[line width=\mythick mm, Pink] (0,0) -- (0.25,0);$\\
$11$&$\tikz\draw[line width=\mythick mm, Green] (0,0) -- (0.25,0);$\\
$12$&$\tikz\draw[line width=\mythick mm, Blue] (0,0) -- (0.25,0);$\\
$01$&$\tikz\draw[line width=\mythick mm, Pink] (0,-0.15) -- (0,0.15);$\\
\bottomrule
\end{tabular}
\end{array}
\end{array}
\]
In this example, the wall crossing rule of \cite[1.16]{IW9} returns the same indexing set $\,\Dfour{P}{P}{B}{B}\,$ in each chamber and thus in \S\ref{sec: bij MMP} the \emph{identical} category $\scrC_\chamC$ gets assigned to each chamber.  Further, each wall crossing is a twist autoequivalence \cite[10.5]{IW9} over noncommutative deformations of the corresponding curve.  Thus, as in the case of the minimal resolution, the categories can be identified and thus there is a braid group action of $\mathrm{B}_2$ on $\Db(\coh\scrY)$.  The result \ref{cor: cpartial groupoid main}\eqref{cor: cpartial groupoid main 1} then translates (since always $\scrY'=\scrY$) to give the analogue of \ref{thm: minimal main}\eqref{thm: minimal main 1} for the braid group of $\mathrm{B}_2$, and \ref{cor: cpartial groupoid main}\eqref{cor: cpartial groupoid main 2} gives the analogue of \ref{thm: minimal main}\eqref{thm: minimal main 2}.
\end{exa}

\begin{exa}\label{ex: pure surfaces}
Continuing \ref{ex: Dynkin res}, for the Dynkin data $\,\Dfive{B}{P}{B}{P}{B}\,$ the intersection arrangement for $\scrY\to\mathbb{C}^2/\mathrm{D}_5$ is given in \ref{ex: Dynkin res 2}.  Now the wall crossing rule of \cite[1.16]{IW9} returns different indexing sets to the chambers, as follows.
\[
\begin{array}{c}
\begin{tikzpicture}[scale=1]
\draw[line width=\mythick mm,Pink] (180:2cm)--(0:2cm);
\draw[line width=\mythick mm,Green] (135:2cm)--(-45:2cm);
\draw[line width=\mythick mm, Blue] (116.57:2cm)--(-63.43:2cm);
\draw[line width=\mythick mm,Pink] (90:2cm)--(-90:2cm);
\node[rotate=-90] at (-77:1.4cm) {$\Dfive{B}{P}{B}{P}{B}$};
\node[rotate=-60] at (-54:1.55cm) {$\Dfive{B}{P}{B}{B}{P}$};
\node at (-20:1.4cm) {$\Dfive{B}{P}{B}{B}{P}$};
\node at (45:1.25cm) {$\Dfive{B}{P}{B}{P}{B}$};
\node[rotate=-70] at (102:1.5cm) {$\Dfive{B}{P}{B}{P}{B}$};
\node[rotate=-45] at (125:1.5cm) {$\Dfive{B}{P}{B}{B}{P}$};
\node[rotate=0] at (155:1.4cm) {$\Dfive{B}{P}{B}{B}{P}$};
\node at (225:1.25cm) {$\Dfive{B}{P}{B}{P}{B}$};

\end{tikzpicture}
\end{array}
\]
Thus as the chambers vary the categories $\scrC_\chamC$ are sometimes equal, and sometimes are not.  This translates into a mixed braid group action on $\Db(\coh\scrY)$.
\end{exa}

\subsection{\texorpdfstring{$3$}{3}-folds}
In the $3$-fold setting of flopping contractions $\scrX\to\Spec\scrR$, our main interest is in establishing \ref{cor: flop main PBr} below.  However, there seems to be no method of proving that without taking square roots, and first establishing the following result.

\begin{cor}\label{cor: flop main}
Let $\scrX\to\Spec\scrR$ be a $3$-fold flopping contraction, where $\scrX$ has at worst Gorenstein terminal singularities, and consider $x\in\scrC$.
\begin{enumerate}
\item\label{cor: flop main 1} The following statements are equivalent.
\begin{enumerate}
\item $\Hom_\scrC(x,x[j])=0$ for all $j<0$.
\item There exists a sequence of flop functors such that $\Flop_\upbeta(x)$ belongs to perverse sheaves, on a possibly different $\scrX^+\to\Spec\scrR$.
\end{enumerate}
\item\label{cor: flop main 2}  Furthermore, the following are equivalent.
\begin{enumerate}
\item $\Hom_\scrC(x,x[j])=0$ for all $j<0$ and $\Hom_\scrC(x,x)\cong\mathbb{C}$.
\item There exists $\upbeta$ such that $\Flop_\upbeta(x)\cong\scrO_{\Curve_i}(-1)$ for some curve $\Curve_i$ on a possibly different $\scrX^+\to\Spec\scrR$.
\end{enumerate}
\end{enumerate}
Furthermore, the heart of every bounded t-structure on $\scrC$ is the image, under the action of the group generated by the flop functors, of the module category of the contraction algebra of some $\scrX^+\to\Spec\scrR$ obtained from $\scrX\to\Spec\scrR$ by iterated flop.
\end{cor}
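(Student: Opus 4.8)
The plan is to deduce all three assertions at once by transporting the problem across the tilting equivalence $\Uppsi_\scrX$ of \eqref{NCCR Db}, invoking the results of \S\ref{sec: classification} and \S\ref{sec: t-st classification} verbatim, and translating back through the dictionary set up in \S\ref{sec: bij MMP}, \S\ref{subsec: cont alg}, \S\ref{subsec: simples} and Remark \ref{rem: calibration}. Explicitly, the dictionary reads: under $\Uppsi_\scrX$ the geometric category $\scrC$ of \eqref{eqn: define C intro} becomes $\scrC_\chamC$; each chamber $\chamD$ of $\scrH$ corresponds to a minimal model $\scrX^+_\chamD\to\Spec\scrR$ reached from $\scrX$ by a sequence of flops, with $\scrC_\chamD\cong\Uppsi_{\scrX^+_\chamD}\!\bigl(\scrC(\scrX^+_\chamD)\bigr)$; under $\Uppsi_{\scrX^+_\chamD}$ the perverse heart on $\scrX^+_\chamD$ corresponds to the standard heart $\fmod\mathrm{A}_{\chamD,\con}=\fmod\Lambda_{\chamD,\con}$, which is the module category of the contraction algebra of $\scrX^+_\chamD$ (\S\ref{subsec: cont alg}(3)); and $\scrO_{\Curve_i}(-1)$ on $\scrX^+_\chamD$ corresponds to the simple $\scrS_{i,\chamD}$ by \cite{VdB} and \S\ref{subsec: simples}. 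Finally, by Remark \ref{rem: calibration} each $\Flop_i$ is inverse to the mutation functor $\Phi_i$, and since the Deligne groupoid is closed under formal inverses, a composition of flop functors carrying an object to another exists precisely when there is a groupoid path $\upgamma$ with $\Phi_\upgamma$ doing the same. All the functors involved are equivalences, so the conditions $\Hom_\scrC(x,x[j])=0$ for $j<0$ and $\Hom_\scrC(x,x)\cong\bC$ are preserved throughout.

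Granting this dictionary, part \eqref{cor: flop main 1} is immediate. For $(b)\Rightarrow(a)$: if $\Flop_\upbeta(x)$ lies in the perverse heart on some $\scrX^+$, then the corresponding $\Phi_\upgamma(x)$ lies in the heart $\fmod\mathrm{A}_{\chamD,\con}$, hence has no negative self-extensions by t-structure axiom \eqref{t1}, and pulling back along the equivalence $\Phi_\upgamma$ gives $\Hom_\scrC(x,x[j])=0$ for all $j<0$. For $(a)\Rightarrow(b)$: Theorem \ref{make module} produces $\upbeta\colon\chamC\to\chamD$ with $\Phi_\upbeta(x)\in\fmod\mathrm{A}_{\chamD,\con}$, which translates to $\Flop_\upbeta(x)$ belonging to perverse sheaves on $\scrX^+_\chamD$. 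Part \eqref{cor: flop main 2} is the same argument with Theorem \ref{make module} replaced by \ref{cor 2 3folds}\eqref{cor 2 3folds 2}: the stronger hypothesis there yields $\upgamma\colon\chamC\to\mathsf{E}$ and $i$ with $\Phi_\upgamma(x)\cong\scrS_{i,\mathsf{E}}$, that is $\Flop_\upgamma(x)\cong\scrO_{\Curve_i}(-1)$ on $\scrX^+_{\mathsf{E}}$; conversely $\scrO_{\Curve_i}(-1)\leftrightarrow\scrS_{i,\mathsf{E}}$ is a brick lying in a heart, so transporting its properties back across $\Phi_\upgamma$ recovers $\Hom_\scrC(x,x[j])=0$ for $j<0$ together with $\Hom_\scrC(x,x)\cong\bC$.

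For the final \emph{furthermore} clause, let $\scrH$ be the heart of a bounded t-structure on $\scrC$; identifying $\scrC$ with $\scrC_\chamC$, Theorem \ref{cor: no exotic} yields $\upbeta\colon\chamC\to\chamD$ such that $\Phi_\upbeta(\scrH)$ equals the standard heart $\fmod\Lambda_{\chamD,\con}$ on $\scrC_\chamD$. Applying $\Phi_\upbeta^{-1}=\Phi_{\upbeta^{-1}}$ and translating through the dictionary, $\scrH$ is the image, under the corresponding composition of flop functors, of $\fmod\Lambda_{\chamD,\con}$, the module category of the contraction algebra of $\scrX^+_\chamD\to\Spec\scrR$; and $\scrX^+_\chamD$ is obtained from $\scrX\to\Spec\scrR$ by iterated flop. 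This is exactly the assertion, and also recovers $\cStab{}\scrC=\Stab\scrC$ in the usual way, although that is not needed here.

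The only genuine work is in assembling the dictionary, and the single load-bearing input is the identification of the chambers of $\scrH$ with minimal models reached by iterated flop, together with the compatibility of $\Uppsi_{\scrX^+_\chamD}$ with the perverse t-structure and with the sheaves $\scrO_{\Curve_i}(-1)$; these are not new and are drawn from \cite{HomMMP, IW9, DW3} and \cite{VdB}. The one spot inviting a sign-like error is the inverse-convention bookkeeping between $\Phi_i$ and $\Flop_i$, but it is harmless because the groupoid is symmetric: I would record the translation once, at the level of the subgroup (resp.\ sub-groupoid) generated by the $\Flop_i$ versus that generated by the $\Phi_i$, and then use it uniformly. Everything else — transporting Hom-vanishings, the brick condition, and membership in a heart across derived equivalences — is routine.
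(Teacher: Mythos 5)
Your proposal is correct and follows essentially the same route as the paper: transport everything across the tilting equivalence and the commutative diagram identifying $\Phi_i$ with $\Flop_i^{-1}$ (the paper cites \cite[4.2]{HomMMP} for this), then quote \ref{make module} for (1), \ref{cor 2 3folds}\eqref{cor 2 3folds 2} for (2) using $\scrS_i\leftrightarrow\scrO_{\Curve_i}(-1)$, and \ref{cor: no exotic} for the final clause. The only difference is that you spell out the trivial converse directions and the inverse-convention bookkeeping, which the paper leaves implicit.
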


\begin{proof}
Under the derived equivalence $\Psi_\scrX$ in \eqref{NCCR Db}, the geometric category $\scrC$ defined in \eqref{eqn: define C intro} corresponds to the $\scrC_\chamC$ in \S\ref{sec: bij MMP}.  Furthermore by \cite[4.2]{HomMMP}, the mutation functors are functorially isomorphic to inverse of the flop functor, via the following commutative diagram.
\[
\begin{tikzpicture}
\node (A) at (0,0) {$\Db(\coh\scrX)$};
\node (B) at (3,0) {$\Db(\fmod\Lambda)$};
\node (a) at (0,-1.5) {$\Db(\coh\scrX^+_i)$};
\node (b) at (3,-1.5) {$\Db(\fmod\upnu_i\Lambda)$};
\draw[->] (A)--node[above]{$\scriptstyle \Psi_\scrX$}(B);
\draw[->] (a)--node[above]{$\scriptstyle \Psi_{\scrX^+_i}$}(b);
\draw[->] (A)--node[left]{$\scriptstyle \Flop^{-1}_i$}(a);
\draw[->] (B)--node[right]{$\scriptstyle \Phi_i$}(b);
\end{tikzpicture}
\]
Part (1) then follows from \ref{make module}, part (2) by \ref{cor 2 3folds}\eqref{cor 2 3folds 2} since $\scrS_i$ corresponds to $\scrO_{\Curve_i}(-1)$, and the final statement is \ref{cor: no exotic}.
\end{proof}

In terms of applications, fixing a category $\scrC$ it is desirable to know for example the spherical objects under the action of the autoequivalence group. The following generalises \cite[6.12(2)]{SW} to all $3$-fold flops.

\begin{cor}\label{cor: flop main PBr}
Let $\scrX\to\Spec\scrR$ be a $3$-fold flopping contraction, where $\scrX$ is at worst Gorenstein terminal, with contraction algebra $\Lambda_{\con}$. For $x\in\scrC$, the following are equivalent.
\begin{enumerate}
\item\label{cor: flop main PBr 1} $\Hom_\scrC(x,x[i])=0$ for all $i<0$ and $\Hom_\scrC(x,x)\cong\mathbb{C}$ 
\item\label{cor: flop main PBr 2} Under the action of $\PBr$ on $\scrC$, $x$ is in the orbit of the (finitely many) brick $\Lambda_{\con}$-modules, or their shifts by $[1]$.
\end{enumerate}
All such $x$ noncommutatively deform to give a spherical twist autoequivalence.
\end{cor}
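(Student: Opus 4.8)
The plan is to split the equivalence and then handle the addendum by conjugation. For \ref{cor: flop main PBr}\eqref{cor: flop main PBr 2}$\Rightarrow$\eqref{cor: flop main PBr 1} I would only need to observe that $\PBr$ acts by autoequivalences, which preserve both the vanishing of negative self-extensions and the condition $\Hom_\scrC(-,-)\cong\bC$, while a brick $\Lambda_{\con}$-module or its $[1]$-shift obviously satisfies both, a module having no negative self-extensions; finiteness of the set of brick $\Lambda_{\con}$-modules is then a consequence of $\Lambda_{\con}$ being silting discrete \cite{August1}, hence $\uptau$-tilting finite \cite{DIJ}, hence with only finitely many bricks.

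The content is in \eqref{cor: flop main PBr 1}$\Rightarrow$\eqref{cor: flop main PBr 2}, where I would promote the groupoid statement \ref{cor 2 3folds}\eqref{cor 2 3folds 2} to one about the vertex group at $\chamC$. Given $x$ as in \eqref{cor: flop main PBr 1}, \ref{cor 2 3folds}\eqref{cor 2 3folds 2} first provides a Deligne path $\upgamma\colon\chamC\to\mathsf{E}$ and an index $j$ with $\Phi_\upgamma(x)\cong\scrS_{j,\mathsf{E}}$. I would then choose an \emph{atom} $\upalpha\colon\chamC\to\mathsf{E}$ (a shortest positive path, which exists for every $\mathsf{E}$), so that $\upalpha^{-1}\circ\upgamma\colon\chamC\to\chamC$ is a loop and $\Phi_{\upalpha^{-1}\circ\upgamma}=\Phi_\upalpha^{-1}\circ\Phi_\upgamma$ lies in $\PBr$, with
\[
\Phi_{\upalpha^{-1}\circ\upgamma}(x)\cong\Phi_\upalpha^{-1}(\scrS_{j,\mathsf{E}}).
\]
The result then follows from the \emph{transport claim}: for an atom $\upalpha\colon\chamC\to\mathsf{E}$ and a simple $\scrS_{j,\mathsf{E}}$ of the standard heart on $\scrC_\mathsf{E}$, the object $\Phi_\upalpha^{-1}(\scrS_{j,\mathsf{E}})\in\scrC_\chamC$ is a brick $\Lambda_{\chamC,\con}$-module or the shift of one by $[1]$; for then $x$ is the image under $\Phi_{\upalpha^{-1}\circ\upgamma}^{-1}\in\PBr$ of such an object, which is precisely \eqref{cor: flop main PBr 2}.

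The transport claim is the crux, and I would extract it from the August correspondence already used in the proof of \ref{cor 2 3folds}. By \cite[1.1]{August}, together with the commutative square \eqref{Jenny comm diag}, for an atom $\upalpha\colon\chamC\to\mathsf{E}$ the heart $\Phi_\upalpha^{-1}(\fmod\Lambda_{\mathsf{E},\con})$ on $\scrC_\chamC$ is \emph{intermediate}, namely lies between the standard heart $\fmod\Lambda_{\chamC,\con}$ and its shift $(\fmod\Lambda_{\chamC,\con})[1]$. I would then invoke the standard fact that the simples of an intermediate heart are exactly the brick $\Lambda_{\chamC,\con}$-modules together with the $[1]$-shifts of such: the heart arises from a torsion pair $(\mathcal{T},\mathcal{F})$ of $\fmod\Lambda_{\chamC,\con}$, and a short argument with the truncation triangle of a simple forces it to lie in $\mathcal{T}\subseteq\fmod\Lambda_{\chamC,\con}$ or to be the $[1]$-shift of an object of $\mathcal{F}$, in either case a brick. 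Since $\Phi_\upalpha^{-1}$ is an equivalence, $\Phi_\upalpha^{-1}(\scrS_{j,\mathsf{E}})$ is a simple of $\Phi_\upalpha^{-1}(\fmod\Lambda_{\mathsf{E},\con})$, hence of the required form; the surfaces cases go through identically, either via the reduction results of \cite{IW9} or by lifting to a $3$-fold flop as in the proof of \ref{cor 2 3folds}. The difficulty here is conceptual rather than computational: one must recognise that the step from \ref{cor 2 3folds}\eqref{cor 2 3folds 2} to the pure braid orbit is governed precisely by the intermediate-heart description of atoms, and that simples of intermediate hearts are automatically bricks or their $[1]$-shifts.

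For the final sentence, given $x$ as in \eqref{cor: flop main PBr 1} I would apply \ref{cor: flop main}\eqref{cor: flop main 2} to obtain a composition of flop functors $\Flop_\upbeta$ with $\Flop_\upbeta(x)\cong\scrO_{\Curve_i}(-1)$ for a curve $\Curve_i$ on some $\scrX^+\to\Spec\scrR$ obtained from $\scrX$ by iterated flop. By \ref{rem: calibration} the curve $\Curve_i$ carries a twist autoequivalence $T_i$ of the null category of $\scrX^+$ around the noncommutative deformations of $\scrO_{\Curve_i}(-1)$. Then $\Flop_\upbeta^{-1}\circ T_i\circ\Flop_\upbeta$ is an autoequivalence of $\scrC=\scrC_\chamC$, and by functoriality of noncommutative-deformation twists under equivalences \cite{DW3} it is the twist around the noncommutative deformations of $x\cong\Flop_\upbeta^{-1}(\scrO_{\Curve_i}(-1))$. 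Hence every $x$ satisfying \eqref{cor: flop main PBr 1} noncommutatively deforms to a spherical twist autoequivalence, completing the proof.
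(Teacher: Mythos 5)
Your proposal is correct and is essentially the paper's own argument: transport $x$ to a simple via \ref{cor 2 3folds}\eqref{cor 2 3folds 2}/\ref{cor: flop main}\eqref{cor: flop main 2}, compare with the atom joining the same chambers to produce a pure braid element, and use the torsion-pair/intermediate-heart fact that atoms carry simples to brick modules or their shifts, with the final deformation statement obtained by conjugating the twist for $\scrO_{\Curve_i}(-1)$. The only differences are cosmetic: you transport the simple backwards along the atom rather than forwards, and you spell out the torsion-pair step (which the paper cites from \cite[\S5]{HW}) and the easy direction in more detail.
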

\begin{proof}
(1)$\Rightarrow$(2) Consider the $\upbeta\colon \chamD\to\chamC$ from \ref{cor: flop main}\eqref{cor: flop main 2} and also the atom $\upalpha\colon\chamD\to\chamC$.  By torsion pairs (see e.g.\ \cite[\S5]{HW}), each $\Phi_\upalpha(\scrS_i)$ is either a brick module, or the shift $[1]$ applied to a brick module.  Hence applying $\Phi_\upbeta\circ \Phi_\upalpha^{-1}$ to either this brick or its shift shows that $x$ is in the desired orbit.  The reverse direction (2)$\Rightarrow$(1) is trivial.  The final statement follows from \ref{cor: flop main}\eqref{cor: flop main 2} since the sheaves $\scrO_{\Curve_i}(-1)$ satisfy the conclusion \cite{DW1,DW3}.
\end{proof}

\begin{rem}
As demonstrated in the example \cite[6.12(2)]{SW},  it is possible to construct a subset of bricks, for which \ref{cor: flop main PBr}\eqref{cor: flop main PBr 2} can be improved to the statement that $x$ is in the orbit under $\PBr$ of this subset, and their shifts by $[1]$.  In general, it is not clear how to construct a subset of minimal size for which \ref{cor: flop main PBr}\eqref{cor: flop main PBr 2} remains true.
\end{rem}

\subsection{Topological Corollaries}
For any triangulated category $\scrT$ there is an associated complex manifold $\Stab\scrT$, whose points are Bridgeland stability conditions on $\scrT$.
Every exact equivalence $\Phi \colon \scrT \to \scrT'$ induces a homeomorphism  $\Phi_* \colon \Stab\scrT \xrightarrow{\sim} \Stab\scrT'$.

Again let $\scrC = \scrC_\chamC$ be the category defined in \S\ref{sec: bij MMP}, and $\mathrm{U}_{\chamC} \subset \Stab \scrC_\chamC$ be the subset of stability conditions with the standard heart.
Since $\mathrm{U}_{\chamC}\cong\mathbb{H}^{\oplus n}$ where $n$ is the number of simples, which is connected, there exists a connected component $\cStab{}\scrC \subset \Stab\scrC$ containing $\mathrm{U}_{\chamC}$.
For a path $\upbeta \colon \chamD \to \chamC$, 
write $\mathrm{U}_{\upbeta} \subset \Stab \scrC_\chamC$ for the image of $\mathrm{U}_\chamD$ under $(\Phi_{\upbeta})_*$, and further write $\mathsf{Term}(\chamC)$ for all morphisms in the Deligne groupoid ending at $\chamC$.

\begin{cor}\label{cor: stability connected}
The set $\{ \mathrm{U}_{\upbeta} \mid \upbeta \in \mathsf{Term}(\chamC)\}$ covers  $\Stab\scrC$. In particular,  $\cStab{}\scrC=\Stab\scrC$.
\end{cor}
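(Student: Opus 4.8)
The plan is to read the covering statement off \ref{cor: no exotic} essentially for free, and then to obtain $\cStab{}\scrC=\Stab\scrC$ from the standard one-wall gluing argument, bypassing any deformation-theoretic input. First I would establish the cover. Every stability condition $\sigma\in\Stab\scrC=\Stab\scrC_\chamC$ has an associated heart $\scrA_\sigma$, which is the heart of a bounded t-structure on $\scrC_\chamC$. Applying \ref{cor: no exotic} to $\scrA_\sigma$ yields a path $\upbeta\colon\chamC\to\chamD$ for which $\Phi_\upbeta(\scrA_\sigma)$ is the standard heart on $\scrC_\chamD$. The induced homeomorphism $(\Phi_\upbeta)_*\colon\Stab\scrC_\chamC\xrightarrow{\sim}\Stab\scrC_\chamD$ sends $\sigma$ to a stability condition whose heart is $\Phi_\upbeta(\scrA_\sigma)$, so $(\Phi_\upbeta)_*\sigma\in\mathrm{U}_\chamD$, equivalently $\sigma\in\mathrm{U}_{\upbeta^{-1}}$ with $\upbeta^{-1}\colon\chamD\to\chamC$ in $\mathsf{Term}(\chamC)$. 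Since $\sigma$ was arbitrary, $\{\mathrm{U}_\upbeta\mid\upbeta\in\mathsf{Term}(\chamC)\}$ covers $\Stab\scrC$.

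For the remaining assertion I would show that each cell $\mathrm{U}_\upbeta$ with $\upbeta\in\mathsf{Term}(\chamC)$ lies in the connected component $\cStab{}\scrC$. Each such $\mathrm{U}_\upbeta$ is homeomorphic to $\mathbb{H}^{\oplus n}$, hence connected, and $\mathrm{U}_\chamC\subseteq\cStab{}\scrC$ by definition; so it suffices to prove the inductive step that $\mathrm{U}_\upbeta\subseteq\cStab{}\scrC$ forces $\mathrm{U}_{s_i\circ\upbeta}\subseteq\cStab{}\scrC$ for every simple wall crossing $s_i$, and likewise for $s_i^{-1}$, since induction on the length of a word in the simple wall crossings and their inverses representing $\upbeta$ then covers all of $\mathsf{Term}(\chamC)$. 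The inductive step is the classical ``cross one wall in $\Stab$'' lemma: transporting the standard heart on $\scrC_\chamD$ across $s_i$ produces a simple tilt of the standard heart on the adjacent chamber at its vertex simple $\scrS_i$ --- which is what $\Phi_i(\scrS_i)\cong\scrS_i[-1]$ from \ref{actions on some modules} encodes --- and rotating the central charge of $\scrS_i$ through the negative real axis sweeps out a connected path inside $\Stab\scrC$ from the interior of $\mathrm{U}_\upbeta$ to the interior of $\mathrm{U}_{s_i\circ\upbeta}$, exactly as in \cite{IUU} and \cite{BDL}. Combined with the cover, this gives $\Stab\scrC=\bigcup_{\upbeta}\mathrm{U}_\upbeta\subseteq\cStab{}\scrC$, so $\cStab{}\scrC=\Stab\scrC$.

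The hard part will be the one-wall gluing lemma underlying the inductive step, i.e.\ verifying that a stability condition with standard heart on $\scrC_\chamD$ can be continuously deformed, without ever leaving $\Stab\scrC$, to one with standard heart on the chamber adjacent across $s_i$. When $\scrS_i$ is rigid this is the classical central-charge-rotation argument; but in the partial-resolution and $3$-fold settings $\scrS_i$ may carry self-extensions, so instead of a literal rotation one must argue through the torsion pair cut out by $\scrS_i$ and the associated one-parameter family of tilted hearts. This is precisely the ``standard variation on \cite{IUU} and \cite{BDL}'' advertised in the introduction, and I expect it to be the only step requiring more than bookkeeping; everything else here is a formal consequence of \ref{cor: no exotic} together with the general formalism of stability conditions, and in particular no local-homeomorphism or deformation argument for $\Stab\scrC$ is needed.
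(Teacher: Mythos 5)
Your proposal is correct and is essentially the paper's own argument: the covering statement is obtained exactly as you do, by applying \ref{cor: no exotic} to the heart of an arbitrary $\upsigma\in\Stab\scrC$ and pulling back, and the connectedness reduces to the one-wall statement that the standard cells of adjacent chambers glue along a codimension-one boundary. The only real difference is that the paper does not reprove your ``hard part'' but simply cites it (\cite[6.3]{HW}, \cite[5.5]{Bstab}), and packages your induction on word length as the observation that each $(\Phi_i)_*$ therefore restricts to a homeomorphism $\cStab{}\scrC_{\chamC}\to\cStab{}\scrC_{\chamC_i}$, so that $(\Phi_\upbeta)_*$ preserves the distinguished components.
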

\begin{proof}

For a simple mutation $s_i \colon \chamC \to \chamC_i$, the $\mathrm{U}_{\chamC}$ and $\mathrm{U}_{\chamC_i}$ share a codimension one boundary (see e.g.\ \cite[6.3]{HW}, \cite[5.5]{Bstab}).  In particular, the associated homeomorphism $(\Phi_i)_*$ restricts to 
\[ (\Phi_i)_* \colon \cStab{}\scrC_{\chamC} \to \cStab{}\scrC_{\chamC_i}. \]
Let $\upsigma = (Z, \scrA)$ be an arbitrary point in $\Stab\scrC_{\chamC}$.
Then by \ref{cor: no exotic}, there exists a path $\upbeta \colon \chamC \to \chamD$ such that $\Phi_{\upbeta}(\scrA)$ is the standard heart of $\scrC_{\chamD}$.
Now $\Phi_{\upbeta}$ is a composition of simple mutations $\Phi_i$ or their inverses, and thus this also preserves the components $\cStab{}\scrC_{\chamC}$ and $\cStab{}\scrC_{\chamD}$.
Since $(\Phi_{\upbeta})_*(\upsigma) \in \mathrm{U}_\chamD \subset \cStab{}\scrC_{\chamD}$, applying $\Phi_{\upbeta^{-1}}=\Phi_\upbeta^{-1}$ gives $\upsigma \in \mathrm{U}_{\upbeta^{-1}} \cap \,\cStab{}\scrC_{\chamC}$, where $\upbeta^{-1}\colon\chamD\to\chamC$.  Since $\mathrm{U}_{\upbeta^{-1}}$ is connected and $\cStab{}\scrC_{\chamC}$ is a connected component, necessarily $\upsigma \in \mathrm{U}_{\upbeta^{-1}} \subset \cStab{}\scrC_{\chamC}$, which proves the result.
\end{proof}

Consider the group $\Auteq^{\mathrm{FM}}\scrC$ consisting of those $\Upphi|_\scrC$ where $\Upphi$ is a  Fourier--Mukai equivalence $\Db(\coh X)\to\Db(\coh X)$ that commutes with $\mathbf{R}f_*$.  Automatically  $\Upphi|_\scrC\colon\scrC\to\scrC$.

\begin{cor}\label{cor: FM char}
Suppose that $\scrX\to\Spec \scrR$ is a $3$-fold flop, where $\scrX$ has at worst terminal singularities.  Then $\Auteq^{\mathrm{FM}}\scrC\cong\PBr\scrC$.
\end{cor}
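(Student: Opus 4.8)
The plan is to establish the two inclusions $\PBr\scrC\subseteq\Auteq^{\mathrm{FM}}\scrC$ and $\Auteq^{\mathrm{FM}}\scrC\subseteq\PBr\scrC$ separately; the first is routine, and the second is where the real work, and the main obstacle, lies.

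For the inclusion $\PBr\scrC\subseteq\Auteq^{\mathrm{FM}}\scrC$ I would argue as follows. By definition $\PBr\scrC$ is the image of the vertex group at $\chamC$, so it is generated by the functors $\Phi_\upgamma$ attached to loops $\upgamma\colon\chamC\to\chamC$ in the Deligne groupoid, each of which is a word in the $\Phi_i^{\pm1}$. By \ref{rem: calibration} and \cite[4.2]{HomMMP}, across the derived equivalence $\Psi_\scrX$ of \eqref{NCCR Db} each $\Phi_i$ is the restriction of the (inverse) Bridgeland--Chen flop functor $\Db(\coh\scrX)\to\Db(\coh\scrX^{+}_{i})$, which is Fourier--Mukai and, by crepancy of the flop, commutes with the pushforwards to $\Spec\scrR$. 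Composing along a loop returns to $\scrX$, so $\Phi_\upgamma$ is the restriction of an FM autoequivalence of $\Db(\coh\scrX)$ commuting with $\mathbf{R}f_*$, hence lies in $\Auteq^{\mathrm{FM}}\scrC$.

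For the reverse inclusion, fix $\Upphi|_\scrC\in\Auteq^{\mathrm{FM}}\scrC$, coming from an FM autoequivalence $\Upphi$ of $\Db(\coh\scrX)$ commuting with $\mathbf{R}f_*$. Then $\Upphi|_\scrC$ sends the standard heart $\fmod\mathrm{A}_{\chamC,\con}$ to the heart $\scrH$ of a bounded t-structure on $\scrC_\chamC$, so by \ref{cor: no exotic} there is a path $\upbeta\colon\chamC\to\chamD$ with $\Phi_\upbeta(\scrH)$ the standard heart of $\scrC_\chamD$; thus $G\colonequals\Phi_\upbeta\circ\Upphi|_\scrC\colon\scrC_\chamC\to\scrC_\chamD$ carries standard heart to standard heart. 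The next step is to promote $G$ to the ambient categories. Here $\Upphi$ is FM and $\scrR$-linear by hypothesis, and $\Phi_\upbeta$, being a word in the flop functors, transports (via $\Psi$) to an FM equivalence of the ambient derived categories commuting with the pushforwards to $\Spec\scrR$; so $G$ lifts to an $\scrR$-linear FM equivalence $\bar G\colon\Db(\coh\scrX)\to\Db(\coh\scrX^{+})$ for $\scrX^{+}\to\Spec\scrR$ the model attached to $\chamD$. Since $\bar G$ preserves the null category $\scrC$ and restricts there to a standard-heart-preserving functor, the recollement description of perverse sheaves forces $\bar G$ to take ${}^{-1}\mathrm{Per}(\scrX/\Spec\scrR)$ to ${}^{-1}\mathrm{Per}(\scrX^{+}/\Spec\scrR)$. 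Through the tilting equivalences of \eqref{NCCR Db} this gives an equivalence of abelian categories $\fmod\Lambda_\chamC\simto\fmod\Lambda_\chamD$, hence an isomorphism $\End_\scrR(M_\chamC)\cong\End_\scrR(M_\chamD)$ of $\scrR$-algebras; as these are basic and each contains $\scrR$ as a summand, $M_\chamC\cong M_\chamD$ in $\CM\scrR$, and therefore $\chamC=\chamD$ by the injectivity of the assignment $\chamC\mapsto M_\chamC$ of \S\ref{sec: bij MMP}. Consequently $\upbeta$ is a loop at $\chamC$, so $\Phi_\upbeta\in\PBr\scrC$, and $G=\Phi_\upbeta\circ\Upphi|_\scrC$ is an autoequivalence of $\scrC_\chamC$ fixing the standard heart; it then suffices to show $G\in\PBr\scrC$, for then $\Upphi|_\scrC=\Phi_\upbeta^{-1}\circ G\in\PBr\scrC$.

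The hard part is this last step: showing that an element of $\Auteq^{\mathrm{FM}}\scrC$ which fixes the standard heart already lies in $\PBr\scrC$. Such a $G$ lifts to an $\scrR$-linear FM autoequivalence of $\Db(\coh\scrX)$ fixing ${}^{-1}\mathrm{Per}(\scrX/\Spec\scrR)$, hence restricts to an $\scrR$-algebra automorphism of $\Lambda_\chamC=\End_\scrR(M_\chamC)$, which modulo inner automorphisms is induced by an automorphism of $\scrX$ over $\Spec\scrR$; I would show this contributes nothing beyond $\PBr\scrC$, using that $\scrR$ is complete local together with the classification of $\scrR$-linear Fourier--Mukai autoequivalences in \cite{DW1, DW3}. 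Note that tensoring by a line bundle of nonzero degree does \emph{not} preserve $\scrC$ (it moves $\scrO_{\Curve_i}(-1)$ out of $\scrC$), which eliminates the Picard contribution, and a shift cannot fix a heart, so the only remaining candidates are genuine automorphisms of the flop, which in the generic case act trivially on $\scrC$. As an alternative to the last step, one can run the argument through \ref{cor: stability connected}: the induced homeomorphism $(\Upphi|_\scrC)_*$ permutes the covering regions $\{\mathrm{U}_\upbeta\}$ of $\Stab\scrC$ compatibly with the Deligne groupoid action, and the stabiliser of $\mathrm{U}_\chamC$ is $\PBr\scrC$ up to this same residual geometric symmetry.
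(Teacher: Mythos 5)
Your easy inclusion $\PBr\scrC\subseteq\Auteq^{\mathrm{FM}}\scrC$ is fine, but the reverse inclusion breaks down at exactly the points where the real content lies. First, the step forcing the path $\upbeta$ to be a loop fails: from an $\scrR$-algebra isomorphism $\End_\scrR(M_\chamC)\cong\End_\scrR(M_\chamD)$ you cannot conclude $M_\chamC\cong M_\chamD$. Already for the Atiyah flop the two chambers carry the non-isomorphic rigid modules $\scrR\oplus M$ and $\scrR\oplus M^{*}$, whose endomorphism algebras are isomorphic, so the conclusion $\chamC=\chamD$ does not follow. Relatedly, your claim that an FM equivalence preserving $\scrC$ and its standard heart must carry the perverse heart of $\Db(\coh\scrX)$ to that of $\Db(\coh\scrX^{+})$ is asserted via ``the recollement description'' but never proved; the perverse heart is not determined by its intersection with the null category. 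Second, and more seriously, the step you yourself flag as the hard part --- that an element of $\Auteq^{\mathrm{FM}}\scrC$ fixing the standard heart already lies in $\PBr\scrC$ --- is not established: \cite{DW1,DW3} construct twist functors but contain no classification of $\scrR$-linear FM autoequivalences, ``in the generic case act trivially'' does not cover the general Gorenstein terminal case the statement asserts, and your alternative route via \ref{cor: stability connected} quietly assumes that the stabiliser of $\mathrm{U}_{\chamC}$ (equivalently, of the component $\cStab{}\scrC$) inside $\Auteq^{\mathrm{FM}}\scrC$ is $\PBr\scrC$ --- which is precisely the nontrivial input you would need to quote, not something you have derived.

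That missing input is exactly how the paper argues: by \cite[7.1]{HW2} the subgroup $\cAut{}\scrC\subseteq\Auteq^{\mathrm{FM}}\scrC$ of FM autoequivalences preserving the distinguished component $\cStab{}\scrC$ is isomorphic to $\PBr\scrC$, and then \ref{cor: stability connected} gives $\cStab{}\scrC=\Stab\scrC$, so every FM autoequivalence preserves the component and hence $\Auteq^{\mathrm{FM}}\scrC=\cAut{}\scrC\cong\PBr\scrC$. So the heart-classification route you start down is not what is needed; what is needed, and what your proposal lacks, is the identification of the component-preserving subgroup with $\PBr\scrC$, a substantial theorem of \cite{HW2} that is not recoverable from \cite{DW1,DW3} or from \ref{cor: no exotic} alone.
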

\begin{proof}
Consider the subgroup $\cAut{}\scrC$ of $\Auteq^{\mathrm{FM}}\scrC$ that preserves $\cStab{}\scrC$.  By  \cite[7.1]{HW2} it is known that $\cAut{}\scrC\cong\PBr\scrC$. But now using \ref{cor: stability connected} $\cStab{}\scrC=\Stab\scrC$, hence every autoequivalence preserves $\cStab{}\scrC$ and thus $\cAut{}\scrC=\Auteq^{\mathrm{FM}}\scrC$.
\end{proof}

\section{Silting Discrete Derived Categories}\label{sec: silting section}

This section generalises the argument in \S\ref{sec: classification} to the setting of derived categories $\Db(\fmod A)$ where $A$ is a silting discrete finite dimensional algebra.  This gives applications to representation theory.

\subsection{Silting Discrete Preliminaries}\label{sec: silting prelims}
Throughout this section, $A$ will be a finite dimensional $k$-algebra, which to be consistent with the earlier parts of this paper requires $k=\mathbb{C}$.  As is standard, this can be easily be generalised to other fields by weakening the condition on the endomorphism ring of a brick to be a division ring, rather than the base field.

\medskip
We briefly recall silting objects and simple minded collections, following \cite{AI, KY}.  Set $\scrK=\Kb(\proj A)$ and $\scrT=\Db(\fmod A)$.  An object $x\in \scrK$ is called silting if $\Hom_{\scrK}(x,x[i])=0$ for all $i>0$, and further $\mathrm{thick}(x)=\scrK$.  By contrast, a collection $\{y_1,\hdots,y_n\}$ of objects of $\scrT$ are called a simple minded collection (smc) if $\Hom_\scrT(y_i,y_j[t])=0$ for all $i,j$ and all $t<0$, $\Hom_\scrT(y_i,y_j)\cong\mathbb{C}\updelta_{ij}$,  and $\mathrm{thick}(y_1,\hdots,y_n)=\scrT$.  We will often blur the distinction between the set $\{y_1,\hdots,y_n\}$ and the single object $y=\bigoplus y_i$.

The standard example of a silting object is $A$, considered as a complex in degree zero, and the standard example of an smc is $\scrS=\bigoplus \scrS_i$, the sum of the simple $A$-modules.

Below, we will often use the Koenig--Yang bijections. For the categories $\scrK$ and $\scrT$ above, there are bijections \cite[\S5]{KY}
\begin{align}
\{ \mbox{basic silting objects in }\scrK\}&\longleftrightarrow \{ \mbox{bounded t-structures in }\scrT\mbox{ with length heart}\}\label{KYbij}\\
&\longleftrightarrow \{ \mbox{smcs in }\scrT\}\nonumber
\end{align}
where an object is \emph{basic} if there are no repetitions in its Krull--Schmidt decomposition.  The following is the standard definition, translated into the notation of \ref{not: 5.1}.

\begin{dfn}
Let $\scrH$ be the heart of a bounded t-structure on a triangulated category $\scrT$.  An smc $\scrU$ is called 2-term with respect to $\scrH$ if $\scrU\in [-1,0]_\scrH$.  In $\Db(A)$, the set of 2-term smcs with respect to the standard heart of $\Db(A)$ will be written $2\dsmc{A}$.  Under \eqref{KYbij}, the corresponding set of silting objects will be written $2\silt A$.
\end{dfn}

\begin{nota}\label{nota: 5.4}
If $\scrV$ is an smc, we will abuse notation in \textnormal{\ref{not: 5.1}} and write $[a,b]_\scrV=[a,b]_{\scrH_\scrV}$ where $\scrH_\scrV$ is the heart of the bounded t-structure corresponding to $\scrV$ under the Koenig--Yang bijection \eqref{KYbij}. Note that with this abuse, $\scrU\in 2\dsmc A$ if and only if $\scrU\in [-1,0]_{\scrS}$, which by \ref{bound for hearts} is equivalent to $\scrS\in[0,1]_\scrU$.  This is the formulation used in \ref{thm: silting main} below.
\end{nota}

As in \ref{notation via vanishings} and \ref{bound for objects}, the notation \ref{nota: 5.4} can be interpreted in terms of the $\Hom$ vanishings.

\begin{lem} \label{notation via vanishings2}
Let $\scrU$ be an smc, and $x\in\scrT = \Db(A)$.
Then the following are equivalent.
\begin{enumerate}
\item $x \in [a,b]_{\scrU}$.
\item $\Hom(\scrU, x[i]) = 0$ for all $i < a$ and $\Hom(x, \scrU[i]) = 0$ for all $i < -b$.
\end{enumerate}
\end{lem}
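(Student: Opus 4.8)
The plan is to deduce this directly from the object-level statement \ref{bound for objects} applied to the heart $\scrA = \scrH_\scrU$. By the very definition of $[a,b]_\scrU$ recorded in \ref{nota: 5.4}, condition (1) here is literally the assertion $x \in [a,b]_{\scrH_\scrU}$, so \ref{bound for objects} already gives that (1) is equivalent to the pair of vanishings $\Hom(\scrH_\scrU, x[i]) = 0$ for all $i < a$ together with $\Hom(x, \scrH_\scrU[i]) = 0$ for all $i < -b$. Thus the only thing left to check is that these vanishings against the whole heart $\scrH_\scrU$ may be replaced by the corresponding vanishings against the smc $\scrU$ alone.

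One direction is immediate: under the Koenig--Yang bijection \eqref{KYbij} the objects of $\scrU$ lie in $\scrH_\scrU$ (indeed they are precisely its simple objects), so vanishing of $\Hom$ against $\scrH_\scrU$ forces vanishing against $\scrU$. For the converse I would use d\'evissage. Since $\scrH_\scrU$ is a finite length abelian category whose simples are exactly $y_1,\hdots,y_n$, every $M \in \scrH_\scrU$ admits a finite filtration with subquotients among the $y_j$. Each short exact sequence $0 \to M' \to M \to M'' \to 0$ in $\scrH_\scrU$ is a triangle in $\scrT$, so applying $\Hom(-,x[i])$ (respectively $\Hom(x,-[i])$) yields a long exact sequence in which the middle term vanishes as soon as the two outer terms do. Inducting on the length of the filtration, with base case $M = y_j$, upgrades the hypothesis $\Hom(\scrU, x[i]) = 0$ (resp. $\Hom(x, \scrU[i]) = 0$) to $\Hom(\scrH_\scrU, x[i]) = 0$ (resp. $\Hom(x, \scrH_\scrU[i]) = 0$) in the relevant ranges of $i$, and the lemma follows.

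There is essentially no obstacle here; the statement is the smc-analogue of \ref{bound for objects}, and granted that identification the proof is a routine d\'evissage. The one point that must be invoked with care is the standard fact, built into the Koenig--Yang correspondence, that the members of an smc $\scrU$ are precisely the simples of the associated length heart $\scrH_\scrU$, since this is exactly what makes the extension-closure argument go through. An alternative that avoids naming $\scrH_\scrU$ at all would be to run the filtration argument of \ref{magic lemma Db 2} directly against the triangles $x_c \to x_{c+1} \to \cdots$ built from the $\scrU$-cohomology of $x$, but reducing to \ref{bound for objects} is cleaner.
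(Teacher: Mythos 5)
Your proposal is correct and matches the paper's intended argument: the lemma is stated without proof precisely because, as you say, it is \ref{bound for objects} applied to the length heart $\scrH_\scrU$ from the Koenig--Yang bijection, combined with the standard d\'evissage using that $\scrH_\scrU$ is the extension closure of the smc $\scrU$ (its simples), so Hom-vanishing against $\scrU$ and against $\scrH_\scrU$ are interchangeable in the relevant degrees.
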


\begin{dfn}\label{def: 5.5}\cite[2.4]{AM}
$A$ is called \emph{$\uptau$-tilting finite} if the set $2\dsmc{A}$, equivalently  $2\silt A$, is finite. If for all smcs $\scrU$ there are only finitely many smcs $\scrV$ for which $\scrV\in [0,1]_{\scrU}$, then $A$ is called \emph{silting discrete}.
\end{dfn}
It is clear that any silting discrete algebra is $\uptau$-tilting finite.

\begin{dfn}\label{dfn: semibrick complex}
A basic $x\in\Db(A)$ is a \emph{semibrick complex} if $\Hom_{\Db(A)}(x,x[j])=0$ for all $j<0$ and further $\Hom_{\Db(A)}(x_i,x_j)\cong\mathbb{C}\updelta_{ij}$ for all indecomposable summands $x_i,x_j$ of $x$.  A \emph{brick complex} is a semibrick complex which is indecomposable.  If $x$ happens to be an $A$-module in homological degree zero, we will emphasise this by calling $x$ a semibrick module, or brick module, respectively.  
\end{dfn}

The following is known \cite{KY, David, IJY}. 
\begin{lem} \label{realisation}
Let $A$ be a silting discrete finite dimensional algebra, and $\scrH$ be the heart of a bounded t-structure of $\Db(A)$. Then the following statements hold.
\begin{enumerate}
\item\label{realisation 1} There exists a finite dimensional algebra $\Gamma$ such that $\scrH\simeq \fmod \Gamma$.
\item\label{realisation 2} The realisation functor $\mathsf{real} \colon\Db(\Gamma) \to \Db(A)$ induces an injective map from the set $2\dsmc{\Gamma}$ to the set of smcs in $\Db(A)$ which are $2$-term with respect to $\scrH$.
\item\label{realisation 3} $\Gamma$ is a $\uptau$-tilting finite algebra.
\end{enumerate}
\end{lem}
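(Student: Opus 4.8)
The plan is to read off the three statements from the Koenig--Yang correspondence \cite{KY}, from the fact that in the silting discrete world every bounded t-structure is algebraic \cite{David}, and from the compatibility of the realisation functor with two-term silting theory \cite{IJY}.

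I would begin with \textnormal{(1)}. Since $A$ is silting discrete it is in particular $\uptau$-tilting finite, so $\fmod A$ has only finitely many torsion pairs \cite{DIJ}. The inductive argument of \cite[Lemma 9]{David} upgrades this to the statement that \emph{every} bounded t-structure on $\Db(A)$ is reached from the standard one by finitely many simple tilts, the point being that the finiteness of torsion pairs propagates to all the intermediate hearts visited along the way (this is the mechanism referred to in the remark following \ref{cor: no exotic}). In particular $\scrH$ is a length heart with finitely many simples, so by the Koenig--Yang bijection \eqref{KYbij} it corresponds to a basic silting object $T\in\Kb(\proj A)$ and $\scrH\simeq\fmod\Gamma$ with $\Gamma\colonequals\End_{\Kb(\proj A)}(T)$; moreover $\mathsf{real}\colon\Db(\Gamma)\to\Db(A)$ is then the realisation functor of the t-structure with heart $\scrH$, using $\Db(\scrH)\simeq\Db(\Gamma)$.

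For \textnormal{(2)}, the facts about $\mathsf{real}$ that I would invoke, which go back to \cite{KY} and are recorded in the form needed in \cite{IJY}, are that it is t-exact from the standard t-structure on $\Db(\Gamma)$ to the t-structure with heart $\scrH$ on $\Db(A)$, and that it is fully faithful on two-term complexes, i.e.\ on complexes with cohomology in two consecutive degrees. A two-term smc $\scrU=\{y_1,\dots,y_m\}$ over $\Gamma$ consists of such complexes, so $\mathsf{real}(\scrU)\subset[-1,0]_\scrH$; the smc relations $\Hom_{\Db(A)}(\mathsf{real} y_i,\mathsf{real} y_j[t])=0$ for all $t<0$ and $\Hom_{\Db(A)}(\mathsf{real} y_i,\mathsf{real} y_j)=\mathbb{C}\updelta_{ij}$ follow, the cases $t\le-2$ being automatic from $[-1,0]_\scrH$ and the cases $t\in\{-1,0\}$ being transported along the fully faithful range; and $\mathrm{thick}(\mathsf{real}\,\scrU)\supseteq\mathrm{thick}(\scrH)=\Db(A)$ since $\scrH$ is a heart. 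Hence $\mathsf{real}(\scrU)$ is an smc of $\Db(A)$ that is two-term with respect to $\scrH$, and since $\mathsf{real}$ is fully faithful on two-term complexes it reflects isomorphisms there, so $\scrU\mapsto\mathsf{real}(\scrU)$ is injective.

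Statement \textnormal{(3)} is then a counting argument. By \textnormal{(2)}, $2\dsmc\Gamma$ injects into the set of smcs of $\Db(A)$ that are two-term with respect to $\scrH$; if $\scrV$ denotes the smc corresponding to $\scrH$, this target set is $\{\scrU\mid\scrU\in[-1,0]_\scrV\}$, which the shift $\scrU\mapsto\scrU[-1]$ identifies with $\{\scrU\mid\scrU\in[0,1]_\scrV\}$, and this last set is finite because $A$ is silting discrete (this is exactly \ref{def: 5.5} applied with the fixed smc taken to be $\scrV$). Therefore $2\dsmc\Gamma$ is finite, i.e.\ $\Gamma$ is $\uptau$-tilting finite. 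The step I expect to be the real obstacle is \textnormal{(1)}, because it requires controlling \emph{all} bounded t-structures rather than only the obviously reachable ones; this is precisely where the bootstrapping inside the \cite[Lemma 9]{David} induction is essential. Granting \textnormal{(1)} and the two-term behaviour of $\mathsf{real}$, parts \textnormal{(2)} and \textnormal{(3)} are routine.
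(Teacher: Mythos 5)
Your proof follows the paper's route for parts (1) and (2) in outline, but diverges genuinely at (3), and has one glossed step in (2). For (1) the paper simply quotes the silting-discrete fact that every bounded heart of $\Db(A)$ has finite length (\cite[Thm A]{David}) and then applies the Koenig--Yang bijection to get $\scrH\simeq\fmod\Gamma$ with $\Gamma=\End_{\Kb(\proj A)}(M)$ for a silting $M$; your sketch of propagating torsion-pair finiteness through the \cite[Lemma 9]{David} induction is essentially a paraphrase of how that cited result is proved, so this is the same argument. For (3) the paper argues differently: it uses the bijection of \cite[0.2]{IJY} between silting objects in $M*M[1]$ and $\stautilt\Gamma$, together with the finiteness of the silting interval $M\geq T\geq M[1]$ coming from silting discreteness. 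Your counting argument --- $2\dsmc\Gamma$ injects by (2) into the smcs of $\Db(A)$ that are $2$-term with respect to $\scrH$, and the shift identifies this set with $\{\scrW\mid\scrW\in[0,1]_{\scrV}\}$ for $\scrV$ the smc of $\scrH$, which is finite by \ref{def: 5.5} --- is correct, avoids \cite{IJY} entirely, and is arguably cleaner once (2) is in place.

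The weak point is (2). What \cite[7.8]{KY} provides, and what the paper uses, is bijectivity of $\Hom_\Gamma(x,y[i])\to\Hom_{\Db(A)}(\mathsf{real}(x),\mathsf{real}(y)[i])$ for \emph{modules} $x,y\in\fmod\Gamma$ and $i\leq 1$. Your stronger assertion that $\mathsf{real}$ is fully faithful on all complexes with cohomology in two consecutive degrees does not follow from this without an additional input (a five-lemma argument needs injectivity of the comparison map on $\Ext^2$), and it is not recorded in \cite{IJY} in the form you invoke. Moreover, even granting that full faithfulness, the $t=-1$ vanishing is not literally ``transported along the fully faithful range'': $y_j[-1]$ has cohomology in degrees $0,1$, so the pair $(y_i,y_j[-1])$ leaves the two-term window, and one needs a further reduction (for instance \ref{magic lemma Db 2}, identifying $\Hom(\mathsf{real}\,y_i,\mathsf{real}\,y_j[-1])$ with $\Hom(\H^0y_i,\H^{-1}y_j)$) before the module-level comparison applies. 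The paper sidesteps both issues by citing \cite[4.11]{BY}: every element of a $2$-term smc over $\Gamma$ lies in $\fmod\Gamma$ or $(\fmod\Gamma)[1]$, so every Hom that occurs is between shifts of modules with shift at most one, where \cite[7.8]{KY} applies verbatim, and injectivity of the induced map follows because $\mathsf{real}$ restricts to equivalences $\fmod\Gamma\to\scrH$ and $(\fmod\Gamma)[1]\to\scrH[1]$. Your argument is repairable along exactly these lines, but as written this step is a gap rather than a citation.
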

\begin{proof}
(1) Since $A$ is silting discrete, any heart of a bounded t-structure has finite length \cite[Thm A]{David}.  Under the Koenig--Yang bijection \cite[6.2]{KY} it follows that $\scrH=\fmod\Gamma$ for some $\Gamma=\End_{\Kb(\proj A)}(M)$, where $M\in\Kb(\proj A)$ is silting.\\
(2) As explained in \cite[7.8]{KY}, the induced map
\[ 
\Hom_{\Gamma}(x, y[i]) \to \Hom_{\Db(A)}(\mathsf{real}(x), \mathsf{real}(y)[i]) 
\]
is bijective for all $x, y \in \fmod \Gamma$ and $i \leq 1$.
Let $\scrU \in 2\dsmc{\Gamma}$, say $\scrU= \{y_1, \dots, y_r\}$.
Then by \cite[4.11]{BY}, each $y_i$ is either in $\fmod \Gamma$ or $(\fmod \Gamma)[1]$. Therefore, setting $z_i\colonequals \mathsf{real}(y_i)$, the direct sum $z = \bigoplus_{i=1}^r z_i$ remains a semibrick complex. Since each $z_i$ is either in $\scrH$ or $\scrH[1]$, clearly $z$ is $2$-term with respect to $\scrH$.  

Since $\scrU$ is an smc, by definition $\mathrm{thick}(\scrU)=\Db(\Gamma)$ and thus $\fmod\Gamma\subset \mathrm{thick}(\scrU)$. Applying the triangulated functor $\mathsf{real}$ then gives
\[
\scrH=\mathsf{real}(\fmod\Gamma)\subset \mathsf{real}(\mathrm{thick}(\scrU))\subseteq \mathrm{thick}(\mathsf{real}(\scrU))\colonequals\mathrm{thick}(z)
\]
Thus $\mathrm{thick}(z)$ contains the heart of a bounded t-structure on $\Db(A)$, and so necessarily $\mathrm{thick}(z)=\Db(A)$.  This shows that $z$ is also an smc, and so $\mathsf{real}$ has induced the claimed map out of $2\dsmc{\Gamma}$.  Since $\mathsf{real}$ is an equivalence on $\fmod\Gamma\to \scrH$, and thus on $(\fmod\Gamma)[1]\to \scrH[1]$, it follows that the induced map is injective.\\
(3) Consider the silting object $M$ in (1), so that $\Gamma=\End_{\Kb(\proj A)}(M)$.  By \cite[0.2]{IJY} there is a bijection between silting objects in $M*M[1]$ and the set of support $\uptau$-tilting modules $\stautilt\Gamma$ (see e.g.\ \cite{AIR}).  But as is standard \cite[\S1.2]{David}, a silting object $T\in M*M[1]$ if and only if $M\geq T\geq M[1]$, and so since $A$ is silting discrete, this set is finite.  Hence the set $\stautilt\Gamma$ is finite.
\end{proof}

The following consequence of \ref{realisation} is also somewhat implicit in the literature.  As notation, write $\sbrick A$ for the set of all (basic) semibrick modules.

\begin{cor}\label{cor: complete if in heart}
Let $A$ be silting discrete, and suppose that $x$ is a semibrick complex in $\Db(A)$ that is contained in the heart $\scrH$ of a bounded t-structure.
Then there exists a simple minded collection $\scrU$ in $\Db(A)$ that contains all indecomposable summands of $x$ and is $2$-term with respect to $\scrH$.
\end{cor}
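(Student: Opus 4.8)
The plan is to transport the problem along the equivalence $\scrH\simeq\fmod\Gamma$ provided by Lemma~\ref{realisation}\eqref{realisation 1}, solve it inside $\Db(\Gamma)$ where $\Gamma$ is $\uptau$-tilting finite, and then push the answer back through the realisation functor $\mathsf{real}$.

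First I would fix the finite dimensional algebra $\Gamma$ with $\scrH\simeq\fmod\Gamma$ coming from Lemma~\ref{realisation}\eqref{realisation 1}, realised concretely by the restriction $\mathsf{real}|_{\fmod\Gamma}\colon\fmod\Gamma\xrightarrow{\sim}\scrH$. Since $x$ lies in $\scrH$, the object $\bar x\colonequals\mathsf{real}^{-1}(x)$ lies in $\fmod\Gamma$, concentrated in degree zero. As $\mathsf{real}|_{\fmod\Gamma}$ is an equivalence it preserves indecomposability and the decomposition of $\bar x$ into indecomposable summands, and being fully faithful it transports the semibrick condition $\Hom_{\Db(A)}(x_i,x_j)\cong\mathbb{C}\updelta_{ij}$ onto the corresponding summands of $\bar x$; the vanishing $\Hom(\bar x,\bar x[j])=0$ for $j<0$ is automatic since $\bar x\in\fmod\Gamma$. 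Hence $\bar x$ is a semibrick module over $\Gamma$.

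Next, Lemma~\ref{realisation}\eqref{realisation 3} gives that $\Gamma$ is $\uptau$-tilting finite, so every torsion class in $\fmod\Gamma$ is functorially finite and consequently $\bar x$ is a left finite semibrick. By Asai's classification of semibricks \cite{Asai}, $\bar x$ is then a subset of a $2$-term simple minded collection $\bar{\scrU}\in 2\dsmc\Gamma$; this is exactly the same input that is used in the proof of \ref{cor 2 3folds}. Finally, Lemma~\ref{realisation}\eqref{realisation 2} sends $\bar{\scrU}$ to a simple minded collection $\scrU\colonequals\mathsf{real}(\bar{\scrU})$ in $\Db(A)$ which is $2$-term with respect to $\scrH$. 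Since $\mathsf{real}$ carries the indecomposable summands of $\bar x$ bijectively to those of $x=\mathsf{real}(\bar x)$, the inclusion $\bar x\subseteq\bar{\scrU}$ yields that every indecomposable summand of $x$ belongs to $\scrU$, which is the required conclusion.

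I expect the only genuinely delicate point to be invoking Asai's theorem in exactly the right form: one must ensure that a left finite semibrick over a $\uptau$-tilting finite algebra sits inside a simple minded collection that is $2$-term with respect to the \emph{standard} heart (so that Lemma~\ref{realisation}\eqref{realisation 2} is applicable), rather than merely inside some arbitrary simple minded collection. Everything else is routine bookkeeping along the equivalence $\scrH\simeq\fmod\Gamma$ already packaged in Lemma~\ref{realisation}.
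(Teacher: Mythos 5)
Your proposal is correct and follows essentially the same route as the paper: transport $x$ along the equivalence $\scrH\simeq\fmod\Gamma$ from \ref{realisation}, invoke Asai's bijection between $2\dsmc\Gamma$ and semibrick modules (valid since $\uptau$-tilting finiteness makes all torsion classes functorially finite, by \cite[1.2]{DIJ}), and push the resulting $2$-term smc back through $\mathsf{real}$ via \ref{realisation}\eqref{realisation 2}. The "delicate point" you flag is handled exactly as you suggest, since Asai's result produces an smc that is $2$-term with respect to the standard heart of $\Gamma$.
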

\begin{proof}
By \ref{realisation}, $\scrH\simeq\fmod\Gamma$ where $\Gamma$ is $\uptau$-tilting finite. Let $y \in \fmod \Gamma$ be the semibrick corresponding to $x\in\scrH$ under the equivalence. Since $\Gamma$ is $\uptau$-tilting finite, every torsion class in $\fmod\Gamma$ is functorially finite \cite[1.2]{DIJ}.  Thus the `functorially finite' hypothesis can be dropped from the statement in Asai \cite[2.3]{Asai}, so that \cite[2.3]{Asai} then asserts there is a bijection
\[
2\dsmc\Gamma\to \sbrick \Gamma
\]
given by $\scrX\to\scrX\cap\fmod\Gamma$.  Thus there exists $\scrU\in 2\dsmc{\Gamma}$ such that $\scrU$ contains all the indecomposable summands of $y$.  It follows from \ref{realisation}\eqref{realisation 2} that $\mathsf{real}(\scrU)$ gives the desired smc.
\end{proof}

The challenge, and the content of the next subsection, is to prove that any semibrick complex in $\Db(A)$ is automatically contained in the heart $\scrH$ of a bounded t-structure.  In fact, this works much more generally.

\subsection{Main Result}
Throughout this section, as above, $A$ is a finite dimensional $\mathbb{C}$-algebra.

\begin{dfn}
If $\scrU, \scrV$ are smcs in $\Db(A)$, write $\scrU\geq \scrV$ if $\Hom(\scrV, \scrU[j]) = 0$ for all $j<0$. 
\end{dfn}

Note that, by \ref{notation via vanishings2}, $\scrU \geq \scrV$ if and only if $\scrV \in [a, 0]_{\scrU}$ for some $a \leq 0$. The relation~$\geq$ endows the set $\textsf{smc}\Db(A)$ with the structure of a partially ordered set \cite[7.9]{KY}.

To prove the main result requires the ability to mutate simple minded collections.

\begin{dfn}[\cite{KY}] 
Let $\scrU = \{ y_1, \dots, y_r\}$ be an smc in the derived category $\Db(A)$ of a finite dimensional algebra $A$, and choose $y_i$ in $\scrU$.
The \textit{left mutation} $\upnu_i\scrU \colonequals \{y'_1, \dots, y'_r\}$ of $\scrS$ at $y_i$ is defined as follows.
\begin{enumerate}
\item If $j \neq i$, then $y'_j$ is defined to be the cone of the minimal left approximation $y_j[-1] \to y_{ij}$ in the extension closure of $y_i$.
\item If $j=i$, then $y'_i \colonequals y_i[1]$
\end{enumerate}
\end{dfn}
By \cite[7.8]{KY}, the new collection $\upnu_i\scrU$ is again an smc.
Note that $\scrU > \upnu_i\scrU$ holds since $\upnu_i\scrU \in [-1,0]_{\scrU}$ by construction.
Right mutations are defined similarly, but below we will only require left mutations.

\begin{lem}\label{lem 6.10}
Set $\scrT=\Db(A)$, let $\scrU=\{ y_1, \dots, y_r\} \subset \scrT$ be an smc, and suppose that $x \in \scrT$ satisfies $x \in [a, b]_{\scrU}$. Then the following statements hold. 
\begin{enumerate}
\item $x \in [a, b+1]_{\upnu_i\scrU}$ for all $i$.
\item If $\Hom_\scrT(\H_{\scrU}^b(x), y_i) = 0$, then $x \in [a, b]_{\upnu_i\scrU}$.
\end{enumerate}
\end{lem}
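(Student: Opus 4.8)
The plan is to deduce both parts from the Hom-vanishing criterion \ref{notation via vanishings2}, using only two structural features of the left mutation $\upnu_i\scrU=\{y'_1,\dots,y'_r\}$.  The first is the bound $\upnu_i\scrU\in[-1,0]_\scrU$ already recorded above.  The second is that, by the definition of left mutation, $y'_i=y_i[1]$, while for $k\neq i$ there is a triangle $y_{ik}\to y'_k\to y_k\to y_{ik}[1]$ in which $y_{ik}$ lies in the extension closure of the brick $y_i$, hence is filtered by finitely many copies of $y_i$.

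For part (1) I would argue by regions rather than by a spectral sequence.  Via \ref{bound for hearts} the bound $\upnu_i\scrU\in[-1,0]_\scrU$ upgrades to $\scrH_\scrU\in[0,1]_{\upnu_i\scrU}$, so every cohomology layer satisfies $\H^k_\scrU(x)[-k]\in[k,k+1]_{\upnu_i\scrU}$.  Since $x\in[a,b]_\scrU$ is an iterated extension of the layers $\H^k_\scrU(x)[-k]$ with $a\leq k\leq b$, each of which now lies in $[k,k+1]_{\upnu_i\scrU}\subseteq[a,b+1]_{\upnu_i\scrU}$, and since the latter region is extension-closed, I conclude $x\in[a,b+1]_{\upnu_i\scrU}$.  (Alternatively one could verify the relevant Hom-vanishings directly against each $y'_k$ using the mutation triangle, mirroring the bookkeeping of part (2).)

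For part (2) I would start from part (1): by \ref{notation via vanishings2} the only vanishing not yet known is $\Hom_\scrT(x,y'_k[-b-1])=0$ for each $k$.  When $k\neq i$ this is formal: applying $\Hom_\scrT(x,-[-b-1])$ to the mutation triangle yields an exact sequence
\[
\Hom_\scrT(x,y_{ik}[-b-1])\to\Hom_\scrT(x,y'_k[-b-1])\to\Hom_\scrT(x,y_k[-b-1]),
\]
in which $\Hom_\scrT(x,y_k[-b-1])=0$ and $\Hom_\scrT(x,y_i[-b-1])=0$ because $x\in[a,b]_\scrU$ and $-b-1<-b$, and an induction along the filtration of $y_{ik}$ by copies of $y_i$ then gives $\Hom_\scrT(x,y_{ik}[-b-1])=0$.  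When $k=i$ the claim reads $\Hom_\scrT(x,y_i[-b])=0$, and here the hypothesis enters: applying $\Hom_\scrT(-,y_i[-b])$ to the truncation triangle $w\to x\to\H^b_\scrU(x)[-b]\to w[1]$ with $w\in[a,b-1]_\scrU$ annihilates both outer terms, since $\Hom_\scrT(\H^b_\scrU(x),y_i)=0$ by assumption and $\Hom_\scrT(w,y_i[-b])=0$ as $-b<-(b-1)$, forcing $\Hom_\scrT(x,y_i[-b])=0$.

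There is no deep obstacle once the two structural facts about $\upnu_i\scrU$ are in hand; the argument is entirely formal.  The points that need care are the bookkeeping of homological shifts — the extra $+1$ in part (1) is precisely the spread a single $\H^k_\scrU$-layer can acquire after mutation — the observation that objects in the extension closure of $y_i$ are finitely filtered by $y_i$ so that the induction in part (2) is legitimate, and the isolation of the one Hom-group in the $k=i$ case of part (2) which is not automatically zero and is killed exactly by the hypothesis $\Hom_\scrT(\H^b_\scrU(x),y_i)=0$.
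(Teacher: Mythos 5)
Your proof is correct, and it shares the paper's skeleton—reduce everything to the Hom-vanishing criterion of \ref{notation via vanishings2} and isolate the single group $\Hom_\scrT(x,y_i[-b])$ that the hypothesis must kill—but both halves are executed by somewhat different means. For (1) the paper verifies the two vanishings directly, using that $y'_j$ lies in the extension closure of $\bigoplus_l y_l$ for $j\neq i$ and that $y'_i=y_i[1]$; you instead flip $\upnu_i\scrU\in[-1,0]_{\scrU}$ to $\scrH_{\scrU}\in[0,1]_{\upnu_i\scrU}$ via \ref{bound for hearts} and propagate the bound along the cohomology filtration of $x$, using that the region $[a,b+1]_{\upnu_i\scrU}$ is extension closed; this is a clean structural alternative to the paper's itemised bookkeeping (and your parenthetical alternative is precisely the paper's argument). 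For (2) the paper notes that its proof of (1) already yields every required vanishing except $\Hom_\scrT(x,y_i[-b])=0$ and then quotes \ref{magic lemma Db 2} to identify this group with $\Hom_\scrT(\H^b_{\scrU}(x),y_i)$; you re-derive the $k\neq i$ vanishings from the mutation triangles together with the finite filtration of $y_{ik}$ by copies of $y_i$ (the same content as the extension-closure remark), and then handle $k=i$ by hand from the truncation triangle $w\to x\to \H^b_{\scrU}(x)[-b]\to w[1]$, which is exactly the degenerate case of \ref{magic lemma Db 2} where one of the two objects lies in the heart. The trade-off is minor: your version of (2) is more self-contained but reproves a special case of a lemma the paper already has; in exchange you avoid any appeal to it, and the degenerate situations ($a=b$, or $\H^b_{\scrU}(x)=0$) pass through your exact-sequence argument without extra comment.
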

\begin{proof}
(1) Fix any $i$ and put $\upnu_i\scrU = \{y'_1, \dots, y'_r\}$.
By \ref{notation via vanishings2}, proving $x \in [a, b+1]_{\upnu_i\scrU}$ is equivalent to showing that
\begin{enumerate}
\item[(a)] $\Hom_\scrT(\bigoplus_{j=1}^r y'_j, x[a+k]) = 0$ for all $k < 0$, and
\item[(b)]  $\Hom_\scrT(x, \bigoplus_{j=1}^r y'_j[-b -1 + k]) = 0$ for all $k < 0$.
\end{enumerate}
Since $y'_j$ is contained in the extension closure of $y = \bigoplus_{l=1}^r y_l$ for all $j \neq i$,
and since $y'_i = y_i[1]$,
the first vanishing (a) follows from the assumption $x \in [a, b]_{\scrU}$.
Again since $y'_j$ is contained in the extension closure of $y$ for all $j \neq i$,
the vanishing $\Hom_\scrT(x, \bigoplus_{j \neq i} y_j[-b + k]) = 0$ holds for any $k < 0$.
For the remaining case, there is an equality
\[ 
\Hom_\scrT(x, y'_i[-b-1+k]) = \Hom_\scrT(x, y_i[-b+k]) 
\]
by definition, and the RHS is zero for all $k < 0$ by the assumption $x \in [a,b]_{\scrU}$.\\
\noindent
(2) The proof of (1) also shows that $x \in [a, b]_{\upnu_i\scrU}$ if and only if $\Hom(x, y_i[-b]) = 0$.
Since $x \in [a,b]_{\scrU}$ and $y$ is an object in the heart corresponding to $\scrU$, applying \ref{magic lemma Db 2} yields an isomorphism $\Hom(x, y_i[-b]) \cong \Hom(\H_{\scrU}^b(x), y_i)$, which proves the result.
\end{proof}

The following is the main result of this section.

\begin{thm}\label{thm: silting main}
If $A$ is silting discrete, $\scrT=\Db(A)$ and $x\in\scrT$, then the following statements are equivalent.
\begin{enumerate}
\item $\Hom_\scrT(x,x[i])=0$ for all $i<0$.
\item $x$ belongs to the heart of a bounded t-structure.
\end{enumerate}
\end{thm}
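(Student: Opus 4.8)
The implication $(2)\Rightarrow(1)$ is immediate: by axiom~\eqref{t1} the heart of a bounded t-structure has no negative self-extensions.

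For $(1)\Rightarrow(2)$ the plan is to transcribe the proof of \ref{make module}, using left mutations of simple minded collections in place of the mutation functors $\Phi_\upalpha$, the lemma \ref{lem 6.10} in place of \ref{lem:atoms}, and \ref{magic lemma Db 2} in place of \ref{magic lemma}. Assume $x\neq 0$; boundedness of the standard t-structure gives $x\in[\![a,b]\!]_{\scrS}$ for some $a\leq b$, and I would induct on the spread $b-a$. If $a=b$ then $x$ is a shifted module, hence lies in a shifted standard heart, so suppose $a<b$.

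Let $\scrV$ be an smc with $x\in[\![a,b]\!]_{\scrV}$ (initially $\scrV=\scrS$). Since $A$ is silting discrete, $\scrH_{\scrV}$ is a length category by \ref{realisation}\eqref{realisation 1}; set $\ell\colonequals\operatorname{length}_{\scrH_{\scrV}}\H^{a}_{\scrV}(x)>0$ and run an inner induction on $\ell$. Pick a simple $T_i\hookrightarrow\H^{a}_{\scrV}(x)$ in $\scrH_{\scrV}$, so $T_i$ is a member of $\scrV$. Applying \ref{magic lemma Db 2} with $y=x$ gives
\[
\Hom_{\scrT}\!\big(\H^{b}_{\scrV}(x),\H^{a}_{\scrV}(x)\big)\cong\Hom_{\scrT}(x,x[a-b])=0,
\]
the last equality because $a-b<0$. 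Left exactness of $\Hom_{\scrT}(\H^{b}_{\scrV}(x),-)$ together with $T_i\hookrightarrow\H^{a}_{\scrV}(x)$ forces $\Hom_{\scrT}(\H^{b}_{\scrV}(x),T_i)=0$, so by the second part of \ref{lem 6.10} the left mutation $\upnu_i\scrV$ satisfies $x\in[a,b]_{\upnu_i\scrV}$; if $\H^{b}_{\upnu_i\scrV}(x)=0$ the spread has already dropped, so assume $x\in[\![a,b]\!]_{\upnu_i\scrV}$. The crux is to check that this mutation strictly decreases the invariant, i.e.\ $\operatorname{length}_{\scrH_{\upnu_i\scrV}}\H^{a}_{\upnu_i\scrV}(x)<\ell$, or $\H^a_{\upnu_i\scrV}(x)=0$. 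This should follow from the vanishing $\H^{a-1}_{\scrV}(x)=0$: passing from $\scrV$ to $\upnu_i\scrV$ tilts $\scrH_{\scrV}$ at the simple $T_i$, moving the chosen copy of $T_i$ from degree $a$ up to degree $a+1$, and with degree $a-1$ empty no new composition factor can enter degree $a$. Granting this, the inner induction on $\ell$ produces an smc $\scrW$ with $x\in[a+1,b]_{\scrW}$, completing the outer step; the outer induction on $b-a$ then places some shift $x[m]$ in a standard heart, i.e.\ $x$ in the heart of a bounded t-structure.

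I expect the one delicate point to be exactly this transformation rule for $\scrV$-cohomology under a left mutation (the ``tilting at a simple'' computation) needed to see that $\ell$ decreases; everything else is formal from \ref{lem 6.10}, \ref{magic lemma Db 2} and the length property \ref{realisation}\eqref{realisation 1}. Note that, unlike in \ref{make module}, silting discreteness is not used to produce a finite poset of mutation targets but only to guarantee that all hearts are length categories, which is precisely what makes the inner induction terminate. As a coda, once $x$ is known to lie in the heart of a bounded t-structure, \ref{cor: complete if in heart} immediately upgrades this to the simple minded collection statement of \ref{cor: silting simples}.
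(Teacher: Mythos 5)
Your reduction (pass to an smc $\scrV$ with $x\in[\![a,b]\!]_{\scrV}$, pick a simple subobject $y_i\hookrightarrow\H^a_{\scrV}(x)$, use \ref{magic lemma Db 2} to get $\Hom(\H^b_{\scrV}(x),y_i)=0$, and invoke \ref{lem 6.10}(2) so that $x\in[a,b]_{\upnu_i\scrV}$) matches the paper's ingredients, but the step you yourself flag as delicate is where the argument breaks, and it cannot be repaired in the form you propose. The invariant $\operatorname{length}_{\scrH_{\scrV}}\H^a_{\scrV}(x)$ is measured in a \emph{changing} heart, and it need not decrease under left mutation. What is true is that $\H^a_{\upnu_i\scrV}(x)$ is the torsion part of $\H^a_{\scrV}(x)$ for the torsion pair $({}^{\perp}y_i,\operatorname{Filt}(y_i))$; but your chosen $y_i$ is a \emph{sub}object of $\H^a_{\scrV}(x)$, so the torsion part may be all of it, and even when it shrinks as an object its length in the tilted heart can stay the same or grow. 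Concretely, for $\Uppi$ of type $A_2$ with simples $S_1,S_2$: if $\H^a_{\scrV}(x)=S_1\oplus S_2$ and you mutate at $S_1$, the new bottom cohomology is $S_2$, which in the tilted heart has the composition series $0\to S_2'\to S_2\to S_1[1]\to 0$ (where $S_2'$ is the new simple with socle $S_1$ and top $S_2$), so the length stays $2$; for $\H^a_{\scrV}(x)=S_2^{\oplus 2}\oplus S_1$ it actually increases from $3$ to $4$. So your inner induction is not well-founded, and your claim that ``no new composition factor can enter degree $a$'' conflates subobjects in the old heart with composition factors in the new one.

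This also undercuts your remark that silting discreteness is only needed to make hearts length categories. The paper uses it for something stronger and essential: it forms the poset $\Updelta=\{\scrW\in\smc\Db(A)\mid \scrU\in[0,1]_{\scrW}\text{ and }x\in[a,b]_{\scrW}\}$, which is \emph{finite} precisely by silting discreteness, takes a \emph{minimal} element $\scrV$, and shows that $\H^a_{\scrV}(x)\neq 0$ would allow a further left mutation $\upnu_i\scrV$ staying in $\Updelta$ (the extra vanishing $\Hom(\H^1_{\scrV}(\scrU),y_i)=0$, again from \ref{magic lemma Db 2}, keeps $\scrU\in[0,1]_{\upnu_i\scrV}$), contradicting minimality. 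In other words, termination is obtained globally from finiteness of the interval of smcs, not locally from a decreasing length; since the theorem is not expected to hold without silting discreteness, no argument that discards this finiteness (as yours does) can succeed. Your mutation step and use of \ref{magic lemma Db 2} are correct as far as they go; what is missing is exactly the finite-poset/minimal-element mechanism.
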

\begin{proof}
The only non-trivial direction is that (1) implies (2).
Choose an smc $\scrU$, then there exists integers $a\leq b$ such that $x \in [\![a,b]\!]_{\scrU}$.
If $a=b$, then there is nothing to prove.
Thus we assume $a<b$, and claim that there is another smc $\scrV$ such that $x \in [a+1, b]_{\scrV}$.
If this is true, then by induction we can find an smc whose corresponding heart contains the object~$x$.

To prove the claim, consider
\[
\Updelta= \{ \scrW \in \smc\Db(A) \mid \scrU \in [0,1]_{\scrW} ~\text{and}~ x \in [a,b]_{\scrW} \}.
\]
This is a poset with respect to $\geq$.  Further, the poset $\Updelta$ is finite since $A$ is silting discrete (see \ref{def: 5.5}), 
and thus there exists a minimal element  $\scrV = \{y_1, \dots, y_r \}$.
We will show that this $\scrV$ satisfies the desired property.
Assume, for the aid of a contradiction, that $\H^a_\scrV(x) \neq 0$, and choose a simple $y_i \hookrightarrow \H_{\scrV}^a(x)$.
By \ref{magic lemma Db 2}, since $x \in [\![a,b]\!]_{\scrV}$ and $\scrU \in [0,1]_{\scrV}$, necessarily
\[ 
\Hom(\scrU, x[a-1]) \cong\Hom(\H_{\scrV}^1(\scrU), \H_{\scrV}^a(x)).
\]
The LHS is zero by \ref{notation via vanishings2} since $x \in [\![a,b]\!]_{\scrU}$.
Similarly, since $a<b$ and $x$ has no negative Exts, it also follows from \ref{magic lemma Db 2} that $\Hom(\H_{\scrV}^b(x), \H_{\scrV}^a(x)) = 0$.
Then our choice of $y_i$ together with these vanishings yields
\[ 
\Hom(\H_{\scrV}^1(\scrU), y_i) = 0 ~ \text{and} ~ \Hom(\H_{\scrV}^b(x), y_i) = 0. 
\]
By \ref{lem 6.10}, these imply that $\scrU \in [0,1]_{\upnu_i\scrV}$ and $x \in [a,b]_{\upnu_i\scrV}$, respectively.
But then $\upnu_i\scrV$ is an smc in $\Updelta$ with $\scrV > \upnu_i\scrV$, which contradicts the minimality of $\scrV$.
\end{proof}

Recall that a semibrick complex $x$ is said to have maximal rank if the number of indecomposable summands of $x$ equals the number of simple $A$-modules.
The following is immediate from \ref{thm: silting main}, where part (3) generalises \cite{DIJ}.

\begin{cor}\label{cor: silting simples}
If $A$ is silting discrete, then the following statements hold.
\begin{enumerate}
\item\label{cor: silting simples 1} Any semibrick complex is a subcollection of a simple minded collection.
\item\label{cor: silting simples 2} Any semibrick complex of maximal rank is a simple minded collection.
\end{enumerate}
\end{cor}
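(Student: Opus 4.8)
The plan is to deduce both parts as essentially formal consequences of Theorem~\ref{thm: silting main} and Corollary~\ref{cor: complete if in heart}: the definition of a semibrick complex already packages exactly the hypothesis $\Hom_{\Db(A)}(x,x[j])=0$ for $j<0$ needed to invoke Theorem~\ref{thm: silting main}(1), and once $x$ is known to live in some heart, Corollary~\ref{cor: complete if in heart} supplies the rest.

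For part~\eqref{cor: silting simples 1} I would argue as follows. Let $x$ be a semibrick complex with indecomposable summands $x_1,\dots,x_k$. By Definition~\ref{dfn: semibrick complex}, $\Hom_\scrT(x,x[j])=0$ for all $j<0$, so Theorem~\ref{thm: silting main} shows that $x$ belongs to the heart $\scrH$ of some bounded t-structure on $\scrT=\Db(A)$; since $\scrH$ is closed under direct summands, each $x_i\in\scrH$. Then $x$ is a semibrick complex contained in $\scrH$, so Corollary~\ref{cor: complete if in heart} produces a simple minded collection $\scrU$ of $\Db(A)$ containing all of $x_1,\dots,x_k$. Thus $\{x_1,\dots,x_k\}$ is a subcollection of the smc $\scrU$.

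For part~\eqref{cor: silting simples 2} I would add the observation that every simple minded collection of $\Db(A)$ has exactly $n$ elements, where $n$ is the number of simple $A$-modules: via the Koenig--Yang bijection~\eqref{KYbij} an smc corresponds to a basic silting object of $\Kb(\proj A)$, which has exactly $n$ indecomposable summands (equivalently, the classes of the members of an smc form a $\mathbb{Z}$-basis of $K_0(\Db(A))\cong\mathbb{Z}^n$). Hence if $x$ has maximal rank, i.e.\ $k=n$, the subcollection $\{x_1,\dots,x_n\}\subseteq\scrU$ from part~\eqref{cor: silting simples 1} already exhausts $\scrU$, so $x$ is itself a simple minded collection.

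I do not expect a genuine obstacle here: all of the real content has been front-loaded into Theorem~\ref{thm: silting main} (and, behind it, into Lemma~\ref{magic lemma Db 2}, Lemma~\ref{lem 6.10} and the silting-discreteness hypothesis), so this corollary is just a repackaging of those results. The only mildly non-formal ingredient is the cardinality statement for simple minded collections, which I would cite from~\cite{KY} rather than reprove.
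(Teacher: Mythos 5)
Your proposal is correct and follows essentially the same route as the paper: part (1) is Theorem \ref{thm: silting main} combined with Corollary \ref{cor: complete if in heart}, and part (2) follows because every simple minded collection has maximal rank, which the paper cites from \cite[5.5]{KY} just as you do (your extra justification via the Koenig--Yang bijection and $K_0$ is fine but not needed beyond the citation).
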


\begin{proof}
(1) Let $x$ be a semibrick complex.
Then \ref{thm: silting main} implies that there exists the bounded heart $\scrH$ that contains $x$, so the result is a special case of \ref{cor: complete if in heart}.\\
\noindent
(2) Let $x$ be a semibrick complex of maximal rank.
By (1), $x$ is a subcollection of a simple minded collection.  But by \cite[5.5]{KY} every simple minded collection has maximal rank, thus  $x$ itself is a simple minded collection.
\end{proof}

For the next corollary, recall that $\sbrick A$ is the set of all semibrick modules. If $x,y\in\sbrick A$, then $x\oplus y[1]$ is called a semibrick pair if $\Hom_A(x,y)=0=\Ext^1_A(x,y)$.
Note that automatically a semibrick pair $x \oplus y[1]$ is a semibrick complex.

Part (3) of the following answers a general question of \cite{BH, HI}.
\begin{cor}\label{cor: semi of maximal rank}
If $A$ is silting discrete, then the following statements hold.
\begin{enumerate}
\item\label{cor: semi of maximal rank 1} Every semibrick pair is a semibrick in the heart $\scrH$ of a bounded t-structure satisfying $\scrH\in [-1,0]$ with respect to the standard t-structure.
\item\label{cor: semi of maximal rank 2} Every semibrick pair is a subset of the simples in the heart $\scrA$ of a bounded t-structure satisfying $\scrA\in [-2,0]$ with respect to the standard t-structure.
\item\label{cor: semi of maximal rank 3}   Every semibrick pair of maximal rank is a $2$-term simple minded collection.
\end{enumerate}
\end{cor}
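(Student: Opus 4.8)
The plan is to derive all three parts by feeding the given semibrick pair $z\colonequals x\oplus y[1]$ into Theorem~\ref{thm: silting main}, Corollary~\ref{cor: complete if in heart} and Corollary~\ref{cor: silting simples}, while keeping careful track of where each heart sits relative to the standard heart $\fmod A$ (equivalently, relative to the standard smc $\scrS$). Recall that $z$ is automatically a semibrick complex, so $\Hom_{\Db(A)}(z,z[j])=0$ for all $j<0$, and note that the indecomposable summands of $z$ are either summands of the module $x$, living in cohomological degree $0$, or shifts by $[1]$ of summands of the module $y$, living in degree $-1$; hence $z\in[-1,0]_\scrS$ tautologically. Part \eqref{cor: semi of maximal rank 3} is then immediate: if $z$ has maximal rank then Corollary~\ref{cor: silting simples}\eqref{cor: silting simples 2} shows $z$ is an smc, and since $z\in[-1,0]_\scrS$ it is a $2$-term smc by the reformulation recorded in \ref{nota: 5.4}.

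For part \eqref{cor: semi of maximal rank 1}, the degenerate cases $x=0$ and $y=0$ are handled directly by $\scrH=(\fmod A)[1]$ and $\scrH=\fmod A$, so I may assume $x,y\neq 0$, whence $z\in[\![-1,0]\!]_\scrS$. I would then revisit the proof of Theorem~\ref{thm: silting main}, run with the starting smc $\scrS$ and the starting interval $a=-1$, $b=0$: since $b-a=1$, a single mutation step of that argument produces an smc $\scrV$, taken minimal in the finite poset $\{\scrW\in\smc\Db(A)\mid \scrS\in[0,1]_\scrW\text{ and }z\in[-1,0]_\scrW\}$, and satisfying $z\in[0,0]_\scrV$. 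Thus $z$ lies in the heart $\scrH\colonequals\scrH_\scrV$; being a semibrick complex contained in a heart, it is a semibrick in $\scrH$. Finally $\scrS\in[0,1]_\scrV$ translates, by \ref{bound for hearts}, into $\scrH\in[-1,0]_\scrS$, which is exactly \eqref{cor: semi of maximal rank 1}.

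For part \eqref{cor: semi of maximal rank 2}, I would apply Corollary~\ref{cor: complete if in heart} to the heart $\scrH\in[-1,0]_\scrS$ just produced: since $z$ is a semibrick complex contained in $\scrH$, there is an smc $\scrW$ containing every indecomposable summand of $z$ which is $2$-term with respect to $\scrH$, i.e.\ $\scrW\in[-1,0]_\scrH$. Set $\scrA\colonequals\scrH_\scrW$; then the summands of $z$ are among the simples of $\scrA$, and as the interval $[-1,0]_\scrH$ is extension-closed, $\scrA\in[-1,0]_\scrH$. It remains to compose the bounds $\scrA\in[-1,0]_\scrH$ and $\scrH\in[-1,0]_\scrS$: for any $w\in\scrA$ the canonical triangle $\H^{-1}_\scrH(w)[1]\to w\to\H^0_\scrH(w)\to$, together with $\H^0_\scrH(w)\in\scrH\subseteq[-1,0]_\scrS$ and $\H^{-1}_\scrH(w)[1]\in[-2,-1]_\scrS$, forces (via the long exact sequence in $\scrS$-cohomology) $\H^i_\scrS(w)=0$ unless $i\in\{-2,-1,0\}$. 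Hence $\scrA\in[-2,0]_\scrS$.

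The one place requiring genuine care is the bookkeeping in part \eqref{cor: semi of maximal rank 1}, namely checking that a single mutation step of the Theorem~\ref{thm: silting main} machinery, run with the particular interval $[-1,0]$, simultaneously places $z$ in cohomological degree $0$ (so that it lands in a heart after just one step) and keeps the new heart inside $[-1,0]$ of the standard one; both assertions come directly from reading off the conditions defining the relevant poset in that proof, but they need to be stated explicitly rather than merely quoted. Everything else is formal.
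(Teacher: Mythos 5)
Your proposal is correct and follows essentially the same route as the paper: for (1) you take a minimal element of the same finite poset $\{\scrW\in\smc\Db(A)\mid \scrS\in[0,1]_\scrW,\ z\in[-1,0]_\scrW\}$ and invoke one step of the proof of \ref{thm: silting main}, for (2) you apply \ref{cor: complete if in heart} and compose the bounds $\scrA\in[-1,0]_\scrH$, $\scrH\in[-1,0]_\scrS$ via the $2$-term filtration exactly as the paper does. The only (harmless) deviations are your explicit treatment of the degenerate cases $x=0$ or $y=0$ and your direct proof of (3) from \ref{cor: silting simples}\eqref{cor: silting simples 2} plus the tautological bound $x\oplus y[1]\in[-1,0]_\scrS$, rather than routing through part (1) as the paper does.
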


\begin{proof}
Let $x \oplus y[1]$ be a semibrick pair.\\
\noindent
(1) Let  $\scrS$ be the smc consisting of simple $A$-modules, so that the heart corresponding to $\scrS$ is the standard heart.
Consider the finite partially ordered set
\[ \Updelta= \{ \scrW \in \smc\Db(A) \mid \scrS \in [0,1]_{\scrW} ~\text{and}~ x \oplus y[1] \in [-1,0]_{\scrW} \}. \]
Then the proof of \ref{thm: silting main} shows that a minimal element $\scrU$ of $\Updelta$ satisfies $x \in [0,0]_{\scrU} = \scrH_{\scrU}$,
where $\scrH_{\scrU}$ is the heart of the corresponding bounded t-structure.
Since $\scrS \in [0,1]_{\scrH_{\scrU}}$ by definition, \ref{nota: 5.4} shows that $\scrH_\scrU\in [-1,0]$ with respect to the standard t-structure.\\
\noindent
(2) Let $\scrH$ be the heart of the t-structure in (1).
Then \ref{cor: complete if in heart} allows us to find the heart $\scrA$ of another t-structure such that $\scrA \in [-1,0]_{\scrH}$,
and the smc corresponding to $\scrA$ contains all indecomposable summands of $x \oplus y[1]$.
Each $a\in[-1,0]_{\scrH}$ and so has a 2-term filtration as in \eqref{triangles for y}, thus applying  $\H^*$ with respect to the standard t-structure and using $\scrH \in [-1,0]$, it is easy to see that $\scrA\in [-2,0]$.\\
\noindent
(3) This follows from (1) and \ref{cor: silting simples}.
\end{proof}

\subsection{Special Case: Contraction Algebras}\label{subsec contraction cor}
A large class of silting discrete algebras are the contraction algebras $\Lambda_{\con}$ from \S\ref{subsec: cont alg}.  The associated category $\Db(\Lambda_{\con})$ is of particular interest, since it conjecturally is the classifying object of smooth flops.  As such, $\Db(\Lambda_{\con})$ should not exhibit any exotic behaviour, nor contain any `unexpected' objects.  The following result confirms this, adding to the existing evidence in \cite{August}.

In what follows, $F_\upbeta$ are compositions of the standard equivalences in \eqref{Jenny comm diag}, which fixing the same start and end point, generate a subgroup $\PBr$ of $\Auteq\Db(\Lambda_{\con})$.
\begin{cor}\label{cor: semibrick for cont alg}
Let $\Lambda_{\con}$ be the contraction algebra associated to a $3$-fold flopping contraction $\scrX\to\Spec\scrR$, where $\scrX$ has at worst Gorenstein terminal singularities, and let $x\in\Db(\Lambda_{\con})$.  Then the following statements hold.
\begin{enumerate}
\item If $x$ is a semibrick complex, then there exists $\upbeta$, and a sum of simple modules $\bigoplus_{i\in\scrI}\scrS_i$ on a possibly different contraction algebra $\Upgamma_{\con}$, such that $F_\upbeta(\bigoplus_{i\in\scrI}\scrS_i)\cong x$.
\item Every semibrick complex in $\Db(\Lambda_{\con})$ is in the orbit, under the action of $\PBr$, of the (finitely many) semibrick modules or their shift by $[1]$.
\end{enumerate}
\end{cor}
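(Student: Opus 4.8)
The plan is to derive both parts from Theorem~\ref{thm: silting main}, the completion statement \ref{cor: complete if in heart}, and the classification of simple minded collections of $\Db(\Lambda_{\con})$; part~(2) will then be the ``vertex group'' shadow of part~(1). For (1): since $\Lambda_{\con}$ is silting discrete \cite{August1}, Theorem~\ref{thm: silting main} applies to the semibrick complex $x$, placing it in the heart $\scrH$ of a bounded t-structure on $\Db(\Lambda_{\con})$, and by \ref{cor: complete if in heart} the indecomposable summands of $x$ extend to a simple minded collection $\scrU$ of $\Db(\Lambda_{\con})$. The point is then that every such $\scrU$ is a groupoid translate of the standard smc of some contraction algebra: silting-discreteness forces silting-connectedness \cite{AI}, so under the Koenig--Yang bijections \cite{KY} the silting object underlying $\scrU$ is an iterated irreducible silting mutation of $\Lambda_{\con}$, and by \cite{August} each such mutation is one of the wall-crossing equivalences $F_i$ of \eqref{Jenny comm diag}. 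Hence $\scrU\cong F_\upbeta(\scrS_{\Upgamma_{\con}})$ for a path $\upbeta\colon\chamD\to\chamC$ and the contraction algebra $\Upgamma_{\con}$ sitting at the chamber $\chamD$; writing $\scrI$ for the indices of the summands of $x$, additivity of $F_\upbeta$ gives $x\cong F_\upbeta(\bigoplus_{i\in\scrI}\scrS_i)$, which is (1).

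For (2), start from the path $\upbeta\colon\chamD\to\chamC$ just produced and let $\upalpha\colon\chamD\to\chamC$ be the unique atom with the same endpoints. Then $\upbeta\circ\upalpha^{-1}$ is a loop based at $\chamC$, so $F_{\upbeta\upalpha^{-1}}\in\PBr$ and $x\cong F_{\upbeta\upalpha^{-1}}\bigl(F_\upalpha(\bigoplus_{i\in\scrI}\scrS_i)\bigr)$. Exactly as in the proof of \ref{cor: flop main PBr}, torsion-pair theory \cite[\S5]{HW} shows that, $\upalpha$ being an atom, each $F_\upalpha(\scrS_i)$ is either a brick $\Lambda_{\con}$-module or the shift by $[1]$ of one, so $F_\upalpha(\bigoplus_{i\in\scrI}\scrS_i)$ is a direct sum of brick modules together with shifted brick modules. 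A short check from the semibrick-complex axioms (a $\Hom$ from a degree-zero summand to a degree-one summand computes an $\Ext^1$, which the axioms force to vanish) shows this sum is a semibrick pair $M\oplus N[1]$ with $M,N\in\sbrick\Lambda_{\con}$; this is the sense in which the list of ``semibrick modules or their shift by $[1]$'' is to be read, and the list is finite because $\Lambda_{\con}$ is $\uptau$-tilting finite. Combined with $F_{\upbeta\upalpha^{-1}}\in\PBr$ this gives (2).

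The step I expect to be the main obstacle is the smc classification used in (1): the excerpt invokes it (inside the proof of \ref{cor 2 3folds}) only in the restricted form describing intermediate hearts between $\fmod\Lambda_{\con}$ and $\fmod\Lambda_{\con}[1]$, whereas here it is needed for an arbitrary simple minded collection of $\Db(\Lambda_{\con})$. Making this precise means combining \cite{August} (silting mutation $=$ wall crossing for contraction algebras) with silting-connectedness of silting discrete algebras \cite{AI} and the fact that all bounded hearts over a silting discrete algebra are length hearts \cite[Thm A]{David}, via \cite{KY}; this is the analogue for $\Db(\Lambda_{\con})$ of \ref{cor: no exotic}. The only other point requiring care, a routine one, is the verification recorded above that the atom-image of a sum of simples is genuinely a semibrick pair, so that the slightly loose phrasing of the list in (2) is justified.
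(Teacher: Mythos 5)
Your proposal is correct and follows essentially the same route as the paper: part (1) is \ref{cor: silting simples}\eqref{cor: silting simples 1} (i.e.\ \ref{thm: silting main} plus \ref{cor: complete if in heart}) combined with the fact that every heart of a bounded t-structure on $\Db(\Lambda_{\con})$ has the form $F_\upbeta(\fmod\Gamma_{\con})$, and part (2) is the same atom/pure-braid manoeuvre as in \ref{cor: flop main PBr}. The classification step you flagged as the main obstacle is handled in the paper simply by citing \cite[3.16]{August1}, which is exactly the statement you re-derive from silting-connectedness, the Koenig--Yang bijections and \cite{August}; your additional check that the atom-image of the simples is a genuine semibrick pair is correct but not required for the paper's phrasing.
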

\begin{proof}
(1) By \ref{cor: silting simples}\eqref{cor: silting simples 1} the object $x$ is a subcollection of a simple minded collection and thus by the KY bijections $x$ is a sum of simples in the heart of a bounded t-structure. But by \cite[3.16]{August1} all such t-structures are of the form $F_\upbeta(\fmod\Gamma_{\con})$ for some $\upbeta$.\\
(2) This follows from (1), in the same way as in \ref{cor: flop main PBr}. Say $\upbeta\colon \chamD\to\chamC$, then consider the atom $\upalpha\colon\chamD\to\chamC$.  By torsion pairs, each $F_\upalpha(\scrS_i)$ is either a brick module, or the shift $[1]$ applied to a brick module.  Hence $F_\upbeta\circ F_\upalpha^{-1}$ shows that $x$ is in the desired orbit.
\end{proof}

\begin{rem}
In the flops setting above, the previous results \ref{cor: flop main} and \ref{cor: flop main PBr} classify the semibrick complexes in the category $\scrC$, whilst \ref{cor: semibrick for cont alg} classifies the semibrick complexes in the category $\Db(\Lambda_{\con})$.   The fact that these results are so similar is all the more remarkable, given the categories $\scrC$ and $\Db(\Lambda_{\con})$ are never equivalent. 

Indeed, it follows from the description of all bounded t-structures in \cite{August} that the category $\Db(\Lambda_{\con})$ enjoys the following property: the heart $\scrH$ of every bounded t-structure on $\Db(\Lambda_{\con})$ satisfies $\Ext^t_{\scrH}(a,b)\cong\Hom_{\Db(\Lambda_{\con})}(a,b[t])$ for all $a,b\in\scrH$ and all $t\in\mathbb{Z}$.  We claim that $\scrC$ does not satisfy this property, and so  $\scrC$ and $\Db(\Lambda_{\con})$ are not equivalent. 

As before, consider the bounded heart $\scrH\colonequals \fmod\Lambda_{\con}$ on $\scrC$.  Writing $\scrS$ for the sum of the simple $\Lambda_{\con}$-modules, then since $\Lambda_{\con}$ is projective,
\[
\Ext^3_{\scrH}(\Lambda_{\con},\scrS)
=
\Ext^3_{\fmod\Lambda_{\con}}(\Lambda_{\con},\scrS)=0.
\]
But as in the setup (\S\ref{sec: bij MMP}), $\scrC\subseteq\Db(\Lambda)$, thus 
\[
\Hom_{\scrC}(\Lambda_{\con},\scrS[3])=
\Hom_{\Db(\Lambda)}(\Lambda_{\con},\scrS[3])=\Ext^3_\Lambda(\Lambda_{\con},\scrS),
\] 
which is non-zero by \cite[4.7]{DW3}, as a consequence of the 3-sCY property.
\end{rem}

\subsection{Further Remarks}
The question of whether a semibrick complex can be completed to a 2-simple minded collection has been addressed elsewhere (e.g.\ \cite{BH, HI}), but the appearance of the interval $[-2,0]$ in \ref{cor: semi of maximal rank} gives some evidence that this is the wrong question.

\begin{exa}\label{ex: no contra}
Consider the projective algebra $\Uppi$ of type $D_4$, which is silting discrete by \cite{AD}, and the explicit semibrick pair $(M\oplus N)\oplus E[1]$ constructed in \cite[4.0.8]{BH}.  As shown in \emph{loc.\ cit.}, this does not complete to a 2-smc.  However by \ref{cor: semi of maximal rank}\eqref{cor: semi of maximal rank 2} it does complete to an smc in the region $[-2,0]$.  This shows that the region $[-2,0]$ cannot be improved to $[-1,0]$.
\end{exa}

In fact, the proof of \ref{cor: semi of maximal rank}\eqref{cor: semi of maximal rank 2} shows that the full strength of silting discrete is not required to answer the  question of whether a semibrick pair completes to an smc.  The following example illustrates this in the case of the preprojective algebra $\Uppi$ of type $A$, where it is still not known whether $\Uppi$ is silting discrete.

\begin{exa}\label{ex: no contra 2}
Consider the preprojective algebra $\Uppi$ of type $A_3$, namely the path algebra of the quiver
\[
\begin{tikzpicture}[>=stealth]
\node (A) at (0,0) [B] {};
\node (B) at (1,0) [B] {};
\node (C) at (2,0) [B] {};
\draw[bend left,->] (A) to node[above]{$\scriptstyle a$}(B);
\draw[bend left,->] (B) to node[below]{$\scriptstyle a^*$}(A);
\draw[bend left,->] (B) to node[above]{$\scriptstyle b$}(C);
\draw[bend left,->] (C) to node[below]{$\scriptstyle b^*$}(B);
\end{tikzpicture}
\]
subject to the relations $aa^*=0$, $a^*a=bb^*$ and $b^*b=0$.  Set $\scrT=\Db(\fmod\Uppi)$ and
\[
x\colonequals
\bigl(
\kern -4pt
\begin{array}{c}
\begin{tikzpicture}[>=stealth]
\node (A) at (0,0)  {$\mathbb{C}$};
\node (B) at (1,0) {$\mathbb{C}$};
\node (C) at (2,0)  {0};
\draw[bend left,->] (A) to node[above]{$\scriptstyle 1$}(B);
\draw[bend left,->] (B) to node[below]{$\scriptstyle 0$}(A);
\draw[bend left,->] (B) to node[above]{$\scriptstyle $}(C);
\draw[bend left,->] (C) to node[below]{$\scriptstyle $}(B);
\end{tikzpicture}
\end{array}
\kern -4pt
\bigr)
\oplus 
\bigl(
\kern -4pt
\begin{array}{c}
\begin{tikzpicture}[>=stealth]
\node (A) at (0,0)  {$\mathbb{C}$};
\node (B) at (1,0) {$\mathbb{C}$};
\node (C) at (2,0)  {$\mathbb{C}$};
\draw[bend left,->] (A) to node[above]{$\scriptstyle 1$}(B);
\draw[bend left,->] (B) to node[below]{$\scriptstyle 0$}(A);
\draw[bend left,->] (B) to node[above]{$\scriptstyle 1$}(C);
\draw[bend left,->] (C) to node[below]{$\scriptstyle 0$}(B);
\end{tikzpicture}
\end{array}
\kern -4pt
\bigr)[1].
\]
As can be checked by hand, using the fact that $\Uppi$ has finite representation type, or by using GAP \cite{QPA}, $x$ satisfies $\Hom_\scrT(x,x[i])=0$ for all $i<0$, and is a semibrick pair.  It does not complete to a 2-smc.  But consider the 2-smc
\[
\scrS'\colonequals
\bigl(
\kern -4pt
\begin{array}{c}
\begin{tikzpicture}[>=stealth]
\node (A) at (0,0)  {$\mathbb{C}$};
\node (B) at (1,0) {$\mathbb{C}$};
\node (C) at (2,0)  {0};
\draw[bend left,->] (A) to node[above]{$\scriptstyle 1$}(B);
\draw[bend left,->] (B) to node[below]{$\scriptstyle 0$}(A);
\draw[bend left,->] (B) to node[above]{$\scriptstyle $}(C);
\draw[bend left,->] (C) to node[below]{$\scriptstyle $}(B);
\end{tikzpicture}
\end{array}
\kern -4pt
\bigr)
\oplus 
\bigl(
\kern -4pt
\begin{array}{c}
\begin{tikzpicture}[>=stealth]
\node (A) at (0,0)  {$\mathbb{C}$};
\node (B) at (1,0) {$0$};
\node (C) at (2,0)  {$0$};
\draw[bend left,->] (A) to node[above]{$\scriptstyle $}(B);
\draw[bend left,->] (B) to node[below]{$\scriptstyle $}(A);
\draw[bend left,->] (B) to node[above]{$\scriptstyle $}(C);
\draw[bend left,->] (C) to node[below]{$\scriptstyle $}(B);
\end{tikzpicture}
\end{array}
\kern -4pt
\bigr)[1]
\oplus 
\bigl(
\kern -4pt
\begin{array}{c}
\begin{tikzpicture}[>=stealth]
\node (A) at (0,0)  {$0$};
\node (B) at (1,0) {$\mathbb{C}$};
\node (C) at (2,0)  {$\mathbb{C}$};
\draw[bend left,->] (A) to node[above]{$\scriptstyle $}(B);
\draw[bend left,->] (B) to node[below]{$\scriptstyle $}(A);
\draw[bend left,->] (B) to node[above]{$\scriptstyle 1$}(C);
\draw[bend left,->] (C) to node[below]{$\scriptstyle 0$}(B);
\end{tikzpicture}
\end{array}
\kern -4pt
\bigr)[1]
\]
In particular, $\scrS'$ is the sum of the simples in a certain heart $\scrH$ of a bounded t-structure, which is intermediate between $\fmod\Uppi$ and $\fmod\Uppi[1]$.   Again, either by hand or by GAP \cite{QPA}, it can be seen that
\[
\Hom_\scrT( \scrS', x[i] ) = 0 = \Hom_\scrT(x, \scrS'[i])
\]
for all $i<0$, and so since every object of $\scrH$ is filtered by $\scrS'$, it follows that
\[
\Hom_\scrT( \scrH, x[i] ) = 0 = \Hom_\scrT(x, \scrH[i])
\]
for all $i<0$.  Since $\scrH$ is the heart of a bounded t-structure, $x\in\scrH$.  Thus, $x$ is a semibrick in the heart of a bounded t-structure which is intermediate between $\fmod\Uppi$ and $\fmod\Uppi[1]$.  Whilst $x$ is not a sum of simples in this heart, if this heart is $\uptau$-tilting finite (this seems to be conjecturally true in the literature), then the proof of \ref{cor: complete if in heart} can be used to show that $x$ is the simples in a different heart $\scrH'$, for which $\scrH'\in[-2,0]$.
\end{exa}

\end{document}